\newtheorem{lem}{Lemma}[section]
\newtheorem{definition}[lem]{Definition}
\newtheorem{cor}[lem]{Corollary}
\newtheorem{thm}[lem]{Theorem}
\newtheorem*{thmA}{Theorem A}
\newtheorem*{thmB}{Theorem B}
\newtheorem*{thmC}{Theorem C}
\newtheorem*{thmD}{Theorem D}
\newtheorem{prop}[lem]{Proposition}
\theoremstyle{remark}
\newtheorem{rem}[lem]{Remark}
\DeclareMathOperator{\im}{im}
\newcommand{\tr}{\operatorname*{tr}}
\newcommand{\GL}{\mathrm{GL}}
\begin{document}
\title[The Langlands-Kottwitz approach for some Shimura Varieties]{The Langlands-Kottwitz approach for some simple Shimura Varieties}
\author{Peter Scholze}
\begin{abstract}
We show how the Langlands-Kottwitz method can be used to determine the semisimple local factors of the Hasse-Weil zeta-function of certain Shimura varieties. On the way, we prove a conjecture of Haines and Kottwitz in this special case.
\end{abstract}

\date{\today}
\maketitle
\tableofcontents
\pagebreak

\section{Introduction}

The aim of this paper is to extend the method used in \cite{Scholze} for determining the zeta-function of the modular curve to the case of the unitary group Shimura varieties with signature $(1,n-1)$, considered e.g. in the book of Harris-Taylor, \cite{HarrisTaylor}. These are proper Shimura varieties for which endoscopy does not occur and for which particularly nice integral models exist by the theory of Drinfeld level structures.

Our first main result is of a general geometric nature. Let $\mathcal{O}$ be the ring of integers in a local field $K$. Assume $X / \mathcal{O}$ is a separated scheme of finite type. Then we denote by $X_{\eta^{\mathrm{ur}}}$ the base-change of $X$ to the maximal unramified extension $K^{\mathrm{ur}}$ of $K$ and by $X_{\mathcal{O}^{\mathrm{ur}}}$ the base-change to the ring of integers in $K^{\mathrm{ur}}$. Also let $X_{\overline{s}}$ be the geometric special fibre. We get maps $\iota: X_{\overline{s}}\longrightarrow X_{\mathcal{O}^{\mathrm{ur}}}$ and $j: X_{\eta^{\mathrm{ur}}}\longrightarrow X_{\mathcal{O}^{\mathrm{ur}}}$.

Now assume that $X$ is regular and flat of relative dimension $n$ over $\mathcal{O}$ and that the special fibre $X_s$ possesses a stratification $X_s = \bigcup_i \mathring{Z}_i$ into locally closed strata $\mathring{Z}_i$ whose closures $Z_i$ are regular. We call the $Z_i$ special fibre components. Let $c(Z)$ be the codimension of $Z$ in $X$, for any special fibre component $Z$ in $X$.

To any special fibre component $Z$, we associate a $\overline{\mathbb{Q}}_{\ell}$-vector space $W_Z$ together with maps $W_Z\longrightarrow W_{Z^{\prime}}$ whenever $c(Z)=c(Z^{\prime})+1$, by induction on $c(Z)$. If $c(Z)=1$, we simply set $W_Z=\overline{\mathbb{Q}}_{\ell}$. In general, we define $W_Z$ to be the kernel of the map
\[
\bigoplus_{\substack{Z\subset Z^{\prime}\\ c(Z^{\prime})=c(Z)-1}} W_{Z^{\prime}} \longrightarrow \bigoplus_{\substack{Z\subset Z^{\prime}\\ c(Z^{\prime})=c(Z)-2}} W_{Z^{\prime}}\ ,
\]
where in the case $c(Z)=2$, the target has to be replaced by $\overline{\mathbb{Q}}_{\ell}$. We make the following assumption:
\vspace{0.1in}

($\ast$) The sequence
\[
0\longrightarrow W_Z\longrightarrow \ldots \longrightarrow \bigoplus_{\substack{Z\subset Z^{\prime}\\ c(Z^{\prime})=i}} W_{Z^{\prime}} \longrightarrow \bigoplus_{\substack{Z\subset Z^{\prime}\\ c(Z^{\prime})=i-1}} W_{Z^{\prime}} \longrightarrow \ldots \longrightarrow \overline{\mathbb{Q}}_{\ell}\longrightarrow 0
\]
is exact for all special fibre components $Z$.

\begin{thmA} Assume that the condition ($\ast$) holds. Then for any $x\in X_s(\mathbb{F}_q)$ with geometric point $\overline{x}$ over $x$, there are canonical isomorphisms
\[
(\iota^{\ast}R^kj_{\ast} \overline{\mathbb{Q}}_{\ell})_{\overline{x}}\cong \bigoplus_{\substack{Z, c(Z)=k\\x\in Z}} W_Z(-k)
\]
for all $k$.
\end{thmA}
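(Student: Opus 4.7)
The plan is to compute $(\iota^{\ast}R^{k}j_{\ast}\overline{\mathbb{Q}}_\ell)_{\overline{x}}$ \'etale-locally, by exhibiting $\iota^{\ast}Rj_\ast\overline{\mathbb{Q}}_\ell$ as the total cohomology of an explicit Gysin-type complex built from Gabber's absolute purity, and then reading off the stalk-wise formula from the exactness hypothesis $(\ast)$.

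\emph{Step 1 (\'etale localization).} Every ingredient in the statement is \'etale-local around $\overline{x}$, so I would replace $X_{\mathcal{O}^{\mathrm{ur}}}$ by its strict henselization $X^{h}$ at $\overline{x}$. Under this reduction $(\iota^{\ast}R^{k}j_\ast\overline{\mathbb{Q}}_\ell)_{\overline{x}}$ becomes the \'etale cohomology of the generic fibre of $X^{h}$, and only those special fibre components $Z$ with $\overline{x}\in Z$ survive; all of these remain regular.

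\emph{Step 2 (Gysin complex).} By Gabber's absolute purity applied to each regular closed immersion $i_{Z}\colon Z\hookrightarrow X^{h}$, one has $i_{Z}^{!}\overline{\mathbb{Q}}_\ell\simeq\overline{\mathbb{Q}}_{\ell,Z}(-c(Z))[-2c(Z)]$. Iterating the associated localization triangles for the nested closed subsets $X^{h}_{s}\supset\bigcup_{c(Z)\ge 2}Z\supset\bigcup_{c(Z)\ge 3}Z\supset\cdots$, I would construct a complex of sheaves on $X^{h}_{s}$
\[
\mathcal{G}^{\bullet}\colon\quad \bigoplus_{c(Z)=1}\overline{\mathbb{Q}}_{\ell,Z}(-1)\ \longrightarrow\ \bigoplus_{c(Z)=2}\overline{\mathbb{Q}}_{\ell,Z}(-2)\ \longrightarrow\ \cdots,
\]
with differentials given by the Gysin/restriction maps associated to inclusions $Z\supset Z'$ of codimensions differing by one, and prove that $\mathcal{G}^{\bullet}$ computes $\iota^{\ast}Rj_{\ast}\overline{\mathbb{Q}}_\ell$ in the sense that $\mathcal{H}^{k}(\mathcal{G}^{\bullet})\simeq\iota^{\ast}R^{k}j_{\ast}\overline{\mathbb{Q}}_\ell$.

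\emph{Step 3 (Combinatorial identification).} Taking stalks at $\overline{x}$ reduces $\mathcal{G}^{\bullet}$ to a complex of $\overline{\mathbb{Q}}_\ell$-vector spaces with terms $\bigoplus_{c(Z)=k,\ \overline{x}\in Z}\overline{\mathbb{Q}}_\ell(-k)$ and whose differentials are, by construction, the same as those entering the recursive definition of the $W_{Z}$. Splicing together the kernel computations defining each $W_{Z}$ and invoking the exactness assertion in $(\ast)$ then identifies the $k$-th cohomology of this stalk complex with $\bigoplus_{c(Z)=k,\ \overline{x}\in Z}W_{Z}(-k)$, yielding the theorem.

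\emph{Main obstacle.} The crux is Step 2: producing the Gysin complex and matching its cohomology with $\iota^{\ast}Rj_{\ast}\overline{\mathbb{Q}}_\ell$ without any transversality hypothesis on the $Z_{i}$. In the strict normal crossings case this reduces to the classical Steenbrink/Rapoport--Zink picture. In our generality one must argue by induction on the depth of the stratification, using absolute purity at each step and checking canonicity of the boundary maps so produced; the hypothesis $(\ast)$ is precisely the combinatorial input that makes the iterated Gysin differentials assemble into an exact complex whose cohomology is governed by the $W_{Z}$.
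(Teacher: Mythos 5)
Your Step 2 is where the entire content of the theorem lives, and as you yourself acknowledge you have not established it; but there is also a concrete reason why the complex $\mathcal{G}^{\bullet}$ you propose cannot do the job as literally written. The term you place in degree $k$ is $\bigoplus_{c(Z)=k}\overline{\mathbb{Q}}_{\ell,Z}(-k)$, which is pure of weight $2k$, whereas the adjacent terms have weights $2k\pm 2$. Any Galois-equivariant differential $\mathcal{G}^{k}\to\mathcal{G}^{k+1}$ between objects of different weight must vanish, so if the differentials are equivariant (as they must be for the final statement to be Frobenius-equivariant and canonical) then $\mathcal{H}^{k}(\mathcal{G}^{\bullet})=\bigoplus_{c(Z)=k}\overline{\mathbb{Q}}_{\ell,Z}(-k)$, not $\bigoplus_{c(Z)=k}W_{Z}(-k)$. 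The ranks are already wrong as soon as there are strata of codimension $\ge 2$, where the $W_Z$ are proper kernels. In other words, you have applied absolute purity too early: once each $b_{Z}^{!}\overline{\mathbb{Q}}_{\ell}$ has been replaced by its cohomology $\overline{\mathbb{Q}}_{\ell,Z}(-c(Z))[-2c(Z)]$, the boundary maps you want to feed into $(\ast)$ have nowhere to live.

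The paper sidesteps this by never forming $\mathcal{G}^{\bullet}$. Instead it fixes an injective resolution $I^{\bullet}$ of $\overline{\mathbb{Q}}_{\ell}$ on $X$ and considers the complex of complexes
\[
\cdots\longrightarrow\bigoplus_{c(Z)=i}W_{Z}\otimes b_{Z\ast}b_{Z}^{!}I^{\bullet}\longrightarrow\cdots\longrightarrow\bigoplus_{c(Z)=1}W_{Z}\otimes b_{Z\ast}b_{Z}^{!}I^{\bullet}\longrightarrow\iota^{\ast}I^{\bullet}\longrightarrow\iota^{\ast}j_{\ast}j^{\ast}I^{\bullet}\longrightarrow 0\,,
\]
where the $b_{Z}^{!}I^{\bullet}$ are honest complexes of injectives, so there is no premature passage to cohomology and no weight obstruction. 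The crucial technical input is that an injective sheaf on the stratified space decomposes as $I=\bigoplus_{Z}f_{Z\ast}I_{Z}$ with $f_{Z}\colon\mathring{Z}\hookrightarrow X$ and $I_{Z}$ injective on $\mathring{Z}$; this reduces the exactness of the above complex to a termwise check that is literally the condition $(\ast)$ on the vector spaces $W_{Z}$. Only after this formal exactness is in hand does one invoke Thomason's cohomological purity to identify each $b_{Z\ast}b_{Z}^{!}I^{\bullet}$ with $b_{Z\ast}\overline{\mathbb{Q}}_{\ell,Z}(-c(Z))[-2c(Z)]$ and extract the stalk formula, as in the proof of Theorem 8.2 of \cite{Scholze}. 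So the two approaches share the ingredients (purity, the combinatorics encoded in $(\ast)$), but the order of operations is essential: you should build the resolution before taking cohomology, not after.
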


This generalizes Theorem A of \cite{Scholze}, and again makes use of Thomason's purity theorem, \cite{Thomason}, a special case of Grothendieck's purity conjecture.

It is worthwhile noting that this theorem implies the (nontrivial) fact that the groups
\[
(\iota^{\ast}R^kj_{\ast} \overline{\mathbb{Q}}_{\ell})_{\overline{x}}
\]
are pure (of weight $2k$). Also, the description is analogous to results of Brieskorn, \cite{Brieskorn}, Lemma 5, on the cohomology of complements of hyperplane arrangements. Indeed, one may think of the special fibre components of maximal dimension as hyperplanes, and the cohomology groups can be thought of as the cohomology groups of a small neighborhood of $\overline{x}$ in the complement of the geometric special fibre $X_{\overline{s}}$ in $X_{\mathcal{O}^{\mathrm{ur}}}$.

In order to formulate our other results, we need to introduce certain Shimura varieties. Fix an imaginary quadratic field $k$ and a central division algebra $D$ over $k$ of dimension $n^2$, with an involution $\ast$ of the second kind. Further, let an $\mathbb{R}$-algebra homomorphism
\[
h_0: \mathbb{C}\longrightarrow D_{\mathbb{R}}
\]
be given such that $h_0(z)^{\ast}=h_0(\overline{z})$ and such that the involution $x\longmapsto h_0(i)^{-1}x^{\ast}h_0(i)$ is positive. These data give rise to an algebraic group $\mathbf{G}$ over $\mathbb{Q}$ whose $R$-valued points are
\[
\mathbf{G}(R)=\{g\in (D\otimes_\mathbb{Q} R)^{\times} \mid g^{\ast}g\in R^{\times} \}\ ,
\]
and a homomorphism
\[
h: R_{\mathbb{C}/\mathbb{R}} \mathbb{G}_m\longrightarrow \mathbf{G}_{\mathbb{R}}\ .
\]
Taking any small compact open subgroup $K\subset \mathbf{G}(\mathbb{A}_f)$, one gets a smooth and proper Shimura variety $\mathrm{Sh}_K$ associated to $(\mathbf{G},h^{-1},K)$, canonically defined over $k$, although the reflex field may be even smaller, i.e. $\mathbb{Q}$.

We make the assumption that $\mathbf{G}\cong \mathbf{GU}(1,n-1)$. It is well-known that under these assumptions the associated Shimura varieties $\mathrm{Sh}_K$ have good integral models at primes $\mathfrak{p}$ above primes $p$ which split in $k$ and such that $D$ splits at $\mathfrak{p}$, if the level is given by a principal congruence subgroup. In fact, these integral models satisfy the hypotheses of Theorem A, as proved in Theorem \ref{SpecialFiberShimuraVariety} and Lemma \ref{CombSteinberg}.

For any $r\geq 1$ and any $h\in C_c^{\infty}(\GL_n(\mathbb{Z}_p))$, we define a function $\phi_h=\phi_{r,h}$ in the Hecke algebra of $\GL_n(\mathbb{Q}_{p^r})$, cf. Definition \ref{DefPhih}. Its values encode the semisimple trace of Frobenius on the vanishing cycles, cf. Theorem \ref{CalcSemisimpleTrace}.

Finally, let $f_{n,p}$ be the function of the Bernstein center of $\GL_n(\mathbb{Q}_p)$ such that for all irreducible admissible smooth representations $\pi$ of $\GL_n(\mathbb{Q}_p)$, $f_{n,p}$ acts by the scalar
\[
p^{\frac {n-1}2 r} \mathrm{tr}^{\mathrm{ss}}(\Phi_p^r|\sigma_{\pi})\ ,
\]
where $\sigma_{\pi}$ is the representation of the Weil group $W_{\mathbb{Q}_p}$ of $\mathbb{Q}_p$ with values in $\overline{\mathbb{Q}}_{\ell}$ associated to $\pi$ by the Local Langlands Correspondence, cf. \cite{HarrisTaylor}.

Assume that the Haar measures give maximal compact subgroups measure $1$.

\begin{thmB} For all $h\in C_c^{\infty}(\GL_n(\mathbb{Z}_p))$, the functions $\phi_h$ and $f_{n,p}\ast h$ have matching (twisted) orbital integrals.
\end{thmB}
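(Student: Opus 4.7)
The plan follows the spectral strategy from \cite{Scholze}, adapted to higher rank. First, I would reduce the matching of (twisted) orbital integrals to an identity of characters. Since both functions lie in the appropriate Hecke algebras, by density of (strongly) regular semisimple elements together with the Weyl integration formula and the base change fundamental lemma of Arthur--Clozel for $\GL_n$, the matching is equivalent to showing that for every irreducible admissible smooth representation $\pi$ of $\GL_n(\mathbb{Q}_p)$, the twisted trace of $\phi_h$ on the base change $\mathrm{BC}(\pi)$ equals the trace of $f_{n,p} \ast h$ on $\pi$. By the defining spectral property of $f_{n,p}$, this right-hand side is $p^{\frac{n-1}{2}r} \mathrm{tr}^{\mathrm{ss}}(\Phi_p^r \mid \sigma_\pi) \cdot \mathrm{tr}(h \mid \pi)$, so everything reduces to computing the twisted trace of $\phi_h$ on $\mathrm{BC}(\pi)$ and matching it with this scalar.

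Second, I would compute this twisted trace geometrically. By Definition \ref{DefPhih} and Theorem \ref{CalcSemisimpleTrace}, the values of $\phi_h$ encode the semisimple trace of Frobenius on the vanishing cycles of $\mathrm{Sh}_K$. Applying Theorem A --- available here via Theorem \ref{SpecialFiberShimuraVariety} and Lemma \ref{CombSteinberg} --- the semisimple Frobenius trace on the stalks of $R^k j_\ast \overline{\mathbb{Q}}_\ell$ decomposes as a sum over the special fibre components $Z$ containing a given geometric point, each contributing the Tate twist $W_Z(-k)$. Since the Drinfeld level structures identify these components combinatorially with parahoric subgroups of $\GL_n(\mathbb{Q}_p)$, the twisted trace of $\phi_h$ on $\mathrm{BC}(\pi)$ should take the form of an explicit alternating sum over parahorics of dimensions of Jacquet modules of $\pi$, weighted by unramified characters corresponding to the Newton data of each stratum.

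The main obstacle is to identify this combinatorial/Jacquet-module expression with the representation-theoretic scalar $p^{\frac{n-1}{2}r} \mathrm{tr}^{\mathrm{ss}}(\Phi_p^r \mid \sigma_\pi)$. I would proceed by reducing to the supercuspidal case via parabolic induction, using that both sides behave compatibly under Jacquet restriction (on the geometric side, through a Rapoport--Zink style description of non-basic strata as quotients involving Lubin--Tate spaces of lower height). For supercuspidal $\pi$ the identification is essentially the Harris--Taylor construction of the local Langlands correspondence, which realizes $\sigma_\pi$ inside the vanishing cycle cohomology of the basic stratum of $\mathrm{Sh}_K$. The most delicate technical points will be tracking all normalization conventions throughout the computation (in particular the factor $p^{\frac{n-1}{2}r}$ arising from the complex dimension of $\mathrm{Sh}_K$ and the volume normalizations of the parahorics), and handling the non-elliptic Bernstein components --- where $f_{n,p}$ acts by more intricate scalars --- via an inductive argument exploiting the compatibility of local Langlands with parabolic induction.
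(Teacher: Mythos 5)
Your first paragraph correctly identifies the reduction to a character identity (this is exactly what the paper does via Theorem~\ref{BaseChangeIdentity}: it suffices to show $\tr(f_{n,p}\ast h\mid\pi)=\tr((\phi_h,\sigma)\mid\Pi)$ for all tempered $\pi$ with base-change lift $\Pi$, and then to invoke Kazhdan density and Clozel's Prop.~7.2). After that, however, the proposal diverges from the actual argument in ways that I think would make it fail.

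The central missing ingredient is Lemma~\ref{WeylIntegration}: the trace $\tr((\phi_h,\sigma)\mid\Pi)$ is computed not ``geometrically'' but by a purely local automorphic calculation, namely the twisted Weyl integration formula combined with Casselman's formula for characters on compact elements, the factorization of the Weyl discriminant across the Levi $\GL_k\times\GL_{n-k}$, and Arthur--Clozel's Proposition 3.12 for descent of twisted orbital integrals to a Levi. That is what produces the expression as a sum over $k$ of terms involving the Jacquet modules $\pi_{N_k}$, the elliptic twisted trace $\tr((\chi_{B_k},\sigma)\mid\Pi^1_{N_k,i})$, and $\tr(h\mid\mathrm{Ind}_{P_k}I^0_k\otimes\pi^2_{N_k,i})$. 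Reapplying Theorem~A or Theorem~\ref{CalcSemisimpleTrace} at this point is a category error: those results describe the nearby-cycle stalks on the Shimura variety, and they were already consumed in \emph{defining} $\phi_h$; they give no tool for computing the twisted trace of the resulting function against a base-changed representation. Likewise, the special fibre components are indexed by direct summands of $(\mathbb{Z}/p^m\mathbb{Z})^n$ with Steinberg-type multiplicity spaces $W_H$ --- not by parahorics --- and no ``Rapoport--Zink description of non-basic strata'' appears or is needed.

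The second genuine gap is in the endgame. You propose to reduce to supercuspidal $\pi$ and there invoke Harris--Taylor's geometric realization of $\sigma_\pi$. But in the paper's inductive proof of the identity~\eqref{EqRepr}, the supercuspidal case (indeed any $\pi$ whose supercuspidal support contains no unramified character) is \emph{trivial}: both sides vanish, because $\chi_{B_k}$ is supported on elements with elliptic norm and the corresponding $\Pi^1_{N_k,i}$ are properly induced, while $\mathrm{tr}^{\mathrm{ss}}(\Phi_p^r\mid\sigma_\pi)=0$ by the direct description in Remark~\ref{DefSemisimpleTrace}. The actual base case of the induction is the opposite extreme, namely unramified characters $\chi\circ\det$, where the identity follows from the definition of $I_k^0$ and the exactness of the Steinberg complex proved in Lemma~\ref{CombSteinberg}. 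The inductive step uses the Bernstein--Zelevinsky geometric lemma for Jacquet restriction of parabolically induced representations, not any compatibility of vanishing cycles with Rapoport--Zink uniformization. Appealing to Harris--Taylor at this stage would also undercut the point (made explicit in Remark~\ref{DefSemisimpleTrace} and in the companion paper on local Langlands) that the argument is designed to avoid presupposing that result.
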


This theorem furnishes a comparison between a purely geometric expression, $\phi_h$, and a purely automorphic expression, $f_{n,p}\ast h$. It is analogous to Theorem B in \cite{Scholze} for the case of modular curves, with some notable differences. Usually, one first compares $\phi_h$ with a function in the Bernstein center of $\GL_n(\mathbb{Q}_{p^r})$ and then invokes a base-change identity as in the case of a hyperspecial maximal compact subgroup. Our Theorem B does both steps at once.

To get the comparison with the twisted orbital integrals of a function in the Bernstein center of $\GL_n(\mathbb{Q}_{p^r})$, we prove the following base-change identity. In Section 2, we define a certain class of smooth group schemes $\mathcal{G}=\mathcal{G}_{M,I}$ over $\mathbb{Z}_p$, whose generic fibre is isomorphic to $\GL_n$. They include the case of principal congruence subgroups and parahoric subgroups. As a general piece of notation, we let $\mathcal{Z}(G,K)$ be the center of the Hecke algebra $\mathcal{H}(G,K)$ of compactly supported $K$-biinvariant functions on $G$, for any compact open subgroup $K$ of $G=\GL_n(\mathbb{Q}_p)$ or $G=\GL_n(\mathbb{Q}_{p^r})$.

\begin{thmC} Let $f\in \mathcal{Z}(\GL_n(\mathbb{Q}_p),\mathcal{G}(\mathbb{Z}_p))$ and $\phi\in \mathcal{Z}(\GL_n(\mathbb{Q}_{p^r}),\mathcal{G}(\mathbb{Z}_{p^r}))$ be given. For every tempered irreducible smooth representation $\pi$ of $\GL_n(\mathbb{Q}_p)$ with base-change lift $\Pi$, the set of invariants $\pi^{\mathcal{G}(\mathbb{Z}_p)}$ is nonzero only if $\Pi^{\mathcal{G}(\mathbb{Z}_{p^r})}$ is nonzero. Assume that for all $\pi$ with $\pi^{\mathcal{G}(\mathbb{Z}_p)}\neq 0$, the scalars $c_{f,\pi}$ resp. $c_{\phi,\Pi}$ through which $f$ resp. $\phi$ act on $\pi^{\mathcal{G}(\mathbb{Z}_p)}$ resp. $\Pi^{\mathcal{G}(\mathbb{Z}_{p^r})}$, agree: $c_{f,\pi} = c_{\phi,\Pi}$.

Then $f$ and $\phi$ have matching (twisted) orbital integrals.
\end{thmC}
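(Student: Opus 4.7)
The plan is to pass to the spectral side. By the (twisted) Weyl integration formula together with the trace Paley--Wiener theorem of Bernstein--Deligne--Kazhdan and its twisted analogue for $\GL_n$ (Clozel, Rogawski, Labesse), the matching of (twisted) orbital integrals of $f$ and $\phi$ is equivalent to the spectral identity
\[
\mathrm{tr}\,\pi(f) \;=\; \mathrm{tr}\bigl(\Pi(\phi)\circ I_{\Pi}\bigr)
\]
for every tempered irreducible smooth representation $\pi$ of $\GL_n(\mathbb{Q}_p)$ with $\sigma$-stable base-change lift $\Pi$ to $\GL_n(\mathbb{Q}_{p^r})$, where $\sigma$ generates $\mathrm{Gal}(\mathbb{Q}_{p^r}/\mathbb{Q}_p)$ and $I_{\Pi}\colon\Pi\to\Pi^{\sigma}$ is the Whittaker-normalized Shintani intertwiner. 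So the entire content is to verify this identity of (twisted) characters on the tempered spectrum.

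Now exploit that $f$ and $\phi$ are central. Since $f$ is $\mathcal{G}(\mathbb{Z}_p)$-biinvariant, $\pi(f)$ factors through the projection onto $\pi^{\mathcal{G}(\mathbb{Z}_p)}$ and, by centrality, acts on this space as the scalar $c_{f,\pi}$; hence $\mathrm{tr}\,\pi(f)=c_{f,\pi}\dim\pi^{\mathcal{G}(\mathbb{Z}_p)}$. As $\mathcal{G}$ is defined over $\mathbb{Z}_p$, the compact subgroup $\mathcal{G}(\mathbb{Z}_{p^r})$ is $\sigma$-stable, so $I_{\Pi}$ restricts to $\Pi^{\mathcal{G}(\mathbb{Z}_{p^r})}$; the analogous computation for $\phi$ yields
\[
\mathrm{tr}\bigl(\Pi(\phi)\circ I_{\Pi}\bigr) \;=\; c_{\phi,\Pi}\cdot\mathrm{tr}\bigl(I_{\Pi}\,\vert\,\Pi^{\mathcal{G}(\mathbb{Z}_{p^r})}\bigr).
\]
Substituting the scalar identity $c_{f,\pi}=c_{\phi,\Pi}$ from the hypothesis, the theorem reduces to the representation-theoretic identity
\[
\dim\pi^{\mathcal{G}(\mathbb{Z}_p)} \;=\; \mathrm{tr}\bigl(I_{\Pi}\,\vert\,\Pi^{\mathcal{G}(\mathbb{Z}_{p^r})}\bigr)
\]
for tempered $\pi$, together with the simultaneous vanishing of both sides whenever $\pi^{\mathcal{G}(\mathbb{Z}_p)}=0$.

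This dimension-trace identity is the core content and the main obstacle. It is a form of the fundamental lemma for base change tailored to the smooth group schemes $\mathcal{G}=\mathcal{G}_{M,I}$ constructed in Section~2. In the hyperspecial case $\mathcal{G}(\mathbb{Z}_p)=\GL_n(\mathbb{Z}_p)$ it follows from the compatibility of the Satake isomorphism with base change: both sides equal $1$ with the Whittaker normalization of $I_{\Pi}$. The plan for the general $\mathcal{G}_{M,I}$ is to descend to the Iwahori level and then verify the identity via the Iwahori--Matsumoto and Bernstein presentations of the affine Hecke algebra, combined with Shintani's explicit description of the base-change intertwiner on Iwahori-level invariants, bootstrapping from the hyperspecial case. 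The preliminary ``nonzero only if'' compatibility asserted in the theorem is a byproduct of the same analysis: whenever $\pi$ admits $\mathcal{G}(\mathbb{Z}_p)$-invariants, $\Pi$ inherits invariants under the $\sigma$-stable compact $\mathcal{G}(\mathbb{Z}_{p^r})$.
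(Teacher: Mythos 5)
Your structural reduction is essentially the paper's: use centrality to reduce the spectral identity $\tr(f\mid\pi)=\tr((\phi,\sigma)\mid\Pi)$ to the single identity at the unit element,
\[
\dim\pi^{\mathcal{G}(\mathbb{Z}_p)}=\tr\bigl(e_{\mathcal{G}(\mathbb{Z}_p)}\mid\pi\bigr)
\;=\;
\tr\bigl((e_{\mathcal{G}(\mathbb{Z}_{p^r})},\sigma)\mid\Pi\bigr),
\]
and then pass from the spectral identity for all tempered $\pi$ to matching orbital integrals. (One small remark: you do not need the full trace Paley--Wiener equivalence for this last step, and the paper does not use it. It takes the one-sided route: existence of a transfer $f'$ of $\phi$ by Arthur--Clozel, the Weyl integration formula to convert $\tr(f\mid\pi)=\tr((\phi,\sigma)\mid\Pi)$ into $\tr(f-f'\mid\pi)=0$ for all tempered $\pi$, Kazhdan's density theorem to get vanishing of regular orbital integrals of $f-f'$, and Clozel's descent (\cite{Clozel}, Prop.\ 7.2) to extend to all semisimple orbital integrals. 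That direction is more robust than asserting an equivalence.)

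The genuine gap is exactly where you flag it: the ``dimension equals twisted trace'' identity for the group schemes $\mathcal{G}_{M,I}$. You correctly isolate it as the crux, but you leave it as a plan, and the plan is not the paper's and does not obviously go through. Descending to Iwahori level and invoking the Iwahori--Matsumoto/Bernstein presentations, or Shintani's explicit intertwiner, covers parahoric $\mathcal{G}$ (this is essentially \cite{HainesBC}), but the schemes $\mathcal{G}_{M,I}$ include arbitrarily deep congruence subgroups (e.g.\ $\ker(\GL_n(\mathcal{O}_K)\to\GL_n(\mathcal{O}_K/\varpi^m))$), for which the affine Hecke algebra machinery does not directly apply. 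There is also a normalization issue: the intertwiner entering the theorem is pinned down by the base-change character identity $\tr(Ng\mid\pi)=\tr((g,\sigma)\mid\Pi)$ on regular elements, and any proof must show that the twisted trace on $\mathcal{G}(\mathbb{Z}_{p^r})$-invariants computed with that normalization gives $\dim\pi^{\mathcal{G}(\mathbb{Z}_p)}$; appealing to a possibly different ``Whittaker normalization'' introduces an additional compatibility to check.

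What the paper actually does for the key identity is short and elementary and bypasses all Hecke-algebra structure. Proposition \ref{SigmaConjClasses} constructs a norm bijection between $\sigma$-conjugacy classes in the finite group $\mathcal{G}_M(\mathcal{O}_L)/\mathcal{G}_{M,I}(\mathcal{O}_L)$ and conjugacy classes in $\mathcal{G}_M(\mathcal{O}_K)/\mathcal{G}_{M,I}(\mathcal{O}_K)$, preserving centralizer sizes; the crucial input is surjectivity of the norm map on units of the commutative order $\mathcal{O}_K[\gamma]\subset M$, which is where the requirement that $M$ be an $\mathcal{O}_K$-subalgebra (not merely a lattice) is used. This yields Corollary \ref{BCUnit}: for any conjugation-invariant locally integrable $f$ on $\mathcal{G}_M(\mathcal{O}_K)$ and $\phi(\delta)=f(N\delta)$, one has $(e_{\mathcal{G}_{M,I}(\mathcal{O}_L)}\ast\phi)(\delta)=(e_{\mathcal{G}_{M,I}(\mathcal{O}_K)}\ast f)(N\delta)$. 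Applying this with $f=\Theta_\pi$ (locally integrable by Arthur--Clozel) and $\delta=1$ gives the dimension--twisted-trace identity in exactly the normalization dictated by the base-change character identity, for every $\mathcal{G}_{M,I}$ at once. This step is what your proposal is missing; without it the argument is incomplete.
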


\begin{rem} In fact, we prove this theorem for any local field of characteristic $0$ in place of $\mathbb{Q}_p$.
\end{rem}

This theorem generalizes, for the special case of the group $\GL_n$, the classical base-change fundamental lemma as well as the results of Haines about parahoric subgroups $\mathcal{G}$ in \cite{HainesBC}. The proof of Theorem C makes strong use of (unramified) base change for $\GL_n$ and hence does not generalize to arbitrary groups.

Using Theorem C, one arrives at an expression for the semisimple Lefschetz number as a sum of a volume factor times an orbital integral away from $p$ times a twisted orbital integral of a central function in a certain Hecke algebra at $p$, cf. Theorem \ref{HainesKottwitz}, procing a general conjecture of Haines and Kottwitz in this special case.

Our last main theorem calculates the {\it semisimple} local factor of the Hasse-Weil zeta-function of $\mathrm{Sh}_K/k$ in terms of automorphic $L$-functions. Recall that $\mathfrak{p}$ is a prime of $k$ lying above a rational prime $p$ that splits in $k$ and such that $D$ splits at $\mathfrak{p}$. In particular,
\[
\mathbf{G}_{\mathbb{Q}_p}\cong \GL_n\times \mathbb{G}_m\ .
\]

\begin{thmD} Let $K\subset \mathbf{G}(\mathbb{A}_f)$ be any small compact open subgroup of the form
\[
K=K_p^0\times \mathbb{Z}_p^{\times}\times K^p\subset \GL_n(\mathbb{Q}_p)\times \mathbb{Q}_p^{\times}\times \mathbf{G}(\mathbb{A}_f^p)\cong \mathbf{G}(\mathbb{A}_f)\ .
\]
As in \cite{KottwitzLambdaAdic}, p.656, there is a representation $r$ of the local $L$-group ${^L}\mathbf{G}_{k_\mathfrak{p}}$. Then the semisimple local factor of $\mathrm{Sh}_K$ at $\mathfrak{p}$ is given by
\[
\zeta_{\mathfrak{p}}^{\mathrm{ss}}(\mathrm{Sh}_K,s) = \prod_{\pi_f} L^{\mathrm{ss}}(s-\tfrac{n-1}{2},\pi_p,r)^{a(\pi_f)\mathrm{dim} \pi_f^K}\ ,
\]
where $\pi_f$ runs over irreducible admissible representations of $\mathbf{G}(\mathbb{A}_f)$ and where $a(\pi_f)\in\mathbb{Z}$ is given by
\[
a(\pi_f)=\sum_{\pi_{\infty}} m(\pi_f\otimes \pi_{\infty}) \tr(f_{\infty}|\pi_{\infty})\ .
\]
Here $m(\pi_f\otimes\pi_{\infty})$ is the multiplicity of $\pi_f\otimes \pi_\infty$ in $L^2(\mathbf{G}(\mathbb{Q})A_{\mathbf{G}}(\mathbb{R})^0\backslash \mathbf{G}(\mathbb{A}))$, where $A_{\mathbf{G}}$ is the split component of the center of $\mathbf{G}$. Finally, $f_{\infty}$ is the function defined by Kottwitz in \cite{KottwitzLambdaAdic}. It is up to sign a pseudo-character of a discrete series representation with trivial central and infinitesimal character.
\end{thmD}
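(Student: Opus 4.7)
The plan is to follow the Langlands-Kottwitz strategy, using Theorems A--C as the main geometric and harmonic-analytic inputs, and exploiting the fact that $\mathbf{G} \cong \mathbf{GU}(1,n-1)$ is anisotropic modulo center and admits no endoscopy, so the stabilization step is essentially trivial.

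\textbf{Geometric side.} First I would write the semisimple local factor as
\[
\log \zeta_{\mathfrak{p}}^{\mathrm{ss}}(\mathrm{Sh}_K,s) = \sum_{r\geq 1} \tfrac{1}{r} N_r q^{-rs},
\]
where $N_r$ is the semisimple Lefschetz number of $\Phi_{\mathfrak{p}}^r$ on $H^\ast_c(\mathrm{Sh}_{K,\bar\eta}, \overline{\mathbb{Q}}_\ell)$. By properness of $\mathrm{Sh}_K$ and the existence of the regular integral models satisfying the hypotheses of Theorem A (by Theorem \ref{SpecialFiberShimuraVariety} and Lemma \ref{CombSteinberg}), $N_r$ is the sum over $\mathbb{F}_{p^r}$-points $x$ of the special fibre of $\mathrm{tr}^{\mathrm{ss}}(\Phi_{\mathfrak{p}}^r \mid (R\Psi \overline{\mathbb{Q}}_\ell)_{\bar x})$. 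By Theorem \ref{CalcSemisimpleTrace}, these local terms are packaged into the function $\phi_h = \phi_{r,h}$ with $h = \mathrm{vol}(K_p^0)^{-1}\mathbf{1}_{K_p^0}$. A Honda--Tate / Kottwitz count of isogeny classes in the special fibre then identifies $N_r$ with
\[
N_r = \sum_{(\gamma_0,\gamma,\delta)} c(\gamma_0,\gamma,\delta) \cdot O_{\gamma}(f^p) \cdot TO_{\delta}(\phi_h) \cdot \mathrm{tr}(f_\infty \mid \pi_\infty^{\mathrm{dis}}),
\]
running over Kottwitz triples, with $f^p = \mathrm{vol}(K^p)^{-1}\mathbf{1}_{K^p}$ and with the archimedean factor coming from Kottwitz's $f_\infty$.

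\textbf{Automorphic side.} Next I would apply Theorem B to replace $\phi_h$ by $f_{n,p}\ast h$ without changing any (twisted) orbital integral, and then use the unramified base change fundamental lemma (a special case of Theorem C applied to $\mathcal{G} = \GL_n$) to pass from twisted orbital integrals on $\GL_n(\mathbb{Q}_{p^r})$ to ordinary orbital integrals on $\GL_n(\mathbb{Q}_p)$. Because $\mathbf{G}$ has no elliptic endoscopy beyond the trivial one, the resulting geometric sum equals the elliptic part of the Arthur--Selberg trace formula for the test function
\[
f = (f_{n,p}\ast h) \otimes f^p \otimes f_\infty
\]
on $\mathbf{G}(\mathbb{A})$, and by the simple trace formula available here (Harris--Taylor, Kottwitz) this equals $\sum_{\pi} m(\pi)\,\mathrm{tr}\,\pi(f)$.

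\textbf{Extraction of the $L$-factor.} At this point the factor at $p$ is, by the very definition of $f_{n,p}$,
\[
\mathrm{tr}\,\pi_p(f_{n,p}\ast h) = p^{\tfrac{n-1}{2}r}\,\mathrm{tr}^{\mathrm{ss}}(\Phi_p^r \mid \sigma_{\pi_p})\cdot \mathrm{tr}\,\pi_p(h),
\]
where the scalar action of $f_{n,p}$ uses that $\pi_p$ is tempered (by purity of the weight and the Ramanujan property for cohomological forms). Since $r$ restricted to the $\GL_n$-factor is the standard representation and $\pi_p$ contributes only if $\pi_p^{K_p^0}\neq 0$, summing over $r$ gives exactly $\log L^{\mathrm{ss}}(s-\tfrac{n-1}{2},\pi_p,r)$, with multiplicity $\dim \pi_f^K$ from $\mathrm{tr}\,\pi_f(h\otimes f^p)$ and the archimedean weight $a(\pi_f) = \sum_{\pi_\infty} m(\pi_f\otimes\pi_\infty)\,\mathrm{tr}(f_\infty \mid \pi_\infty)$. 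Exponentiating yields the stated product formula.

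\textbf{Main obstacle.} The hardest ingredient is the passage from $\phi_h$ to $f_{n,p}\ast h$ in a form useful at arbitrary level $K_p^0$ (not only hyperspecial), and it is precisely Theorem B, via Theorem C, that delivers this: the classical approach, which first identifies $\phi_h$ with an element of the Bernstein center of $\GL_n(\mathbb{Q}_{p^r})$ and then applies base change, is replaced here by a simultaneous matching statement. The remaining difficulties (stabilization, vanishing of endoscopic contributions, temperedness needed to invoke the Bernstein-center scalar) are either trivial in the Harris--Taylor setting or follow from work of Kottwitz and Clozel reproduced in \cite{HarrisTaylor}.
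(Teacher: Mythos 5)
Your proposal follows essentially the same route as the paper: compute the semisimple Lefschetz number pointwise via Theorem \ref{CalcSemisimpleTrace}, package the result as a sum of products of a volume factor, an orbital integral of $f^p$, and a twisted orbital integral of $\phi_{h_p}$ over Kottwitz triples, transfer the $p$-adic factor to $\GL_n(\mathbb{Q}_p)$, and compare with the trace formula as in \cite{KottwitzLambdaAdic}, extracting the $L$-factor at the end. One correction: Theorem B already produces the function $f_{n,p}\ast h$ \emph{on} $\GL_n(\mathbb{Q}_p)$ whose ordinary orbital integrals match the twisted orbital integrals of $\phi_h$ on $\GL_n(\mathbb{Q}_{p^r})$, so your separate second invocation of ``the unramified base change fundamental lemma (a special case of Theorem C)'' to pass from twisted to ordinary orbital integrals is redundant --- as you yourself observe in the final paragraph, Theorem B does both steps at once; in the paper, Theorem C is invoked separately only to establish the Haines--Kottwitz formulation (Theorem \ref{HainesKottwitz}), not in the proof of Theorem D itself.
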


\begin{rem} For a discussion of the semisimple local factor, we refer e.g. to \cite{HainesNgo}, cf. also \cite{Scholze}.

Note that under the assumptions, the local group $\mathbf{G}_{\mathbb{Q}_p}\cong \GL_n\times \mathbb{G}_m$. Therefore the local Langlands Correspondence for $\mathbf{G}_{\mathbb{Q}_p}$ is known and hence one can define the semisimple $L$-factor $L^{\mathrm{ss}}(s,\pi_p,r)$ as the semisimple $L$-factor $L^{\mathrm{ss}}(s,r\circ \sigma_{\pi_p})$ of the Weil group representation $\sigma_{\pi_p}$ associated to $\pi_p$ through the Local Langlands Correspondence, see e.g. \cite{HarrisTaylor}. However, unraveling all the definitions, one sees that one could define this $L$-factor directly, without appealing to the work of Harris-Taylor, cf. Remark \ref{DefSemisimpleTrace}.
\end{rem}

Note that Theorem D is essentially known by the work of Harris-Taylor, \cite{HarrisTaylor}. The emphasis of this paper lies in the method of proof. It is an extension of the method used by Kottwitz in \cite{KottwitzLambdaAdic} for a hyperspecial maximal compact level structure at $p$, and more classically by Langlands in \cite{Langlands} to prove a local-global compatibility statement for the cohomology of modular curves in the case that the local representation is not supercuspidal. In a similar spirit is the proof of Theorem 11.7 in \cite{Haines} in the case that $K_p$ is an Iwahori subgroup (but without restriction on the signature of $\mathbf{G}$) and the article of Haines and Rapoport, \cite{HainesRapoport}, for the case that the level structure at $p$ is given by the pro-unipotent radical of an Iwahori subgroup. Here, as in \cite{Scholze}, which discusses the case of $\GL_2$, we show that this method can be applied for arbitrarily small level structures at $p$.

In the paper \cite{ScholzeLLC}, we use the methods and results of this paper to give a new proof of the Local Langlands Correspondence for $p$-adic local fields, avoiding the use of the numerical Local Langlands Correspondence of Henniart, \cite{HenniartNumericalLLC}.

We now describe the content of the different sections. In Section 2, we prove the base-change identity, Theorem C. It makes strong use of base-change for $\GL_n$. Afterwards, we review in Section 3 the geometric results concerning the Shimura varieties, particularly their interpretation as moduli spaces of abelian varieties and a description of the special fibre. In Section 4, we recall some of the aspects of Kottwitz' work on the description of isogeny classes in the special fibre of these Shimura varieties, for a hyperspecial maximal compact level structure at $p$. The main geometric result, Theorem A, is then formulated and proved in Section 5 along with the application to the Shimura varieties in question. We remark that the result is closely related to the cohomology of Drinfeld's upper half plane, as determined by Schneider-Stuhler, \cite{SchneiderStuhler}. These results are then reinterpreted as orbital integrals in Section 6 to prove Theorem B, again making use of base-change for $\GL_n$. Finally, Section 7 concludes with the proof of Theorem D.

\begin{rem} Throughout the paper we fix a rational prime $\ell$ different from $p$ and an isomorphism $\overline{\mathbb{Q}}_{\ell}\cong \mathbb{C}$.
\end{rem}

{\bf Acknowledgments}. I thank my advisor M. Rapoport for introducing me to this area, his constant encouragement during the process of writing this paper, and his interest in these ideas.

\section{A base change identity}

Let $K$ be a local field of characteristic $0$ with ring of integers $\mathcal{O}_K$ and uniformiser $\varpi$ and let $L$ be an unramified extension of $K$ with ring of integers $\mathcal{O}_L$. Consider the group $G=\mathrm{GL}_n$ over $K$. We give a construction of compact open subgroups of $G(K)$ and $G(L)$.

Start with an $\mathcal{O}_K$-subalgebra $M$ of $M_n(\mathcal{O}_K)$ such that $M\otimes K = M_n(K)$. Further, take a (two-sided) ideal $I\subset M$ such that $M/I$ is finite. Then one gets the smooth group scheme $\mathcal{G}$ over $\mathcal{O}_K$ defined by
\[
\mathcal{G}_{M,I}(R) = \{ x\in I\otimes_{\mathcal{O}_K} R\mid g=1+x\in (M\otimes_{\mathcal{O}_K} R)^{\times} \} \ .
\]
In particular, we get compact open subgroups $\mathcal{G}_{M,I}(\mathcal{O}_L)$, resp. $\mathcal{G}_{M,I}(\mathcal{O}_K)$, of $G(L)$, resp. $G(K)$. We write $\mathcal{G}_M = \mathcal{G}_{M,M}$.

Let us give two examples. First, taking $M=M_n(\mathcal{O}_K)$ and $I=\varpi^mM_n(\mathcal{O}_K)$, we get the principal congruence subgroups
\[
\mathcal{G}_{M,I}(\mathcal{O}_K) = \ker(\GL_n(\mathcal{O}_K)\longrightarrow \GL_n(\mathcal{O}_K/\varpi^m\mathcal{O}_K) )\ .
\]
Second, one may fix a lattice chain
\[
\Lambda_0 = \mathcal{O}_K^n\subsetneq \Lambda_1\subsetneq \cdots \subsetneq \Lambda_k=\varpi^{-1}\mathcal{O}_K^n
\]
and consider the subalgebra
\[
M=\{m\in M_n(\mathcal{O}_K) \mid m\Lambda_i\subset \Lambda_i\ \mathrm{for\ all}\ i\}\subset M_n(\mathcal{O}_K)\ .
\]
Then $\mathcal{G}_M$ is the parahoric group scheme given by the lattice chain $(\Lambda_i)_i$.

We begin by proving the following comparison of conjugacy and $\sigma$-conjugacy classes.

\begin{prop}\label{SigmaConjClasses} There is a unique map $N$ from the set of $\sigma$-conjugacy classes in $\mathcal{G}_M(\mathcal{O}_L)/\mathcal{G}_{M,I}(\mathcal{O}_L)$ to the set of conjugacy classes in $\mathcal{G}_M(\mathcal{O}_K)/\mathcal{G}_{M,I}(\mathcal{O}_K)$, satisfying the requirement that if $\delta\in \mathcal{G}_M(\mathcal{O}_L)$ has the property $\delta\delta^{\sigma}\cdots\delta^{\sigma^{r-1}}\in \mathcal{G}_M(\mathcal{O}_K)$, then the $\sigma$-conjugacy class of $\delta$ is sent to the conjugacy class of $\delta\delta^{\sigma}\cdots\delta^{\sigma^{r-1}}$.

Moreover, the map $N$ is a bijection, and the size of the $\sigma$-centralizer of some element $\delta\in \mathcal{G}_M(\mathcal{O}_L)/\mathcal{G}_{M,I}(\mathcal{O}_L)$ equals the size of the centralizer of $N\delta\in \mathcal{G}_M(\mathcal{O}_K)/\mathcal{G}_{M,I}(\mathcal{O}_K)$.
\end{prop}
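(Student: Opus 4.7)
The plan is to recognize the statement as an instance of Shintani descent (Lang--Steinberg) for a connected smooth algebraic group over $\mathbb{F}_q$, by bundling the finite groups $H_L := \mathcal{G}_M(\mathcal{O}_L)/\mathcal{G}_{M,I}(\mathcal{O}_L)$ and $H_K := \mathcal{G}_M(\mathcal{O}_K)/\mathcal{G}_{M,I}(\mathcal{O}_K)$ into a single algebraic model.

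First I would reformulate these groups. Since $M/I$ is finite, it is killed by some $\varpi^N$. Smoothness of $\mathcal{G}_M$ combined with Hensel's lemma identifies $H_L = (M/I\otimes_{\mathcal{O}_K}\mathcal{O}_L)^{\times}$ and $H_K = (M/I)^{\times}$. The functor $R \mapsto (M/I\otimes_{\mathcal{O}_K}R)^{\times}$ on $\mathcal{O}_K/\varpi^N$-algebras is represented by a smooth affine group scheme of finite type, and its Greenberg transform is a smooth algebraic group $\mathbf{H}$ over $\mathbb{F}_q$ with $\mathbf{H}(\mathbb{F}_q) = H_K$, $\mathbf{H}(\mathbb{F}_{q^r}) = H_L$, and Frobenius corresponding to $\sigma$.

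The key geometric step is to verify that $\mathbf{H}$ is connected. Filter $M/I$ by the $\varpi$-adic powers $\varpi^i(M/I)$; this induces a filtration of $\mathbf{H}$ whose successive graded pieces above the bottom layer are additive groups (hence connected) attached to the quotients $\varpi^i(M/I)/\varpi^{i+1}(M/I)$. The bottom layer is the unit group of the finite-dimensional $\mathbb{F}_q$-algebra $A_0 := M/(I+\varpi M)$. After base change to $\overline{\mathbb{F}}_q$, $A_0$ decomposes as a product of local Artinian $\overline{\mathbb{F}}_q$-algebras, and the unit group of each factor is an extension of $\mathbb{G}_m$ by the unipotent group $1+\mathfrak{m}$, hence connected. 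So $\mathbf{H}$ itself is connected.

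With connectedness in hand, classical Shintani descent for $\mathbf{H}$ furnishes the stated bijection between $\sigma$-conjugacy classes in $\mathbf{H}(\mathbb{F}_{q^r})$ and conjugacy classes in $\mathbf{H}(\mathbb{F}_q)$, preserving centralizer cardinalities. The compatibility with the characterization by the literal norm is a consequence of Lang's theorem: the map $g\mapsto g^{-1}\delta\sigma(g)$ is surjective on $\mathbf{H}(\overline{\mathbb{F}}_q)$, which yields in every $\sigma$-conjugacy class a representative whose norm is $\sigma$-fixed (and thus lies in $\mathcal{G}_M(\mathcal{O}_K)$), while any two such representatives produce norms conjugate in $H_K$. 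I expect the main obstacle to be the connectedness verification, which must be done uniformly for all admissible pairs $(M,I)$ (principal congruence subgroups, parahoric orders, and intermediate cases); the $\varpi$-adic filtration together with the structure of finite-dimensional algebras over $\overline{\mathbb{F}}_q$ supplies it, after which the rest is standard Lang--Steinberg theory.
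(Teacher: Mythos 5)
Your proposal follows a genuinely different and more conceptual route than the paper's. The paper argues directly: given a conjugacy class $\overline{\gamma}$ in $\mathcal{G}_M(\mathcal{O}_K)/\mathcal{G}_{M,I}(\mathcal{O}_K)$, it lifts $\gamma\in\mathcal{G}_M(\mathcal{O}_K)$, passes to the \emph{commutative} subalgebra $R_\gamma=\mathcal{O}_K[\gamma]\subset M$, uses surjectivity of the norm on the abelian unit group scheme $\mathcal{Z}_\gamma$ (itself essentially Lang's theorem, but applied to a commutative group where connectedness is transparent) to produce a $\delta\in\mathcal{O}_L[\gamma]$ with $N\delta=\gamma$, computes by hand that the $\sigma$-centralizer of $\overline\delta$ equals the centralizer of $\overline\gamma$ (the key step being that $\delta\in\mathcal{O}_L[\gamma]$, so commuting with $\gamma$ modulo $I$ forces commuting with $\delta$ modulo $I$), and then finishes by a counting argument. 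Your route instead repackages everything as Shintani descent for the algebraic group $\mathbf H$ over $\mathbb{F}_q$. This is attractive, and the connectedness argument for $\mathbf H$ via the $\varpi$-adic filtration is correct.

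However, the step where you "consequently" deduce the literal-norm characterization from Lang's theorem is too quick and hides a real issue. Lang's theorem for $\mathbf H$ over $\overline{\mathbb{F}}_q$ does not immediately produce, inside a given $\sigma$-conjugacy class under $\mathbf{H}(\mathbb{F}_{q^r})$ (not $\mathbf{H}(\overline{\mathbb{F}}_q)$), a representative $\delta$ with $\delta\sigma(\delta)\cdots\sigma^{r-1}(\delta)$ fixed by $\sigma$; nor does it immediately show that two such representatives have $\mathbf H(\mathbb{F}_q)$-conjugate (rather than merely $\mathbf H(\overline{\mathbb{F}}_q)$-conjugate) norms. Both points require, in addition to the connectedness of $\mathbf{H}$, the connectedness of the centralizers $Z_{\mathbf{H}}(\beta)$ for $\beta\in\mathbf H(\mathbb{F}_q)$, so that Lang can be applied to them. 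This does hold here --- such a centralizer is again a unit group of a finite algebra, so your own filtration argument applies to it --- but you do not address it, and without it the Shintani machinery does not give the specific characterization of $N$ that the proposition asserts. Once that is supplied (and once you note that the identification $H_L\cong(M/I\otimes_{\mathcal{O}_K}\mathcal{O}_L)^\times$ uses the surjectivity of $\mathcal G_M(\mathcal O_L)\to(M/I\otimes\mathcal O_L)^\times$, which follows from smoothness and $\varpi$-adic completeness), the proof goes through. What the paper's route buys is the avoidance of precisely this connectedness-of-centralizers input: by working inside the commutative ring $\mathcal O_L[\gamma]$, it only ever needs Lang for abelian groups and then settles the centralizer comparison by a direct computation.
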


\begin{proof} Let $\gamma\in \mathcal{G}_M(\mathcal{O}_K)$. Consider the subalgebra $R_{\gamma} = \mathcal{O}_K[\gamma]\subset M$. Note that this is a finite free $\mathcal{O}_K$-algebra, so that the functor
\[
\mathcal{Z}_{\gamma}(R) = (R_{\gamma}\otimes_{\mathcal{O}_K} R)^{\times}
\]
describes a smooth commutative group scheme over $\mathcal{O}_K$.

\begin{lem} The norm homomorphism
\[\begin{aligned}
N : \mathcal{Z}_{\gamma}(\mathcal{O}_L)&\longrightarrow \mathcal{Z}_{\gamma}(\mathcal{O}_K)\\
\delta&\longmapsto \delta\delta^{\sigma}\cdots \delta^{\sigma^{r-1}}
\end{aligned}\]
is surjective.
\end{lem}

\begin{proof} The proof is identical to the proof of Lemma 3.4 of \cite{Scholze}.
\end{proof}

Hence, given $\overline{\gamma}\in \mathcal{G}_M(\mathcal{O}_K)/\mathcal{G}_{M,I}(\mathcal{O}_K)$, choose some lift $\gamma\in \mathcal{G}_M(\mathcal{O}_K)$ and $\delta\in \mathcal{Z}_{\gamma}(\mathcal{O}_L)$ with $N\delta = \gamma$, with reduction $\overline{\delta}\in \mathcal{G}_M(\mathcal{O}_L)/\mathcal{G}_{M,I}(\mathcal{O}_L)$. We claim that
\[
\{ x\in \mathcal{G}_M(\mathcal{O}_L)/\mathcal{G}_{M,I}(\mathcal{O}_L) \mid x^{-1}\overline{\delta} x^{\sigma} = \overline{\delta} \} = \{ x\in \mathcal{G}_M(\mathcal{O}_K)/\mathcal{G}_{M,I}(\mathcal{O}_K) \mid x^{-1}\overline{\gamma} x = \overline{\gamma}\} \ .
\]

Take $x\in \mathcal{G}_M(\mathcal{O}_L)/\mathcal{G}_{M,I}(\mathcal{O}_L)$ with $x^{-1}\overline{\delta} x^{\sigma} = \overline{\delta}$. Then $x^{-\sigma^i}\overline{\delta}^{\sigma^i} x^{\sigma^{i+1}} = \overline{\delta}^{\sigma^i}$ for all $i=0,\ldots,r-1$ and multiplying these equations gives
\[ x^{-1}N\overline{\delta} x = N\overline{\delta} \ ,\]
hence $x$ commutes with $\overline{\gamma}$, i.e. $x$ commutes with $\gamma$ modulo $I$. But then $x$ commutes with $\delta$ modulo $I$, because $\delta\in \mathcal{O}_L[\gamma]$, i.e. $x$ commutes with $\overline{\delta}$. Therefore $x^{-1}\overline{\delta} x = \overline{\delta} = x^{-1}\overline{\delta}x^{\sigma}$, and hence $x = x^{\sigma}$, whence $x\in \mathcal{G}_M(\mathcal{O}_K)/\mathcal{G}_{M,I}(\mathcal{O}_K)$. The other direction is clear.

This proves that the size of centralizers equals the size of $\sigma$-centralizers. Now a counting argument finishes the proof, cf. proof of Proposition 3.3 of \cite{Scholze}.
\end{proof}

We use this proposition to prove the following identity.

\begin{cor}\label{BCUnit} Let $f$ be a conjugation-invariant locally integrable function on $\mathcal{G}_{M}(\mathcal{O}_K)$. Then the function $\phi$ on $\mathcal{G}_{M}(\mathcal{O}_L)$ defined by $\phi(\delta)=f(N\delta)$ is locally integrable. Furthermore,
\[(e_{\mathcal{G}_{M,I}(\mathcal{O}_L)}\ast \phi)(\delta) = (e_{\mathcal{G}_{M,I}(\mathcal{O}_K)}\ast f)(N\delta)\]
for all $\delta\in \mathcal{G}_{M}(\mathcal{O}_L)$.
\end{cor}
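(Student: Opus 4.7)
The plan is to reduce the identity to the bijection of orbit spaces from Proposition~\ref{SigmaConjClasses}, via a direct computation of how $N$ interacts with the normal subgroup $K_L := \mathcal{G}_{M,I}(\mathcal{O}_L)$. Since $I$ is a two-sided ideal of $M$, $K_L$ is normal in $\mathcal{G}_M(\mathcal{O}_L)$, so $(e_{K_L}\ast\phi)(\delta)$ equals the average of $\phi$ over the left coset $K_L\delta$. The crucial algebraic computation is that for any $g\in K_L$, a short induction on $r$ (using normality of $K_L$) yields
\[
N(g\delta) = R(g)\cdot N\delta,\qquad R(g) = \prod_{j=0}^{r-1} P_j\, g^{\sigma^j}\, P_j^{-1}\in K_L,
\]
where $P_j = \delta\delta^{\sigma}\cdots\delta^{\sigma^{j-1}}$. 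Hence $\phi(g\delta) = f(R(g)\cdot N\delta)$, and the task reduces to comparing the averaging of $f(R(g)\cdot N\delta)$ over $g\in K_L$ with the averaging of $f(h^{-1}N\delta)$ over $h\in K_K$.

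For the local integrability of $\phi$, I would apply a Weyl integration argument to reduce integrability to maximal tori of $\mathcal{G}_M(\mathcal{O}_L)$. On such a torus, the restriction of $N$ is the algebraic norm map to the $\sigma$-fixed subtorus (exactly as in the proof of Proposition~\ref{SigmaConjClasses}, using the smooth commutative group scheme $\mathcal{Z}_{\gamma}$); this has controlled Jacobian, so local integrability of $f$ pulls back to that of $\phi = f\circ N$.

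For the identity itself, the plan is to decompose $f$ according to conjugacy classes in $\mathcal{G}_M(\mathcal{O}_K)$ (using that $f$ is a class function) and verify the identity class by class. By Proposition~\ref{SigmaConjClasses}, the conjugacy classes in $\mathcal{G}_M(\mathcal{O}_K)/K_K$ are in bijection, via $N$, with the $\sigma$-conjugacy classes in $\mathcal{G}_M(\mathcal{O}_L)/K_L$, and the matching of centralizer sizes in that proposition forces matching \emph{relative} measures on the corresponding orbits. Combined with the computation $N(g\delta) = R(g)\cdot N\delta$, this should yield the desired equality of averages.

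The main obstacle I anticipate is verifying that the map $g\mapsto R(g)$ from $K_L$ to $K_L$, when composed with projection to $\sigma$-conjugacy classes modulo $K_L$, pushes Haar measure correctly onto the measure induced on the $K_K$-side. The intended approach here is a Lang--Steinberg type argument on the finite quotient $\mathcal{G}_M(\mathcal{O}_L)/K_L$: the surjectivity of the norm homomorphism $\mathcal{Z}_\gamma(\mathcal{O}_L)\to \mathcal{Z}_\gamma(\mathcal{O}_K)$ already used in the proof of Proposition~\ref{SigmaConjClasses}, together with the centralizer-count matching proven there, should provide exactly the compatibility needed to conclude.
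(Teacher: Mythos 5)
Your main input, Proposition~\ref{SigmaConjClasses}, is indeed the engine of the proof, and the cocycle identity $N(g\delta) = R(g)\,N\delta$ with $R(g)=\prod_{j} P_j g^{\sigma^j} P_j^{-1}\in K_L$ is correct (normality of $K_L:=\mathcal{G}_{M,I}(\mathcal{O}_L)$ and $\sigma$-stability are exactly what make it land in $K_L$). But the obstacle you flag at the end is a genuine gap, and the remedy you propose cannot close it. A Lang--Steinberg style argument on the single finite quotient $\mathcal{G}_M(\mathcal{O}_L)/K_L$ only sees $\sigma$-conjugacy mod $K_L$; it identifies the coset $K_L\delta$ with $K_K N\delta$ at the level of orbit spaces, but the quantities you need to compare, $(e_{K_L}\ast\phi)(\delta)$ and $(e_{K_K}\ast f)(N\delta)$, are averages of $\phi$ and $f$ \emph{within} those cosets, and neither $\phi$ nor $f$ is constant there. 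Concretely, $g\mapsto R(g)$ is far from measure-preserving on $K_L$: its linearization at the identity is the twisted trace $x\mapsto \sum_j P_j x^{\sigma^j} P_j^{-1}$, whose image has infinite index in the Lie algebra over $\mathcal{O}_L$ as soon as $r>1$. So no pushforward argument staged at the single level $I$ can equate the two averages; one simply does not have enough combinatorial control inside $K_L\delta$.

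The paper resolves this by introducing a \emph{second} ideal. First reduce to $f$ locally constant, say invariant under $K_K':=\mathcal{G}_{M,I'}(\mathcal{O}_K)$ for a smaller two-sided ideal $I'\subset I$; then $\phi=f\circ N$ is automatically $K_L'$-invariant (so locally integrable — no Weyl integration is needed for this part), and the whole identity becomes a finite computation in $\mathcal{G}_M(\mathcal{O}_L)/K_L'$ and $\mathcal{G}_M(\mathcal{O}_K)/K_K'$. One then uses Proposition~\ref{SigmaConjClasses} twice: once for $I$, to see that both sides depend only on the $\sigma$-conjugacy class of $\delta$ mod $K_L$ (resp.\ the conjugacy class of $N\delta$ mod $K_K$), so one may as well average over that whole class; and once for $I'$, to match the $\sigma$-conjugacy classes mod $K_L'$ inside it with conjugacy classes mod $K_K'$, with proportional sizes (by the matching of centralizers, the class sizes differ by the constant factor $|\mathcal{G}_M(\mathcal{O}_L)/K_L'|\,/\,|\mathcal{G}_M(\mathcal{O}_K)/K_K'|$, which cancels). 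Since $\phi$ and $f$ take the same value on $N$-corresponding classes, the averages agree term by term. Finally one approximates a general locally integrable $f$ by locally constant ones. Your proposal omits both the reduction to locally constant $f$ and, crucially, the auxiliary ideal $I'$; without the latter, the counting argument needed to equate the two averages is not available.
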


\begin{proof} Assume first that $f$ is locally constant, say invariant by $\mathcal{G}_{M,I^{\prime}}(\mathcal{O}_K)$ for some ideal $I^{\prime}$. Of course, $\phi$ is then invariant under $\mathcal{G}_{M,I^{\prime}}(\mathcal{O}_L)$ as well and in particular locally integrable. The desired identity follows on combining the above Proposition for the ideals $I$ and $I^{\prime}$: Using it for $I$, we see that we may average over the $\sigma$-conjugacy class of $\delta$. Then we get the sum over all
\[
\delta^{\prime}\in \mathcal{G}_M(\mathcal{O}_L)/\mathcal{G}_{M,I^{\prime}}(\mathcal{O}_L)
\]
which are congruent to $\delta$ modulo $I$ of the averages of $\phi$, resp. $f$, over the $\sigma$-conjugacy class of $\delta^{\prime}$, resp. the conjugacy class of $N\delta^{\prime}$, which agree by the Proposition for $I^{\prime}$.

The corollary now follows by approximating $f$ by locally constant functions.
\end{proof}

Let tempered representations $\pi$, $\Pi$ of $G(K)$, $G(L)$, resp., be given. Further, let $\sigma\in \text{Gal}(L/K)$ be the lift of Frobenius. 

\begin{definition} In this situation, $\Pi$ is called a base-change lift of $\pi$ if $\Pi$ is invariant under $\emph{Gal}(L/K)$ and for some extension of $\Pi$ to a representation of $G(L)\rtimes \emph{Gal}(L/K)$, the identity
\[ \tr( Ng | \pi ) = \tr( (g,\sigma) | \Pi ) \]
holds for all $g\in G(L)$ such that the conjugacy class of $Ng$ is regular semisimple.
\end{definition}

It is known that there exist unique base-change lifts by the work of Arthur-Clozel, cf. \cite{ArthurClozel}. Write $\mathcal{Z}(G_1,G_2)$ for the center of the Hecke algebra of compactly supported $G_2$-biinvariant functions on $G_1$, for any parameters $G_1$, $G_2$ occuring in the sequel.

\begin{thm}\label{BaseChangeIdentity} Let $f\in \mathcal{Z}(G(K),\mathcal{G}_{M,I}(\mathcal{O}_K))$ and $\phi\in \mathcal{Z}(G(L),\mathcal{G}_{M,I}(\mathcal{O}_L))$ be given. For every tempered irreducible smooth representation $\pi$ of $G$ with base-change lift $\Pi$, the set of invariants $\pi^{\mathcal{G}_{M,I}(\mathcal{O}_K)}$ is nonzero only if $\Pi^{\mathcal{G}_{M,I}(\mathcal{O}_L)}$ is nonzero. Assume that for all $\pi$ with $\pi^{\mathcal{G}_{M,I}(\mathcal{O}_K)}\neq 0$, the scalars $c_{f,\pi}$ resp. $c_{\phi,\Pi}$ through which $f$ resp. $\phi$ act on $\pi^{\mathcal{G}_{M,I}(\mathcal{O}_K)}$ resp. $\Pi^{\mathcal{G}_{M,I}(\mathcal{O}_L)}$, agree: $c_{f,\pi} = c_{\phi,\Pi}$.

Then $f$ and $\phi$ have matching (twisted) orbital integrals.
\end{thm}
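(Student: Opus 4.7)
The plan is to reduce the matching of (twisted) orbital integrals to a trace identity on tempered representations, in the spirit of the comparison carried out for $\GL_2$ in \cite{Scholze}.

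First, I would invoke the density theorem of Kazhdan (in the twisted form established by Arthur--Clozel, \cite{ArthurClozel}): two functions $f$ on $G(K)$ and $\phi$ on $G(L)$ have matching (twisted) orbital integrals if and only if $\tr(f\mid\pi) = \tr((\phi,\sigma)\mid\Pi)$ for every tempered irreducible $\pi$ of $G(K)$ with base-change lift $\Pi$, the right-hand side being the twisted character with respect to a fixed Arthur--Clozel-normalized intertwiner on $\Pi$. This converts the theorem into a purely spectral statement.

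Second, I would exploit that $f$ is central in $\mathcal{H}(G(K),\mathcal{G}_{M,I}(\mathcal{O}_K))$: its action on $\pi$ vanishes outside $\pi^{\mathcal{G}_{M,I}(\mathcal{O}_K)}$ and acts there by the scalar $c_{f,\pi}$, so
\[
\tr(f\mid\pi) = c_{f,\pi}\cdot\dim\pi^{\mathcal{G}_{M,I}(\mathcal{O}_K)}.
\]
Symmetrically, when $\Pi^{\mathcal{G}_{M,I}(\mathcal{O}_L)}\neq 0$ the twisted trace of $\phi$ factors as $c_{\phi,\Pi}\cdot T(\Pi)$, where $T(\Pi)$ denotes the trace of the normalized $\sigma$-action on $\Pi^{\mathcal{G}_{M,I}(\mathcal{O}_L)}$. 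Under the hypothesis $c_{f,\pi}=c_{\phi,\Pi}$, the required identity collapses to the \emph{unit-element} base-change identity
\[
\dim \pi^{\mathcal{G}_{M,I}(\mathcal{O}_K)} = T(\Pi)
\]
whenever $\pi^{\mathcal{G}_{M,I}(\mathcal{O}_K)}\neq 0$. To establish this, I would apply Corollary \ref{BCUnit} to the constant function $1$ on $\mathcal{G}_M(\mathcal{O}_K)$; the resulting convolution identity expresses precisely that $e_{\mathcal{G}_{M,I}(\mathcal{O}_K)}$ and $e_{\mathcal{G}_{M,I}(\mathcal{O}_L)}$ have matching twisted orbital integrals via the norm map. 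Integrating the defining character identity $\tr(Ng\mid\pi) = \tr((g,\sigma)\mid\Pi)$ for the base-change lift against these idempotents then yields the desired equality of dimension and twisted trace.

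The main obstacle is handling those tempered $\pi$ for which $\pi^{\mathcal{G}_{M,I}(\mathcal{O}_K)}=0$: here $\tr(f\mid\pi)=0$, and one has to rule out the pathology $\Pi^{\mathcal{G}_{M,I}(\mathcal{O}_L)}\neq 0$ with nonzero twisted character, i.e.\ one really needs the equivalence $\pi^{\mathcal{G}_{M,I}(\mathcal{O}_K)}\neq 0 \iff \Pi^{\mathcal{G}_{M,I}(\mathcal{O}_L)}\neq 0$, of which only one direction is asserted by the theorem itself. I would derive the missing converse from the Arthur--Clozel description of unramified base change for $\GL_n$, exploiting its compatibility with parabolic induction and the Bernstein decomposition to transport $K_L$-invariants through the base-change correspondence back to $K$-invariants on $\pi$. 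This step is precisely where the argument uses the special structure of $\GL_n$ and, as the text emphasizes, fails for general reductive groups.
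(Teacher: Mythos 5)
Your overall plan — reduce to a trace identity via a density theorem, factor traces through centrality, and fall back on a ``unit-element'' base-change identity — is the skeleton of the paper's proof. But the step where you actually produce the unit-element identity is wrong: applying Corollary~\ref{BCUnit} to the constant function $1$ on $\mathcal{G}_M(\mathcal{O}_K)$ gives the tautology $(e_{\mathcal{G}_{M,I}(\mathcal{O}_L)}\ast 1)(\delta)=1=(e_{\mathcal{G}_{M,I}(\mathcal{O}_K)}\ast 1)(N\delta)$, which says nothing about orbital integrals of the idempotents and certainly does not deliver $\dim\pi^{\mathcal{G}_{M,I}(\mathcal{O}_K)}=\tr((e_{\mathcal{G}_{M,I}(\mathcal{O}_L)},\sigma)\mid\Pi)$. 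The correct move is to take $f$ in Corollary~\ref{BCUnit} to be the \emph{character} $\Theta_\pi$ of $\pi$ (locally integrable by \cite{ArthurClozel}, Prop.~2.2, so the hypothesis of the corollary is met) and evaluate the convolution identity at $\delta=1$: conjugation-invariance plus the defining base-change character relation $\Theta_\pi(N g)=\tr((g,\sigma)\mid\Pi)$ then give exactly $\tr(e_{\mathcal{G}_{M,I}(\mathcal{O}_K)}\mid\pi)=\tr((e_{\mathcal{G}_{M,I}(\mathcal{O}_L)},\sigma)\mid\Pi)$, which is the identity you need and is how the paper proceeds.

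Your final paragraph also overcomplicates things. You do not need the converse implication $\Pi^{\mathcal{G}_{M,I}(\mathcal{O}_L)}\neq 0\Rightarrow\pi^{\mathcal{G}_{M,I}(\mathcal{O}_K)}\neq 0$: the identity $\tr(e_{\mathcal{G}_{M,I}(\mathcal{O}_K)}\mid\pi)=\tr((e_{\mathcal{G}_{M,I}(\mathcal{O}_L)},\sigma)\mid\Pi)$ holds for every tempered $\pi$, so if $\pi^{\mathcal{G}_{M,I}(\mathcal{O}_K)}=0$ the right-hand side vanishes, and since $\phi$ is central it acts on $\Pi^{\mathcal{G}_{M,I}(\mathcal{O}_L)}$ by a scalar, forcing $\tr((\phi,\sigma)\mid\Pi)=c_{\phi,\Pi}\cdot\tr((e_{\mathcal{G}_{M,I}(\mathcal{O}_L)},\sigma)\mid\Pi)=0$ — even if $\Pi^{\mathcal{G}_{M,I}(\mathcal{O}_L)}\neq 0$. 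Finally, your appeal to an ``if and only if'' form of Kazhdan density is not how the reduction runs: one first produces a compactly supported $f'$ on $G(K)$ matching $\phi$ (\cite{ArthurClozel}, Prop.~3.1), compares $\tr((\phi,\sigma)\mid\Pi)=\tr(f'\mid\pi)$ via the twisted Weyl integration formula, deduces from Kazhdan's theorem that $f-f'$ has vanishing regular orbital integrals, and then passes from regular to all semisimple (twisted) orbital integrals via \cite{Clozel}, Prop.~7.2. Those last two citations are doing real work and should not be elided.
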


\begin{proof} First, because (twisted) characters are locally integrable, cf. \cite{ArthurClozel}, Proposition 2.2, we find that Corollary \ref{BCUnit} implies
\[ \tr( e_{\mathcal{G}_{M,I}(\mathcal{O}_K)} | \pi ) = \tr( (e_{\mathcal{G}_{M,I}(\mathcal{O}_L)},\sigma) | \Pi )\ ,\]
taking $f$ to be the character of $\pi$ and $\delta=1$. This implies that $\pi^{\mathcal{G}_{M,I}(\mathcal{O}_K)}\neq 0$ only if $\Pi^{\mathcal{G}_{M,I}(\mathcal{O}_L)}\neq 0$.

Further, we see that
\[
\tr ( f | \pi ) = c_{f,\pi} \tr( e_{\mathcal{G}_{M,I}(\mathcal{O}_K)} | \pi ) = c_{\phi,\Pi} \tr( (e_{\mathcal{G}_{M,I}(\mathcal{O}_L)},\sigma) | \Pi ) = \tr ( (\phi,\sigma) | \Pi )\ .
\]
We may find a function $f^{\prime}\in C_c^{\infty}(G(K))$ that has matching (twisted) orbital integrals with $\phi$, cf. \cite{ArthurClozel}, Proposition 3.1. This implies that $\tr( (\phi,\sigma) | \Pi ) = \tr( f^{\prime} | \pi )$ by the Weyl integration formula, cf. \cite{ArthurClozel}, p. 36, for the twisted version. Hence $\tr( f - f^{\prime} | \pi ) = 0$ for all tempered irreducible smooth representations $\pi$ of $G(K)$. By Kazhdan's density theorem, Theorem 1 in \cite{Kazhdan}, all regular orbital integrals of $f-f^{\prime}$ vanish. Hence $f$ and $\phi$ have matching regular (twisted) orbital integrals. By \cite{Clozel}, Prop. 7.2, all semi-simple (twisted) orbital integrals of $f$ and $\phi$ match.
\end{proof}

\section{The Geometry of some simple Shimura Varieties}

We briefly recall the construction of integral models for the Shimura varieties considered, as explained in \cite{Haines}, p. 597-600. As in the introduction, fix an imaginary quadratic field $k$ and a central division algebra $D$ over $k$ of dimension $n^2$ and an involution $\ast$ on $D$ of the second kind. Let $\mathbf{G}$ be the algebraic group over $\mathbb{Q}$ representing the following functor on $\mathbb{Q}$-algebras $R$:
\[
\mathbf{G}(R) = \{g\in (D\otimes_\mathbb{Q} R)^{\times} \mid g^{\ast}g\in R^{\times} \}\ .
\]
Then the kernel $\mathbf{G}_0$ of the map $\mathbf{G}\longrightarrow \mathbb{G}_m$ sending $g$ to $g^{\ast}g$ is an inner form of the unitary groups associated to the extension $k$ of $\mathbb{Q}$. We assume that an $\mathbb{R}$-algebra homomorphism
\[
h_0: \mathbb{C}\longrightarrow D_\mathbb{R}
\]
is given such that $h_0(z)^{\ast}=h_0(\overline{z})$ and the involution $x\longmapsto h_0(i)^{-1}x^{\ast}h_0(i)$ is positive. By restriction to $\mathbb{C}^{\times}$, it gives rise to
\[
h: R_{\mathbb{C}/\mathbb{R}} \mathbb{G}_m\longrightarrow \mathbf{G}_{\mathbb{R}}\ .
\]
One can describe the Shimura variety associated to $(\mathbf{G},h^{-1})$ as a moduli space of abelian varieties, of the type considered by Kottwitz in \cite{KottwitzPoints}, cf. also \cite{Haines}.

In the notation of that paper, let $B=D^{\mathrm{op}}$ and $V=D$ as a left $B$-module through right multiplication. Then one gets a natural identification $C=\mathrm{End}_B(V)=D$. We are given $h_0:\mathbb{C}\longrightarrow D_{\mathbb{R}} = C_{\mathbb{R}}$.

Further, there is some $\xi\in D^{\times}$ with $\xi^{\ast}=-\xi$ such that the involution $\ast_B$ on $B=D^{\mathrm{op}}$ given by $x^{\ast_B}=\xi x^{\ast} \xi^{-1}$ is positive: Simply take $\xi$ close to $h_0(i)$ in $D_{\mathbb{R}}$ to achieve the second condition. Then, defining the hermitian form $(\cdot,\cdot): V\times V\longrightarrow \mathbb{Q}$ by
\[
(x,y) = \mathrm{tr}_{k/\mathbb{Q}} \mathrm{tr}_{D/k} (x\xi y^{\ast})\ ,
\]
where $\tr_{D/k}$ is the reduced trace, we have $(bx,y)=(x,b^{\ast_B}y)$ for all $b\in B$. One can further assume that the form $(\cdot,h(i)\cdot)$ is positive definite, where we recall that $h(i)\in D$ acts by left multiplication on $V$.

To summarize, we have a simple $\mathbb{Q}$-algebra $B$ equipped with a positive involution $\ast_B$, a left $B$-module $V$ equipped with a $\ast_B$-hermitian form $(\cdot,\cdot)$, and a homomorphism
\[
h_0:\mathbb{C}\longrightarrow C_{\mathbb{R}} = \mathrm{End}_B(V)\otimes \mathbb{R}
\]
such that the form $(\cdot,h_0(i)\cdot)$ is positive-definite. This is precisely the basic set-up of \cite{KottwitzPoints}.

Using the homomorphism $h_0$, we get a decomposition of
\[
D_\mathbb{C} = V_1\oplus V_2
\]
under the action of $\mathbb{C} \otimes_{\mathbb{R}} \mathbb{C}\cong \mathbb{C}\times \mathbb{C}$. This decomposition is stable under right multiplication of $D_\mathbb{C}=D\otimes_{k,v} \mathbb{C}\oplus D\otimes_{k,v^{\ast}} \mathbb{C}$, where $v,v^{\ast}: k\longrightarrow \mathbb{C}$ are the two embeddings. We make the assumption that as a right $D_{\mathbb{C}}$-module, $V_1$ is isomorphic to
\[
W\oplus (W^{\ast})^{n-1}\ ,
\]
where $W$ and $W^{\ast}$ are the simple right modules for $D\otimes_{k,v} \mathbb{C}$ resp. $D\otimes_{k,v^{\ast}} \mathbb{C}$. Up to exchanging $v$ and $v^{\ast}$, this is equivalent to asking $\mathbf{G}_\mathbb{R}$ to be isomorphic to $\mathbf{GU}(1,n-1)$. In section I.7 of the book \cite{HarrisTaylor}, it is discussed how to achieve this situation.

These assumptions imply that the cocharacter
\[
\mu: \mathbb{G}_m\longrightarrow D_\mathbb{C}=D\otimes_{k,v} \mathbb{C}\oplus D\otimes_{k,v^{\ast}} \mathbb{C}\cong M_n(\mathbb{C})\oplus M_n(\mathbb{C})
\]
associated to $h$ is given by
\[
\mu(z) = \mathrm{diag}(z,1,\ldots,1)\times \mathrm{diag}(1,z,\ldots,z)\ .
\]
Then $\mu$ is already defined as a cocharacter
\[
\mu: \mathbb{G}_m\longrightarrow \mathbf{G}_k\ .
\]

Let $p$ be a prime which splits in $k$ as $p=\mathfrak{p}\overline{\mathfrak{p}}$ and such that $D$ splits at $\mathfrak{p}$ (and $\overline{\mathfrak{p}}$). We fix isomorphisms $k_\mathfrak{p}\cong k_{\overline{\mathfrak{p}}}\cong \mathbb{Q}_p$ and $D_{\mathfrak{p}}\cong M_n(\mathbb{Q}_p)$. This induces an isomorphism
\[
\mathbf{G}_{\mathbb{Q}_p}\cong \GL_n\times \mathbb{G}_m\ .
\]
Under the induced isomorphism
\[
\mathbf{G}_{k_{\mathfrak{p}}}\cong \mathbf{G}_{\mathbb{Q}_p}\cong \GL_n\times \mathbb{G}_m\ ,
\]
the cocharacter $\mu$ takes the form
\[
\mu(z) = \mathrm{diag}(z,1,\ldots,1)\times z\ .
\]

\begin{lem} The representation $r$ of the local $L$-group ${^L}\mathbf{G}_{k_\mathfrak{p}}\cong \GL_n(\mathbb{C})\times \mathbb{C}^{\times}\times W_{k_{\mathfrak{p}}}$ defined by Kottwitz in \cite{KottwitzLambdaAdic}, p. 656, is given by
\[\begin{aligned}
r: \GL_n(\mathbb{C})\times \mathbb{C}^{\times}\times W_{k_{\mathfrak{p}}}&\longrightarrow \GL_n(\mathbb{C}) \\
(g,x,\sigma)&\longmapsto (g^{-1})^t x^{-1}\ .
\end{aligned}\]
\end{lem}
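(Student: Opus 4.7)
The plan is to unpack Kottwitz's construction of $r$ from \cite{KottwitzLambdaAdic} and specialize it to the explicit cocharacter $\mu$ already computed above the lemma. There is essentially no geometry to do here; the content of the statement is a direct translation between the Shimura datum, via $\mu$, and an irreducible representation of $\hat{\mathbf{G}}$.

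First I would observe that since $\mathbf{G}_{\mathbb{Q}_p}\cong \GL_n\times \mathbb{G}_m$ is split over $k_{\mathfrak{p}}=\mathbb{Q}_p$, the Weil group $W_{k_{\mathfrak{p}}}$ acts trivially on the dual group $\hat{\mathbf{G}}=\GL_n(\mathbb{C})\times\mathbb{C}^{\times}$, and the $L$-group decomposes as a direct product. Thus $r$ is determined by its restriction $r_0$ to $\hat{\mathbf{G}}$, extended trivially along $W_{k_{\mathfrak{p}}}$, which already accounts for the factor $\sigma$ playing no role in the answer.

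Second, I would invoke Kottwitz's definition of $r_0$ as the irreducible algebraic representation of $\hat{\mathbf{G}}$ whose extremal weights form the Weyl orbit of $-\mu$, viewed as a character of the dual maximal torus under the standard self-duality of $\GL_n\times\mathbb{G}_m$. The sign reflects the use of $h^{-1}$ (rather than $h$) in the Shimura datum, and is also pinned down by the shape of the expected local $L$-factor of the Shimura variety. Using the explicit formula $\mu(z)=\mathrm{diag}(z,1,\ldots,1)\times z$, the character $-\mu$ of the dual torus is $(-1,0,\ldots,0)\times(-1)$.

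Third, I would identify this minuscule representation by inspection. The minuscule irreducible representation of $\GL_n(\mathbb{C})$ whose weights form the $S_n$-orbit of $(-1,0,\ldots,0)$ is the dual of the standard representation, $g\mapsto (g^{-1})^t$, while the character of $\mathbb{C}^{\times}$ of weight $-1$ is $x\mapsto x^{-1}$. Multiplying these yields $(g^{-1})^t x^{-1}$, as required. The only real obstacle is bookkeeping: one must carefully track the self-duality identification for $\GL_n$, the passage from cocharacters of $\mathbf{G}$ to characters of $\hat{\mathbf{G}}$, and the sign convention used by Kottwitz in associating an irreducible representation to $\mu$ (so that the answer is $-\mu$ rather than $\mu$). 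Once those conventions are aligned with \cite{KottwitzLambdaAdic}, the formula is immediate from the classification of minuscule representations of $\GL_n$.
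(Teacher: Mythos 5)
Your proposal is correct and is essentially the same argument the paper has in mind: the paper's proof simply asserts that the stated formula ``clearly satisfies both conditions (a) and (b) in \cite{KottwitzLambdaAdic}, p.~656,'' and your write-up is nothing more than an unpacking of that verification — (a) pins down $r|_{\hat{\mathbf G}}$ as the minuscule representation with extreme weight $-\mu = (-1,0,\ldots,0)\times(-1)$, hence $(g^{-1})^t x^{-1}$, and (b) is automatic because $\mathbf G_{k_{\mathfrak p}}$ is split, so $W_{k_{\mathfrak p}}$ acts trivially. No substantive difference.
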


\begin{proof} It clearly satisfies both conditions (a) and (b) in \cite{KottwitzLambdaAdic}, p. 656.
\end{proof}

It remains to fix the integral data. Note that the isomorphism $D_{\mathfrak{p}}\cong M_n(\mathbb{Q}_p)$ gives a natural lattice $M_n(\mathbb{Z}_p)\subset M_n(\mathbb{Q}_p)$, which extends to a unique self-dual lattice $\Lambda\subset V\otimes \mathbb{Q}_p$. The stabilizers of $\Lambda$ give rise to $\mathbb{Z}_{(p)}$-orders $\mathcal{O}_B$, resp. $\mathcal{O}_C$, in $B$, resp. $C$. Locally at $\mathfrak{p}$, we have an isomorphism $\mathcal{O}_{B,\mathfrak{p}}\cong M_n(\mathbb{Z}_p)^{\mathrm{op}}$. Using the involution $\ast_B$, this gives rise to an isomorphism
\[
\mathcal{O}_B\otimes\mathbb{Z}_p\cong M_n(\mathbb{Z}_p)^{\mathrm{op}}\oplus M_n(\mathbb{Z}_p)^{\mathrm{op}}\ ,
\]
such that $\ast_B$ is given by $(X,Y)\longmapsto (Y^t,X^t)$.

Fix a compact open subgroup $K^p$ of $\mathbf{G}(\mathbb{A}_f^p)$. We consider the following moduli scheme.

\begin{prop} Let $\mathfrak{M}$ be the functor from schemes over $\mathcal{O}_{k_{\mathfrak{p}}}\cong \mathbb{Z}_p$ to sets, which maps a locally noetherian scheme $S$ to the set of equivalence classes of abelian varieties $(A,\lambda,\iota,\overline{\eta}^p)$, where
\begin{enumerate}
\item $A$ is projective abelian scheme over $S$ up to prime-to-$p$-isogeny;
\item $\lambda$ is a polarization of $A$ of degree prime to $p$;
\item $\iota: \mathcal{O}_B\longrightarrow \mathrm{End}_S(A)$ such that the Rosati involution restricts to $\ast_B$ on $\mathcal{O}_B$ and the Kottwitz condition
\[
\mathrm{det}_{\mathcal{O}_S}(T-b | \mathrm{Lie} A) = \mathrm{det}_{D/k}(T-b | V_1)
\]
holds for all $b\in \mathcal{O}_B$, where $\mathrm{det}_{D/k}$ is the reduced norm for $D/k$. Note that this equation makes sense because $\mathcal{O}_k$ is mapped to $\mathcal{O}_S$ via $\mathcal{O}_k\longrightarrow \mathcal{O}_{k_{\mathfrak{p}}}\cong \mathbb{Z}_p\longrightarrow \mathcal{O}_S$;
\item $\overline{\eta}^p$ is a level-$K^p$-structure on $A$.
\end{enumerate}
Here, $(A,\lambda,\iota,\overline{\eta}^p)$ and $(A^{\prime},\lambda^{\prime},\iota^{\prime},\overline{\eta}^{p\prime})$ are said to be equivalent if there exists an $\mathcal{O}_B$-linear isogeny $\alpha: A\longrightarrow A^{\prime}$ of degree prime to $p$ such that $\alpha^{\ast}(\lambda^{\prime})=c\lambda$ for some $c\in \mathbb{Z}_{(p)}^{\times}$ and $\alpha_{\ast}(\overline{\eta}^p) = \overline{\eta}^{p\prime}$.

Then $\mathfrak{M}$ is representable by a smooth projective scheme $\mathcal{M}$ over $\mathbb{Z}_p$ of relative dimension $n-1$, if $K^p$ is sufficiently small. Furthermore, if $K=K_pK^p$ for the hyperspecial maximal compact subgroup
\[
K_p=\GL_n(\mathbb{Z}_p)\times \mathbb{Z}_p^{\times}\subset \GL_n(\mathbb{Q}_p)\times \mathbb{Q}_p^{\times}\cong \mathbf{G}(\mathbb{Q}_p)\ ,
\]
then there is an isomorphism
\[
\mathrm{Sh}_K\otimes_k k_{\mathfrak{p}}\cong \mathcal{M}\otimes_{\mathcal{O}_{k_{\mathfrak{p}}}} k_{\mathfrak{p}}\ .
\]
\end{prop}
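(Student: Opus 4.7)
The plan is to place the functor $\mathfrak{M}$ in the general PEL framework of \cite{KottwitzPoints} and verify the required properties piece by piece. For representability, the data $(B,\ast_B,V,(\cdot,\cdot),h_0)$ is a PEL datum in Kottwitz's sense, and $\mathfrak{M}$ is the associated moduli functor of abelian varieties with $\mathcal{O}_B$-action, prime-to-$p$ polarization, and level-$K^p$ structure. Mumford's theorem on moduli of polarized abelian varieties, combined with cutting out the additional $\mathcal{O}_B$-action and level structure inside a suitable Hilbert scheme, yields representability by a quasi-projective $\mathbb{Z}_p$-scheme $\mathcal{M}$, provided $K^p$ is sufficiently small to kill automorphisms of the moduli objects.

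For smoothness of relative dimension $n-1$, I would apply Grothendieck--Messing deformation theory at $p$. Using the isomorphism
\[
\mathcal{O}_B\otimes\mathbb{Z}_p \cong M_n(\mathbb{Z}_p)^{\mathrm{op}}\oplus M_n(\mathbb{Z}_p)^{\mathrm{op}}
\]
recorded above, together with Morita equivalence, the $p$-divisible group $A[p^\infty]$ with its $\mathcal{O}_B$-action decomposes according to the two factors, and the two summands are interchanged by the polarization. Hence deformations of $(A,\lambda,\iota,\overline{\eta}^p)$ are controlled by deformations of a single $p$-divisible group $\mathcal{H}$ of height $n$ equipped with its Hodge filtration. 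The Kottwitz determinant condition, explicated using the description of $V_1$ as $W\oplus(W^\ast)^{n-1}$, forces $\mathrm{Lie}\,\mathcal{H}$ to have rank $1$, so $\mathcal{H}$ has signature $(1,n-1)$. By Grothendieck--Messing the universal deformation space of such a pair is formally smooth of dimension $1\cdot(n-1)=n-1$, giving smoothness of the right relative dimension.

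Projectivity will follow from the valuative criterion. The N\'eron--Ogg--Shafarevich criterion ensures that a point of $\mathcal{M}$ over the fraction field of a DVR extends (after a finite unramified extension) to a point over the DVR itself, since the prime-to-$p$ level structure provides the required unramifiedness away from $p$ while the PEL structure at $p$ rigidifies the $p$-divisible group. Properness of $\mathcal{M}$ thereby reduces to the compactness of $\mathrm{Sh}_K(\mathbb{C})$, which holds because $D$ is a division algebra, so that $\mathbf{G}_0^{\mathrm{der}}$ is $\mathbb{Q}$-anisotropic and $\mathbf{G}$ has no proper $\mathbb{Q}$-parabolic subgroups modulo its center. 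Quasi-projectivity then upgrades to projectivity.

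Finally, for the comparison $\mathrm{Sh}_K\otimes_k k_\mathfrak{p}\cong \mathcal{M}\otimes_{\mathcal{O}_{k_\mathfrak{p}}} k_\mathfrak{p}$, I would identify $\mathcal{M}(\mathbb{C})$ with the usual double coset description of $\mathrm{Sh}_K(\mathbb{C})$ attached to $(\mathbf{G},h^{-1})$ via the standard PEL dictionary (as in \cite{KottwitzPoints}), and then invoke uniqueness of canonical models over the reflex field together with base change to $k_\mathfrak{p}$. The hard part will be the smoothness/dimension step: one must carefully trace through the Morita equivalence and the Kottwitz determinant condition to see that the local deformation problem produces exactly the $(1,n-1)$ signature and hence relative dimension $n-1$. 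The remaining points --- representability, projectivity, and the generic fibre comparison --- follow standard patterns from \cite{KottwitzPoints}.
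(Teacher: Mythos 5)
Your sketch of representability, smoothness via Grothendieck--Messing, and projectivity via the valuative criterion and anisotropy of $\mathbf{G}^{\mathrm{der}}$ is essentially a recapitulation of what is done in \cite{KottwitzPoints}, which is the right strategy, and the paper indeed reduces the whole statement to a citation of that work. However, there is one genuine gap in your argument for the final comparison $\mathrm{Sh}_K\otimes_k k_{\mathfrak{p}}\cong \mathcal{M}\otimes_{\mathcal{O}_{k_{\mathfrak{p}}}} k_{\mathfrak{p}}$, and it is precisely the point that the paper's own proof actually spends its time on. The ``standard PEL dictionary'' from \cite{KottwitzPoints} does \emph{not} identify $\mathcal{M}(\mathbb{C})$ with a single double coset space $\mathbf{G}(\mathbb{Q})\backslash X\times\mathbf{G}(\mathbb{A}_f)/K$; it identifies it with a disjoint union of $|\ker^1(\mathbb{Q},\mathbf{G})|$ copies of it, indexed by the locally trivial forms of $\mathbf{G}$. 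To conclude that $\mathcal{M}$ is isomorphic to $\mathrm{Sh}_K$ itself, rather than a disjoint union of several copies, you must verify that $\ker^1(\mathbb{Q},\mathbf{G})=1$.

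This vanishing is the one nontrivial verification the paper carries out: for $n$ even it is automatic by the discussion in Section~7 of \cite{KottwitzPoints}; for $n$ odd one uses the isomorphism $\ker^1(\mathbb{Q},Z(\mathbf{G}))\xrightarrow{\ \sim\ }\ker^1(\mathbb{Q},\mathbf{G})$ and the fact that in this case $Z(\mathbf{G})\cong\mathrm{Res}_{k/\mathbb{Q}}\mathbb{G}_m$, whose degree-one Galois cohomology over every completion of $\mathbb{Q}$ vanishes by Hilbert~90, so $\ker^1$ is trivial. Without this step your ``identify $\mathcal{M}(\mathbb{C})$ with the usual double coset description'' is not justified, and the asserted isomorphism of schemes could fail (the moduli scheme would instead have several connected components not accounted for by $\mathrm{Sh}_K$). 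The rest of your outline is sound and matches the structure of the argument in \cite{KottwitzPoints}, though note that the signature/dimension count from the Kottwitz determinant condition is also already worked out there, so it needn't be redone.
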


\begin{proof} This follows from \cite{KottwitzPoints}, once one checks that $\ker^1(\mathbb{Q},\mathbf{G})=1$. By the discussion of Section 7 of \cite{KottwitzPoints}, this is automatic if $n$ is even; for $n$ odd, one has $\ker^1(\mathbb{Q},Z(\mathbf{G}))\buildrel \sim \over \longrightarrow \ker^1(\mathbb{Q},\mathbf{G})$, where $Z(\mathbf{G})$ is the center of $\mathbf{G}$. But, as explained there, for $n$ odd, the center of $Z(\mathbf{G})$ is isomorphic to $\mathrm{Res}_{k/\mathbb{Q}} \mathbb{G}_m$, which has trivial cohomology.
\end{proof}

Associated to $A$ we get a $p$-divisible group $X_A$, which decomposes under the action of
\[
\mathcal{O}_B\otimes \mathbb{Z}_p\cong M_n(\mathbb{Z}_p)^{\mathrm{op}}\times M_n(\mathbb{Z}_p)^{\mathrm{op}}
\]
as
\[
X_A=X_\mathfrak{p}^n\times X_{\overline{\mathfrak{p}}}^n\ .
\]
Furthermore, $X_\mathfrak{p}$ is a $p$-divisible group of dimension $1$ and height $n$ because of the Kottwitz condition and the assumption of signature $(1,n-1)$. Hence the notion of a Drinfeld level structure on $X_\mathfrak{p}$ makes sense.

\begin{prop} Fix $m\geq 1$. Let $\mathfrak{M}_{\Gamma(p^m)}$ be the Galois covering of $\mathfrak{M}$ which parametri\-zes Drinfeld level-$p^m$-structures on $X_\mathfrak{p}$, i.e. sections $P_1,\ldots,P_n: S\longrightarrow X_{\mathfrak{p}}$ such that
\[
X_{\mathfrak{p}}[p^m] = \sum_{(i_1,\ldots,i_n)\in (\mathbb{Z}/p^m\mathbb{Z})^n} [i_1P_1+\ldots +i_nP_n]
\]
as relative Cartier divisors. Then $\mathfrak{M}_{\Gamma(p^m)}$ is representable by a projective scheme $\mathcal{M}_{\Gamma(p^m)}$, if $K^p$ is sufficiently small. Further, if $K=K_pK^p$, where $K_p$ is the compact open subgroup
\[
(1+p^mM_n(\mathbb{Z}_p))\times \mathbb{Z}_p^{\times}\subset \GL_n(\mathbb{Q}_p)\times \mathbb{Q}_p^{\times}\cong \mathbf{G}(\mathbb{Q}_p)\ ,
\]
then
\[
\mathrm{Sh}_K\otimes_k k_{\mathfrak{p}}\cong \mathcal{M}_{\Gamma(p^m)}\otimes_{\mathcal{O}_{k_{\mathfrak{p}}}} k_{\mathfrak{p}}\ .
\]
\end{prop}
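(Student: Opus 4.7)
The plan has two parts: first, representability (and projectivity) of $\mathfrak{M}_{\Gamma(p^m)}$, and second, identification of its generic fibre with $\mathrm{Sh}_K\otimes_k k_{\mathfrak{p}}$.

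For representability I would apply Drinfeld's theorem on level structures for one‑dimensional $p$-divisible groups. By construction $X_{\mathfrak{p}}$ is a $p$-divisible group of dimension $1$ and height $n$ over the smooth projective scheme $\mathcal{M}$ of the previous proposition. In this situation the functor of Drinfeld level‑$p^m$‑structures is represented by a finite scheme over the base, as explained in Chapter II.2 of \cite{HarrisTaylor}. Applied to $X_{\mathfrak{p}}/\mathcal{M}$, this gives that the structure morphism $\mathcal{M}_{\Gamma(p^m)}\longrightarrow \mathcal{M}$ is finite, hence $\mathcal{M}_{\Gamma(p^m)}$ is projective over $\mathbb{Z}_p$ by composition with the projective structural morphism of $\mathcal{M}$.

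For the identification of the generic fibre I would restrict to $k_{\mathfrak{p}}\cong\mathbb{Q}_p$, where $X_{\mathfrak{p}}[p^m]$ is étale; in particular a Drinfeld level‑$p^m$‑structure on $X_{\mathfrak{p}}$ reduces to an isomorphism of étale group schemes $(\mathbb{Z}/p^m\mathbb{Z})^n\xrightarrow{\sim} X_{\mathfrak{p}}[p^m]$. The preceding proposition has already established $\mathrm{Sh}_{K_p^{\mathrm{hs}} K^p}\otimes_k k_{\mathfrak{p}}\cong \mathcal{M}\otimes_{\mathcal{O}_{k_{\mathfrak{p}}}} k_{\mathfrak{p}}$ for the hyperspecial subgroup $K_p^{\mathrm{hs}}=\GL_n(\mathbb{Z}_p)\times \mathbb{Z}_p^{\times}$, so it suffices to show that the extra data of such an isomorphism amounts to refining the level at $p$ from $K_p^{\mathrm{hs}}$ to $(1+p^m M_n(\mathbb{Z}_p))\times \mathbb{Z}_p^{\times}$. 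This is a direct matter of Morita theory: under the decomposition
\[
\mathcal{O}_B\otimes\mathbb{Z}_p\cong M_n(\mathbb{Z}_p)^{\mathrm{op}}\oplus M_n(\mathbb{Z}_p)^{\mathrm{op}}\ ,
\]
$X_A$ splits as $X_{\mathfrak{p}}^n\times X_{\overline{\mathfrak{p}}}^n$ and the polarization $\lambda$ identifies $X_{\overline{\mathfrak{p}}}$ with the Cartier dual of $X_{\mathfrak{p}}$, so that a full $\mathcal{O}_B\otimes\mathbb{Z}_p$‑linear symplectic similitude trivialization of the $p$-adic Tate module modulo $p^m$ is equivalent, up to the similitude scalar in $\mathbb{Z}_p^{\times}$ (which is absorbed by the $\mathbb{Z}_{(p)}^{\times}$‑scaling of $\lambda$ in the moduli equivalence relation), to an isomorphism $(\mathbb{Z}/p^m\mathbb{Z})^n\xrightarrow{\sim} X_{\mathfrak{p}}[p^m]$.

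I expect Step 1 to be routine, being a direct citation of Drinfeld/Harris–Taylor. The main technical point is the bookkeeping in Step 2: carefully matching the equivalence relation on $(A,\lambda,\iota,\overline{\eta}^p)$ augmented by a Drinfeld level structure on $X_{\mathfrak{p}}$ with the equivalence relation defining $\mathrm{Sh}_{K_pK^p}$ for $K_p=(1+p^m M_n(\mathbb{Z}_p))\times\mathbb{Z}_p^{\times}$, in particular tracking how the similitude factor $\mathbb{Z}_p^{\times}$ disappears into the $\mathbb{Z}_{(p)}^{\times}$‑scaling of $\lambda$ and how the Morita equivalence turns the $X_{\mathfrak{p}}^n$‑factor into a single $\GL_n$‑level datum.
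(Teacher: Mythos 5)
Your proposal is correct, and it follows essentially the same route as the paper: the paper's own proof is simply a pointer to Harris--Taylor sections III.1 and III.4, and what you have written is a faithful summary of the argument found there (Drinfeld's finiteness result for level structures on one-dimensional $p$-divisible groups for representability/projectivity, and Morita theory together with the splitting $X_A\cong X_{\mathfrak{p}}^n\times X_{\overline{\mathfrak{p}}}^n$ plus $\lambda$-duality to match the level datum at $p$ with the subgroup $(1+p^m M_n(\mathbb{Z}_p))\times \mathbb{Z}_p^{\times}$ on the generic fibre). No gap; you have merely unfolded the citation.
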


\begin{proof} We refer to the discussion in the book of Harris-Taylor, \cite{HarrisTaylor}, sections III.1 and III.4.
\end{proof}

For any direct summand $H$ of $(\mathbb{Z}/p^m\mathbb{Z})^n$, let $\mathcal{M}_{\Gamma(p^m)}^H$ be the reduced subscheme of the closed subscheme of $\mathcal{M}_{\Gamma(p^m)}$ where
\[
\sum_{(i_1,\ldots,i_n)\in H\subset (\mathbb{Z}/p^m\mathbb{Z})^n} [i_1P_1+\ldots+i_nP_n] = |H|[e]\ .
\]

The following theorem is known to the experts, and is easy to deduce from the explicit description of the completed local rings of $\mathcal{M}_{\Gamma(p^m)}$ also used by Yoshida in \cite{Yoshida}.

\begin{thm}\label{SpecialFiberShimuraVariety}\begin{enumerate}
\item[(i)] For all $H$, the scheme $\mathcal{M}_{\Gamma(p^m)}^H$ is regular.
\item[(ii)] The special fibre of $\mathcal{M}_{\Gamma(p^m)}$ is the union of $\mathcal{M}_{\Gamma(p^m)}^H$ over all $H\neq 0$.
\item[(iii)] $\mathcal{M}_{\Gamma(p^m)}^{H_1}\subset \mathcal{M}_{\Gamma(p^m)}^{H_2}$ if and only if $H_2\subset H_1$.
\item[(iv)] This induces a stratification
\[
\mathcal{M}_{\Gamma(p^m)} = \bigcup_H \mathring{\mathcal{M}}_{\Gamma(p^m)}^H
\]
into locally closed strata
\[
\mathring{\mathcal{M}}_{\Gamma(p^m)}^H = \mathcal{M}_{\Gamma(p^m)}^H\setminus \bigcup_{H\subsetneq H^{\prime}} \mathcal{M}_{\Gamma(p^m)}^{H^{\prime}}\ .
\]
\end{enumerate}
\end{thm}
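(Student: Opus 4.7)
The plan is to reduce all four assertions to a single computation of the completed local rings of $\mathcal{M}_{\Gamma(p^m)}$ at closed points of the special fibre, and then read off (i)--(iv) directly from an explicit regular system of parameters. The input is Drinfeld's theorem on moduli of $1$-dimensional formal groups with Drinfeld level structure. Concretely, I would fix $x\in \mathcal{M}_{\Gamma(p^m)}(\overline{\mathbb{F}}_p)$; by Serre--Tate, deformations of $(A,\lambda,\iota,\overline{\eta}^p)$ at $x$ are equivalent to deformations of the $p$-divisible group $X_A$ with its PEL structure. The decomposition $X_A=X_{\mathfrak{p}}^n\times X_{\overline{\mathfrak{p}}}^n$ together with the involution $\ast_B$ (which identifies $X_{\overline{\mathfrak{p}}}$ with the Serre dual of $X_{\mathfrak{p}}$) reduces this further to deforming the single $1$-dimensional height $n$ $p$-divisible group $X_{\mathfrak{p}}$ together with its Drinfeld level-$p^m$ structure $P_1,\ldots,P_n$. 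Let $h$ denote the height of the connected part $X_{\mathfrak{p}}^{\circ}$ at $x$.

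Next I would invoke Drinfeld's theorem (Katz--Mazur, Ch.~2; see also \cite{HarrisTaylor}, Ch.~II, used in \cite{Yoshida}). After reordering the $P_i$ so that $P_1,\ldots,P_h$ have trivial image in the étale quotient of $X_{\mathfrak{p}}$ at $x$, these sections form a Drinfeld basis of $X_{\mathfrak{p}}^{\circ}[p^m]$, while $P_{h+1},\ldots,P_n$ lift uniquely from their étale values. Letting $X_i$ be the value of a fixed formal parameter on $X_{\mathfrak{p}}^{\circ}$ at $P_i$ (for $i>h$, after subtracting the unique étale lift), the theorem yields an isomorphism
\[
\widehat{\mathcal{O}}_{\mathcal{M}_{\Gamma(p^m)},\,x} \;\cong\; W(\overline{\mathbb{F}}_p)[[X_1,\ldots,X_n]]\big/(f),
\]
in which $X_1,\ldots,X_n$ is a regular system of parameters, $f\equiv u\cdot X_1\cdots X_h\pmod{p}$ for a unit $u$, and the locus $\{P_i=0\}$ coincides with $\{X_i=0\}$ for each $i\le h$. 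In particular the special fibre is locally $\{X_1\cdots X_h=0\}$.

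With these coordinates, (i)--(iv) follow directly. For any direct summand $H\subset(\mathbb{Z}/p^m\mathbb{Z})^n$, the defining condition of $\mathcal{M}_{\Gamma(p^m)}^H$ forces every section of the form $i_1P_1+\cdots+i_nP_n$, $(i_j)\in H$, to vanish at $x$; since only sections in $H_x:=\langle e_1,\ldots,e_h\rangle$ vanish at $x$, this is equivalent to $H\subset H_x$. For such $H$ with basis $\{e_{i_1},\ldots,e_{i_r}\}\subset\{e_1,\ldots,e_h\}$, the reduced subscheme $\mathcal{M}_{\Gamma(p^m)}^H$ is locally cut out by $X_{i_1}=\cdots=X_{i_r}=0$; these lie in a regular system of parameters, so $\mathcal{M}_{\Gamma(p^m)}^H$ is regular, proving (i). The factorisation $\{X_1\cdots X_h=0\}=\bigcup_i\{X_i=0\}$ of the special fibre gives (ii); the inclusion pattern of coordinate subvarieties reverses the inclusion of indexing sets, giving (iii); and (iv) is then formal. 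The main obstacle is the local ring isomorphism displayed above: one must verify that the formal parameters of the Drinfeld sections assemble into a regular system satisfying the desired product-equals-$p$-mod-units relation. This is the core content of Drinfeld's theorem combined with the Serre--Tate reduction from PEL abelian varieties to the single group $X_{\mathfrak{p}}$.
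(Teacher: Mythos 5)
Your overall strategy (Serre--Tate reduction to the $p$-divisible group $X_{\mathfrak p}$, then Drinfeld's theorem for the deformation ring with Drinfeld level structure, then reading off the strata from explicit local coordinates) is exactly the paper's, and the reduction steps are fine. However, the central local-structure claim you make is incorrect, and the gap propagates into your proofs of (ii) and (iii).

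You assert that the completed local ring has the form $W(\overline{\mathbb{F}}_p)[[X_1,\ldots,X_n]]/(f)$ with $f\equiv u\cdot X_1\cdots X_h\pmod p$, so that the reduced special fibre is locally $\bigcup_{i=1}^{h}\{X_i=0\}$, i.e.\ has exactly $h$ irreducible components through $x$. That is not what Drinfeld's theorem gives. Writing $\Sigma$ for the universal formal group law and $x_a=[a_1](X_1)+_\Sigma\cdots+_\Sigma[a_h](X_h)$ for $a\in(\mathbb{Z}/p^m\mathbb{Z})^h$, the Drinfeld divisor condition forces $[p^m](T)=(\text{unit})\cdot\prod_a(T-x_a)$, and comparing the coefficient of $T$ gives $p^m=(\text{unit})\cdot\prod_{a\neq 0}x_a$. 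Hence the special fibre is locally $V\bigl(\prod_{a\neq 0}x_a\bigr)$, whose reduced components are the $V(x_a)$ as $a$ runs over primitive vectors up to scalar, i.e.\ there is one component for every rank-$1$ direct summand of $(\mathbb{Z}/p^m\mathbb{Z})^h$. That count is $p^{(m-1)(h-1)}\cdot\frac{p^h-1}{p-1}$, not $h$ (already for $h=2$, $m=1$ one gets $p+1$ components, as in the classical supersingular picture for modular curves). The components $\{X_i=0\}$ are only the ones corresponding to the coordinate lines $\langle e_i\rangle$; your derivation of (ii) from ``$\{X_1\cdots X_h=0\}=\bigcup_i\{X_i=0\}$'' therefore establishes an inclusion, not the equality, and misses most of the $\mathcal{M}^H_{\Gamma(p^m)}$ with $H$ of rank $1$.

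Relatedly, your treatment of a general direct summand is restricted to $H$ having a basis among the standard vectors $\{e_1,\ldots,e_h\}$, which is far from generic. To handle arbitrary $H\subset(\mathbb{Z}/p^m\mathbb{Z})^h$ you must use the $\GL_n(\mathbb{Z}/p^m\mathbb{Z})$-action on $\mathcal{M}_{\Gamma(p^m)}$ (permuting strata according to $g\cdot\mathcal{M}^H=\mathcal{M}^{gH}$) to conjugate $H$ into the standard position $(\mathbb{Z}/p^m\mathbb{Z})^j\oplus 0$; this is exactly what the paper does. Finally, the identification of $\mathcal{M}^H_{\Gamma(p^m)}$ (defined by an equality of Cartier divisors, $\sum_{a\in H}[\sum i_jP_j]=|H|[e]$) with a coordinate subscheme $\{X_{i_1}=\cdots=X_{i_r}=0\}$ requires an explicit argument on the reduced locus: the divisor identity forces all elementary symmetric functions of the $x_a$, $a\in H$, to vanish, hence $x_a=0$ on the reduced subscheme, hence $X_i=0$ for those $i$; and conversely. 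You gesture at this as ``the core content of Drinfeld's theorem'', but it does not follow formally from the purported local model, and it is the step that actually proves (i) and (iii) once the $\GL_n$-reduction is in place.
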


\begin{proof} We may pass to the formal complection at a geometric point $x$ of the special fibre. Then we get a deformation space of the $p$-divisible group $X_\mathfrak{p}$ of dimension $1$ and height $n$: Deforming $(A,\iota,\lambda,\overline{\eta}^p,P_1,\ldots,P_n)$ is the same as deforming $(A[p^{\infty}],\iota,\lambda,P_1,\ldots,P_n)$ by the Serre-Tate theorem and rigidity of level structures away from $p$. As $\lambda: A[\mathfrak{p}^{\infty}]\cong A[\overline{\mathfrak{p}}^{\infty}]^{\vee}$, this is the same as deforming $(A[\mathfrak{p}^{\infty}],\iota|_{M_n(\mathbb{Z}_p)^{\mathrm{op}}},P_1,\ldots,P_n)$. But this reduces to deforming $(X_\mathfrak{p},P_1,\ldots,P_n)$, as $A[\mathfrak{p}^{\infty}]\cong X_\mathfrak{p}^n$ through the action of $\iota|_{M_n(\mathbb{Z}_p)^{\mathrm{op}}}$.

Let $X_\mathfrak{p} = X_{\mathrm{inf}}\times X_{\mathrm{et}}$ be the decomposition into \'{e}tale and infinitesimal part, where $X_{\mathrm{inf}}$ has height $k$. Applying some element of $\GL_n(\mathbb{Z}/p^m\mathbb{Z})$ if necessary, we can assume that $P_1=\ldots=P_k=e$ and $P_{k+1},\ldots,P_n$ generate $X_{\mathrm{et}}$. Then the deformation space of $(X_\mathfrak{p},P_1,\ldots,P_n)$ maps to the deformation space of $(X_{\mathrm{inf}},P_1,\ldots,P_k)$ and this map is formally smooth, as shown in \cite{HarrisTaylor}, p.80. If $x$ lies in $\mathcal{M}_{\Gamma(p^n)}^H$, then necessarily $H\subset (\mathbb{Z}/p^m\mathbb{Z})^k \oplus 0^{n-k}$. Further, these strata $\mathcal{M}_{\Gamma(p^m)}^H$ are pullbacks of the corresponding strata for the deformation space of $(X_{\mathrm{inf}},P_1,\ldots,P_k)$, as their equations only involve the deformation of $X_{\mathrm{inf}}$ and $P_1,\ldots,P_k$. We are reduced to the case of the deformation of a $1$-dimensional formal group $X_{\mathrm{inf}}$ with Drinfeld-level-structure $P_1,\ldots,P_k$.

By the results of Drinfeld, \cite{Drinfeld}, Proposition 4.3, the deformation ring $R$ of
\[
(X_{\mathrm{inf}},P_1,\ldots,P_k)
\]
is a complete regular local ring with parameters $X_1,\ldots,X_k$. There is a group structure $+_{\Sigma}$ on the maximal ideal $\mathfrak{m}$ of $R$ given by the universal deformation $\Sigma$ of the formal group law of $X_{\mathrm{inf}}$, after the choice of a formal parameter on the universal deformation of $X_{\mathrm{inf}}$. We may assume that $H=(\mathbb{Z}/p^m\mathbb{Z})^j\oplus 0^{n-j}\subset (\mathbb{Z}/p^m\mathbb{Z})^n$ for some $j\leq k$. The condition
\[
|H|[e] = \sum_{i_1,\ldots,i_j\in \mathbb{Z}/p^m\mathbb{Z}} [i_1P_1+\ldots +i_jP_j]
\]
is equivalent to
\[
T^{p^{mj}} = (\mathrm{unit}) \prod_{i_1,\ldots,i_j\in \mathbb{Z}/p^m\mathbb{Z}} (T-([i_1](X_1)+_{\Sigma}\ldots+_{\Sigma}[i_j](X_j)))
\]
as power series in the formal variable $T$. It follows that all symmetric polynomials in the variables $x_{i_1,\ldots,i_j} = [i_1](X_1)+_{\Sigma}\ldots+_{\Sigma}[i_j](X_j)$ vanish, hence on the reduced subscheme all $x_{i_1,\ldots,i_j}$ vanish. This implies $X_1=\ldots=X_j=0$ on the reduced closed subscheme, and conversely if $X_1=\ldots=X_j=0$, then $|H|[e] = \sum [i_1P_1+\ldots +i_jP_j]$. Because $X_1,\ldots,X_k$ form a regular sequence, the closed subscheme given by $X_1=\ldots=X_j=0$ is regular and in particular reduced. This describes the closed subscheme corresponding to $H$ locally and claims (i)-(iii) are clear.

For claim (iv), it is enough to check that for any point $x\in \mathcal{M}_{\Gamma(p^m)}$, there is a unique maximal $H$ such that $x\in \mathcal{M}_{\Gamma(p^m)}^H$. This is also clear from our description.
\end{proof}

\section{Description of isogeny classes}

We recall some results from \cite{KottwitzPoints} that we will need. In that paper, Kottwitz associates to any point $x\in \mathcal{M}(\mathbb{F}_{p^r})$ a triple $(\gamma_0,\gamma,\delta)$ consisting of
\begin{enumerate}
\item a semisimple element $\gamma_0\in \mathbf{G}(\mathbb{Q})$, elliptic in $\mathbf{G}(\mathbb{R})$ and well-defined up to $\mathbf{G}(\overline{\mathbb{Q}})$-conjugation;
\item an element $\gamma\in \mathbf{G}(\mathbb{A}_f^p)$, well-defined up to conjugacy;
\item an element $\delta\in \mathbf{G}(\mathbb{Q}_{p^r})$, well-defined up to $\sigma$-conjugacy;
\end{enumerate}
satisfying certain compatibilities, e.g. the image of $\gamma_0$ in $\mathbf{G}(\mathbb{Q}_{\ell})$ is stably conjugate to the $\ell$-adic component of $\gamma$ and the image of $\gamma_0$ in $\mathbf{G}(\mathbb{Q}_p)$ is stably conjugate to $N\delta$. Roughly, these parametrize $\mathbb{F}_{p^r}$-isogeny classes in $\mathcal{M}(\mathbb{F}_{p^r})$, cf. \cite{KottwitzPoints} for precise statements.

Let us briefly recall how $\delta$ is defined, as this is what is used below, cf. \cite{KottwitzPoints}, p. 419. Let $x$ correspond to $(A,\iota,\lambda,\overline{\eta}^p)$. The dual of the rational crystalline cohomology
\[
H_p=\mathrm{Hom}(H^1_{\mathrm{cris}}(A/\mathbb{Z}_{p^r}), \mathbb{Q}_{p^r})
\]
has an action of $B$, a semilinear Frobenius $F$ and a principal polarization $\lambda$. As shown in \cite{KottwitzPoints}, p.430, there is an isomorphism of $B$-modules $H_p\cong V\otimes_{\mathbb{Q}} \mathbb{Q}_{p^r}$, preserving the hermitian forms up to a scalar. Choosing such an isomorphism, $F$ is mapped to an endomorphism $\delta\sigma$ for some skew-hermitian $B$-linear automorphism $\delta$ of $V\otimes_{\mathbb{Q}} \mathbb{Q}_{p^r}$, i.e. $\delta\in \mathbf{G}(\mathbb{Q}_{p^r})$, which is well-defined up to $\sigma$-conjugacy.

In the following lemma, this is computed more directly.

\begin{lem}\label{CompDelta} Let $x\in \mathcal{M}(\mathbb{F}_{p^r})$. One has the associated $p$-divisible group $X_{\mathfrak{p},x}$ and hence a (contravariant) rational Dieudonn\'{e} module $N_x$, abstractly isomorphic to $\mathbb{Q}_{p^r}^n$. Fixing such an isomorphism, the Frobenius operator $F$ takes the form $\delta_0\sigma$, and under the isomorphism
\[
\mathbf{G}_{\mathbb{Q}_p}\cong \GL_n\times \mathbb{G}_m\ ,
\]
the element $\delta$ is sent to $((\delta_0^{-1})^t,p^{-1})$. Moreover, $\delta_0$, resp. $\delta$, is uniquely determined as an element of $\GL_n(\mathbb{Q}_{p^r})\cap M_n(\mathbb{Z}_{p^r})$, resp. $\GL_n(\mathbb{Q}_{p^r})\times \mathbb{Q}_{p^r}^{\times}$, up to $\GL_n(\mathbb{Z}_{p^r})$-$\sigma$-conjugation, resp. $\mathbf{G}(\mathbb{Z}_{p^r})$-$\sigma$-conjugation.
\end{lem}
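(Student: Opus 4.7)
Plan. The strategy is to unravel Kottwitz's construction of $\delta$ in concrete Dieudonn\'e-theoretic terms via Morita equivalence, then compare with the splitting $\mathbf{G}_{\mathbb{Q}_p} \cong \GL_n \times \mathbb{G}_m$ to read off the explicit formula.

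First, I would identify $H_p$ with the covariant rational Dieudonn\'e module of $X_A$. Under the decomposition $\mathcal{O}_B \otimes \mathbb{Z}_p \cong M_n(\mathbb{Z}_p)^{\mathrm{op}} \times M_n(\mathbb{Z}_p)^{\mathrm{op}}$ corresponding to $X_A = X_\mathfrak{p}^n \times X_{\overline{\mathfrak{p}}}^n$, this gives a decomposition $H_p = H_{p,\mathfrak{p}} \oplus H_{p,\overline{\mathfrak{p}}}$, and Morita equivalence identifies each summand with the $n$-th power of the rational Dieudonn\'e module $N_?$ of $X_?$, so $H_{p,\mathfrak{p}} \cong N_x^n$ and $H_{p,\overline{\mathfrak{p}}} \cong N_{\overline{\mathfrak{p}}}^n$. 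Since $\ast$ swaps $\mathfrak{p}$ and $\overline{\mathfrak{p}}$, the polarization identifies $X_{\overline{\mathfrak{p}}}$ with the Serre dual of $X_\mathfrak{p}$ (a count of dimensions shows $X_{\overline{\mathfrak{p}}}$ has height $n$ and dimension $n-1$, matching $X_\mathfrak{p}^\vee$), hence $N_{\overline{\mathfrak{p}}} \cong N_x^\vee$. If the Frobenius on $N_x$ is $\delta_0\sigma$, then a direct computation from the relation $FV = p$ together with standard duality shows that the Frobenius on $N_x^\vee$ is $p(\delta_0^{-1})^t\sigma$.

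Next, I would match with Kottwitz's $\delta$. The splitting $\mathbf{G}(\mathbb{Q}_p) \cong \GL_n(\mathbb{Q}_p) \times \mathbb{Q}_p^\times$ sends $g = (g_\mathfrak{p}, g_{\overline{\mathfrak{p}}})$ in $(D \otimes \mathbb{Q}_p)^\times = D_\mathfrak{p}^\times \times D_{\overline{\mathfrak{p}}}^\times$ to $(g_{\overline{\mathfrak{p}}}, c)$ where $c = g^\ast g$, since the relation $g^\ast g = c$ (with $\ast$ swapping the two factors) forces $g_\mathfrak{p}$. Under the $B$-linear hermitian-up-to-scalar isomorphism $H_p \cong V \otimes \mathbb{Q}_{p^r}$ from \cite{KottwitzPoints}, Frobenius on $H_p$ is transported to the $\sigma$-linear operator $\delta\sigma$ on $V \otimes \mathbb{Q}_{p^r}$. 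Reading off the $\overline{\mathfrak{p}}$-component of $\delta$ via Morita yields the $\GL_n$-coordinate $(\delta_0^{-1})^t$, after absorbing the factor $p$ into the multiplier. For the multiplier: since the crystalline polarization pairing takes values in $\mathbb{Z}_{p^r}(-1)$, Frobenius scales it by $p$, so the pairing on the dual $H_p$ scales by $p^{-1}$; this is exactly $c$, giving the second coordinate $p^{-1}$.

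Finally, for uniqueness: the integral Dieudonn\'e module $D_x \subset N_x$ is canonical, and any isomorphism $N_x \cong \mathbb{Q}_{p^r}^n$ that matches $D_x$ with $\mathbb{Z}_{p^r}^n$ is unique up to the action of $\GL_n(\mathbb{Z}_{p^r})$, whose action $\sigma$-conjugates $\delta_0$. The analogous $\mathbf{G}(\mathbb{Z}_{p^r})$-uniqueness for $\delta$ follows because the lattice $\Lambda$ is self-dual and the isomorphism $H_p \cong V \otimes \mathbb{Q}_{p^r}$ can be normalized to carry the integral Dieudonn\'e lattice to $\Lambda \otimes \mathbb{Z}_{p^r}$, which is stabilized precisely by $\mathbf{G}(\mathbb{Z}_{p^r})$. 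The main obstacle is keeping the multiple conventions (covariant vs.\ contravariant Dieudonn\'e theory, the Morita transpose, the transpose from duality, and the $p$-twist from the polarization) aligned so that the final formula comes out as $((\delta_0^{-1})^t, p^{-1})$ rather than some sign- or transpose-twisted variant; each ingredient is routine, but coordinating them requires care.
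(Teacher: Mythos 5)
Your proposal follows essentially the same route as the paper's proof: decompose $H_p$ under the $\mathcal{O}_B\otimes\mathbb{Z}_p$-action, use Morita equivalence and the polarization isomorphism $X_{\mathfrak{p}}^n\cong(X_{\overline{\mathfrak{p}}}^n)^{\vee}$ to identify $H_p$ with $N_x^n$ and its dual, track the factor $p$ coming from the Tate twist in the Dieudonn\'e pairing, and obtain the integral normalization by choosing the isomorphism to preserve integral Dieudonn\'e lattices. The paper packages this slightly differently — building $H_p\cong\mathrm{Hom}(N_x^n\oplus(N_x^\vee)^n,\mathbb{Q}_{p^r})\cong V\otimes\mathbb{Q}_{p^r}$ directly from the contravariant module rather than passing through the covariant one and invoking Kottwitz's abstract isomorphism — but this is a cosmetic reorganization, not a different argument.
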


\begin{rem} This stronger normalization of $\delta$ will be essential.
\end{rem}

\begin{proof} Fix an isomorphism $\iota: N_x\longrightarrow \mathbb{Q}_{p^r}^n$. Recall that the $p$-divisible group $X_A$ of the universal abelian variety $A$ decomposes as
\[
X_A = X_{\mathfrak{p}}^n \times X_{\overline{\mathfrak{p}}}^n\ ,
\]
equivariant for action of $\mathcal{O}_B\otimes \mathbb{Z}_p\cong M_n(\mathbb{Z}_p)^{\mathrm{op}}\times M_n(\mathbb{Z}_p)^{\mathrm{op}}$, and the polarization $\lambda$ gives an isomorphism
\[
\lambda: X_{\mathfrak{p}}^n\cong (X_{\overline{\mathfrak{p}}}^n)^{\vee}\ .
\]
In particular,
\[
H_p\cong \mathrm{Hom}(N_x^n\oplus (N_x^{\vee})^n,\mathbb{Q}_{p^r})\ ,
\]
compatible with $\mathcal{O}_B$-action and polarization. Hence
\[
H_p\cong \mathrm{Hom}(N_x^n\oplus (N_x^{\vee})^n,\mathbb{Q}_{p^r})\cong M_n(\mathbb{Q}_{p^r})\oplus M_n(\mathbb{Q}_{p^r})\cong V\otimes_{\mathbb{Q}} \mathbb{Q}_{p^r}
\]
is compatible with the action of $\mathcal{O}_B$ and the hermitian forms, up to the scalar $p$. The scalar $p$ enters, because by the definition of $N_x^{\vee}$, there is a perfect pairing of Dieudonn\'{e} modules
\[
N_x\times N_x^{\vee}\longrightarrow \mathbb{Q}_{p^r}(1)\ .
\]
This implies the first part of the lemma.

One gets the stronger normalization of $\delta_0$ by picking an isomorphism of the integral Dieu\-donn\'{e} module of $X_{\mathfrak{p},x}$ with $\mathbb{Z}_{p^r}^n$.
\end{proof}

\section{Calculation of the nearby cycles}\label{SectionNearbyCycles}

In this section we consider the following general situation. Let $\mathcal{O}$ be the ring of integers in a local field $K$. Let $X/ \mathcal{O}$ be a separated scheme of finite type. Let $X_{\eta^{\mathrm{ur}}}$ be the base-change of $X$ to the maximal unramified extension $K^{\mathrm{ur}}$ of $K$ and let $X_{\mathcal{O}^{\mathrm{ur}}}$ be the base-change to the ring of integers in $K^{\mathrm{ur}}$. Also let $X_{\overline{s}}$ be the geometric special fibre of $X$. Then we have $\iota: X_{\overline{s}}\longrightarrow X_{\mathcal{O}^{\mathrm{ur}}}$ and $j: X_{\eta^{\mathrm{ur}}}\longrightarrow X_{\mathcal{O}^{\mathrm{ur}}}$. Let $R_{I_K}$ be the derived functor of taking invariants under the inertia group $I_K$.

\begin{lem}\label{InertiaNearby} For any sheaf $\mathcal{F}$ on the generic fibre $X_{\eta}$,
\[ R_{I_K}(R\psi \mathcal{F})=\iota^{\ast}Rj_{\ast} \mathcal{F}_{\eta^{\mathrm{ur}}}. \]
\end{lem}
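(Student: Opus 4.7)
The plan is to deduce the identity from a Hochschild--Serre/Cartan--Leray argument applied to the pro-\'etale Galois cover attached to $\bar K/K^{\mathrm{ur}}$. Let $\bar K$ be an algebraic closure of $K^{\mathrm{ur}}$ with ring of integers $\bar{\mathcal{O}}$, and form the diagram
\[
\begin{array}{ccccc}
X_{\bar\eta} & \xrightarrow{\bar j} & X_{\bar{\mathcal{O}}} & \xleftarrow{\bar\iota} & X_{\bar s} \\
\downarrow & & \downarrow & & \| \\
X_{\eta^{\mathrm{ur}}} & \xrightarrow{j} & X_{\mathcal{O}^{\mathrm{ur}}} & \xleftarrow{\iota} & X_{\bar s}
\end{array}
\]
with both squares Cartesian; call the left vertical arrow $\pi_\eta$, the middle one $\pi$ (base change along $\mathcal{O}^{\mathrm{ur}}\hookrightarrow\bar{\mathcal{O}}$), and the right one the identity. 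The generic-fibre restriction $\pi_\eta$ is a projective limit of finite \'etale Galois covers with Galois group $I_K=\mathrm{Gal}(\bar K/K^{\mathrm{ur}})$, while on special fibres $\pi$ is the identity, so $\iota=\pi\circ\bar\iota$. By definition $R\psi\mathcal{F}=\bar\iota^{\ast}R\bar j_{\ast}(\pi_\eta^{\ast}\mathcal{F}_{\eta^{\mathrm{ur}}})$, equipped with the natural $I_K$-action inherited from the action of $I_K$ on $X_{\bar\eta}$ over $X_{\eta^{\mathrm{ur}}}$.

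First I would check the identity stalkwise. For a geometric point $\bar x$ of $X_{\bar s}$, write $X^{\mathrm{ur}}_{(\bar x)}$ and $X^{\bar{\mathcal{O}}}_{(\bar x)}$ for the strict henselizations at $\bar x$ of $X_{\mathcal{O}^{\mathrm{ur}}}$ and $X_{\bar{\mathcal{O}}}$ respectively. These agree on special fibres (both reduce to the strict henselization at $\bar x$ of $X_{\bar s}$), and the induced map on generic fibres is a projective limit of finite \'etale Galois covers with group $I_K$, pulled back from $\pi_\eta$. The standard formulas for stalks of $Rj_{\ast}$ and $R\bar j_{\ast}$ then give
\[
(\iota^{\ast}Rj_{\ast}\mathcal{F}_{\eta^{\mathrm{ur}}})_{\bar x}=R\Gamma(X^{\mathrm{ur}}_{(\bar x)}\times_{\mathcal{O}^{\mathrm{ur}}}K^{\mathrm{ur}},\mathcal{F}),\qquad (R\psi\mathcal{F})_{\bar x}=R\Gamma(X^{\bar{\mathcal{O}}}_{(\bar x)}\times_{\bar{\mathcal{O}}}\bar K,\mathcal{F}),
\]
and Hochschild--Serre, in its derived form for the profinite cover, identifies the former with $R\Gamma\bigl(I_K,(R\psi\mathcal{F})_{\bar x}\bigr)=(R_{I_K}R\psi\mathcal{F})_{\bar x}$.

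The main obstacle is passing from this compatible family of stalk isomorphisms to a canonical quasi-isomorphism of complexes on $X_{\bar s}$. The cleanest way to handle it is to produce the comparison map $\iota^{\ast}Rj_{\ast}\mathcal{F}_{\eta^{\mathrm{ur}}}\to R_{I_K}R\psi\mathcal{F}$ globally as the edge morphism of the Cartan--Leray spectral sequence attached to the pro-Galois cover $\pi_\eta$ -- obtained from the factorization $\iota=\pi\circ\bar\iota$ together with the $(\pi_\eta^{\ast},R(\pi_\eta)_{\ast})$-adjunction -- so that the stalk computation above only has to verify that this globally defined arrow is a quasi-isomorphism.
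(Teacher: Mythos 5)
Your argument is correct in outline, but it takes a genuinely different route from the paper's. The paper's proof (referring back to Lemma~8.1 of the modular curve paper) is a one-line identification: both $R_{I_K}\circ R\psi$ and $\iota^{\ast}Rj_{\ast}$ are right derived functors of the same underlying left exact functor $\mathcal{F}\mapsto(\psi\mathcal{F})^{I_K}=\iota^{\ast}j_{\ast}\mathcal{F}_{\eta^{\mathrm{ur}}}$; the equality of the underived functors is the stalk identity you write, and the rest is the Grothendieck spectral sequence machine (one checks $\psi$ sends injectives to $(-)^{I_K}$-acyclics, while $\iota^{\ast}$ is exact). Your version instead derives everything first and then compares the outputs stalkwise via Cartan--Leray for the pro-Galois cover of strictly local generic fibres. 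This buys you a concrete picture of where the isomorphism comes from, at the cost of having to address two points that the abstract argument sidesteps automatically: (i) the commutation $(R_{I_K}R\psi\mathcal{F})_{\bar x}\cong R\Gamma(I_K,(R\psi\mathcal{F})_{\bar x})$, which you use without comment -- this is true here (e.g.\ because $I_K$ has finite $\ell$-cohomological dimension for $\ell\neq p$, or because $R_{I_K}$ is computed stalk-locally in the equivariant derived category), but it is not a formality and deserves a sentence; and (ii) the construction of the global comparison arrow, which should not be packaged as ``the edge morphism of a spectral sequence'' (that only gives maps on cohomology objects) but rather, as you half-indicate, as the composite
\[
\iota^{\ast}Rj_{\ast}\mathcal{F}\longrightarrow R_{I_K}\bigl(\iota^{\ast}Rj_{\ast}\mathcal{F}\bigr)\longrightarrow R_{I_K}R\psi\mathcal{F},
\]
where the first arrow is the unit of the trivial-action adjunction and the second is $R_{I_K}$ applied to the $I_K$-equivariant base change map $\iota^{\ast}Rj_{\ast}\mathcal{F}\to R\psi\mathcal{F}$ coming from $\pi_\eta^{\ast}\dashv R(\pi_\eta)_{\ast}$ and proper base change along the integral morphism $\pi$. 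With these two points spelled out, the proof is sound and can be viewed as the ``hands-on'' unwinding of the derived-functor identification the paper invokes.
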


\begin{proof} Both sides are the derived functors of the same functor, cf. \cite{Scholze}, Lemma 8.1.
\end{proof}

We now make further assumptions. We assume that $X$ is regular and flat of relative dimension $n$ over $\mathcal{O}$ and that the special fibre $X_s$ possesses a stratification $X_s = \bigcup_i \mathring{Z}_i$ into locally closed strata $\mathring{Z}_i$ whose closures $Z_i$ are regular. We call the $Z_i$ special fibre components. Let $c(Z)$ be the codimension of $Z$ in $X$, for any special fibre component $Z$ in $X$.

To any special fibre component $Z$, we associate a $\overline{\mathbb{Q}}_{\ell}$-vector space $W_Z$ together with maps $W_Z\longrightarrow W_{Z^{\prime}}$ whenever $c(Z)=c(Z^{\prime})+1$, by induction on $c(Z)$. If $c(Z)=1$, we simply set $W_Z=\overline{\mathbb{Q}}_{\ell}$. In general, we define $W_Z$ to be the kernel of the map
\[
\bigoplus_{\substack{Z\subset Z^{\prime}\\ c(Z^{\prime})=c(Z)-1}} W_{Z^{\prime}} \longrightarrow \bigoplus_{\substack{Z\subset Z^{\prime}\\ c(Z^{\prime})=c(Z)-2}} W_{Z^{\prime}}\ ,
\]
where in the case $c(Z)=2$, the target has to be replaced by $\overline{\mathbb{Q}}_{\ell}$. We make the following assumption:
\vspace{0.1in}

($\ast$) The sequence
\[
0\longrightarrow W_Z\longrightarrow \ldots \longrightarrow \bigoplus_{\substack{Z\subset Z^{\prime}\\ c(Z^{\prime})=i}} W_{Z^{\prime}} \longrightarrow \bigoplus_{\substack{Z\subset Z^{\prime}\\ c(Z^{\prime})=i-1}} W_{Z^{\prime}} \longrightarrow \ldots \longrightarrow \overline{\mathbb{Q}}_{\ell}\longrightarrow 0
\]
is exact for all special fibre components $Z$.

\begin{rem} By definition of $W_Z$, this sequence is exact at the first step. It is not clear to the author whether the condition ($\ast$) is automatic in general. Using a combinatorial result of Folkman on the homology of geometric lattices, \cite{Folkman}, one can check that it is fulfilled if the reduced intersection of any two special fibre components is again a special fibre component. Note that in the general case, this intersection will only be a union of special fibre components.
\end{rem}

Finally, let $x\in X_s(\mathbb{F}_q)$ be a point of the special fibre, with geometric point $\overline{x}\in X_s(\overline{\mathbb{F}}_q)$ over $x$.

\begin{thm}\label{CalcInertiaNearby} Assume that ($\ast$) holds. Then for all $k$, there are canonical isomorphisms
\[
(\iota^{\ast}R^kj_{\ast} \overline{\mathbb{Q}}_{\ell})_{\overline{x}}\cong \bigoplus_{\substack{Z, c(Z)=k\\ x\in Z}} W_Z(-k)\ .
\]
\end{thm}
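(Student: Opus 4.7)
I proceed by induction on $k$, along the lines of Theorem A of \cite{Scholze} but with more elaborate combinatorics. After passing to the strict henselization at $\overline{x}$, the computation becomes local in a regular scheme on which the special fibre components through $\overline{x}$ form a configuration of regularly embedded closed subschemes. For the base case $k=1$, each codimension-one special fibre component through $\overline{x}$ is a regular divisor in the regular scheme $X_{\mathcal{O}^{\mathrm{ur}}}$, so Thomason's purity gives
\[
(\iota^{\ast} R^1 j_{\ast} \overline{\mathbb{Q}}_{\ell})_{\overline{x}} \cong \bigoplus_{c(Z)=1,\,\overline{x}\in Z} \overline{\mathbb{Q}}_{\ell}(-1),
\]
which matches $W_Z = \overline{\mathbb{Q}}_{\ell}$ for $c(Z)=1$.

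For the inductive step, I filter $j$ through the open subsets
\[
V_i := X_{\mathcal{O}^{\mathrm{ur}}} \setminus \bigcup_{c(Z) > i} Z,
\]
so that $V_0 = X_{\eta^{\mathrm{ur}}}$ and $V_i = X_{\mathcal{O}^{\mathrm{ur}}}$ for $i$ large, writing $j = \cdots \circ j_2\circ j_1$ with $j_i: V_{i-1}\hookrightarrow V_i$. On $V_i$ every codimension-$i$ special fibre component is regularly embedded of codimension $i$, because all higher-codimension intersections with other components have been removed. Applying the localization triangle and Thomason's purity to each $j_i$ individually and composing the resulting Leray spectral sequences yields a spectral sequence whose $E_1$-page at the stalk $\overline{x}$ decomposes, for each codimension-$k$ special fibre component $Z$ through $\overline{x}$, as the complex
\[
\bigoplus_{\substack{Z\subset Z'\\ c(Z')=k-1}} W_{Z'}(-k) \longrightarrow \cdots \longrightarrow \bigoplus_{\substack{Z\subset Z'\\ c(Z')=1}} W_{Z'}(-k) \longrightarrow \overline{\mathbb{Q}}_{\ell}(-k),
\]
with differentials induced by iterated Gysin restrictions and with uniform Tate twist $(-k)$ accumulated from one factor of $(-1)$ per codimension bump. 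Condition $(\ast)$ asserts precisely that the kernel of the leftmost map is $W_Z$ and that all further cohomology vanishes, so summing over codim-$k$ components through $\overline{x}$ produces the asserted formula for $(\iota^{\ast}R^k j_{\ast}\overline{\mathbb{Q}}_{\ell})_{\overline{x}}$.

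The main obstacle is the identification of the $E_1$-differentials produced by iterated purity with the combinatorial differentials in the defining complex of $W_Z$. This requires tracking how Gysin restriction maps from codim-$i$ to codim-$(i-1)$ strata interact with the inductive definition of the $W_{Z'}$, and verifying compatibility of all signs and Tate twists; it is the $\ell$-adic analogue of Brieskorn's Lemma 5 in \cite{Brieskorn} for hyperplane arrangement complements, as noted in the introduction. Once this compatibility is established, $(\ast)$ simultaneously furnishes the vanishing of the higher cohomology of the complex and the identification of its surviving cohomology with $W_Z(-k)$, and a weight argument (the stalk is then manifestly pure of weight $2k$) forces the spectral sequence to degenerate at $E_2$, completing the induction.
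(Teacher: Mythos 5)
The approach you take is genuinely different from the paper's, but it has a real gap that you yourself identify and do not close, and your description of the $E_1$-page appears to be incorrect as stated.

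\textbf{How the paper argues.} The paper does not filter $j$ and chase spectral sequences. It instead writes down a single global complex of (complexes of) sheaves whose terms are $W_Z\otimes b_{Z\ast}b_Z^{!}I^{\bullet}$, with the vector spaces $W_Z$ inserted by hand as coefficients and the differentials built from the combinatorial maps $W_Z\to W_{Z'}$ together with the Gysin-type maps $b_{Z\ast}b_Z^{!}\to b_{Z'\ast}b_{Z'}^{!}$. Acyclicity of this complex is then reduced, via the decomposition $I=\bigoplus_Z f_{Z\ast}I_Z$ of an injective sheaf along the locally closed strata, to precisely the exactness in $(\ast)$. Purity (Thomason) is invoked once at the end to turn $b_Z^{!}\overline{\mathbb{Q}}_{\ell}$ into $\overline{\mathbb{Q}}_{\ell}(-c(Z))[-2c(Z)]$, and the stalk computation falls out of the now-exact complex. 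The point is that the combinatorics are fed in directly; nothing has to be \emph{recognized} inside a spectral sequence.

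\textbf{Where your argument is incomplete.} In your filtration through the $V_i$, the graded pieces of the Postnikov tower at the stalk $\overline{x}$ are of the form $\bigoplus_{c(Z')=p,\,\overline{x}\in Z'}(R\tilde{j}_{Z'\ast}\overline{\mathbb{Q}}_{\ell})_{\overline{x}}(-p)[1-2p]$, where $\tilde{j}_{Z'}\colon \mathring{Z'}\hookrightarrow Z'$. These stalks are \emph{not} a priori the combinatorial objects $W_{Z'}(-k)$; they are pushforwards along open immersions inside the $\overline{\mathbb{F}}_q$-schemes $Z'$, and computing them already requires an analogue of the theorem you are trying to prove, applied inside $Z'$. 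Even granting that, the objects that naturally appear are the relative $W$-groups $W^{(Z')}_{Z''}$ built from the poset of strata lying inside $Z'$, not the global $W_{Z'}$ built from the strata containing $Z'$; these differ in general, and the re-indexing you need — from a sum over codimension-$p$ components $Z'$ with internal strata $Z''$ to a sum over codimension-$k$ components $Z$ with the ambient $W_{Z'}$ as in your displayed $E_1$-complex — is a nontrivial combinatorial identity that you do not prove. This is exactly what you flag as ``the main obstacle,'' and it is the crux of the theorem, not a routine verification: until those $d_1$-differentials are matched with the maps defining $W_Z$, condition $(\ast)$ cannot be applied to the $E_1$-page. (Your weight argument for $E_2$-degeneration is fine once the $E_2$-page is known, but it does not help with the $E_1$-identification.)

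\textbf{Suggestion.} If you want to save a filtration-based approach, you would have to prove the $\overline{\mathbb{F}}_q$-version of the theorem (for the inclusions $\mathring{Z}\hookrightarrow Z$) simultaneously and by a strengthened induction, and then establish the combinatorial re-indexing that identifies the $d_1$-complex with the truncated $(\ast)$-sequence for each $Z$. This is more work than the paper's device of building the complex with $W_Z$-coefficients directly, which sidesteps the entire identification problem. I would recommend switching to that strategy, or at least making the inductive setup and the re-indexing lemma explicit.
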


\begin{proof} Let $b_Z: Z\longrightarrow X$ be the closed embeddings, for all special fibre components $Z$. Let $I^{\bullet}$ be an injective resolution of the constant sheaf $\overline{\mathbb{Q}}_{\ell}$ on $X$. Consider the following complex
\[\begin{aligned}
0\longrightarrow\ldots &\longrightarrow \bigoplus_{Z, c(Z)=i} W_Z\otimes b_{Z\ast} b_Z^{!} I^{\bullet} \longrightarrow \ldots \\
\ldots&\longrightarrow \bigoplus_{Z, c(Z)=1} W_Z\otimes b_{Z\ast} b_Z^{!} I^{\bullet} \longrightarrow \iota^{\ast} I^{\bullet} \longrightarrow \iota^{\ast} j_{\ast} j^{\ast} I^{\bullet}\longrightarrow 0\ .
\end{aligned}\]

\begin{prop} The hypercohomology of this complex vanishes.
\end{prop}

\begin{proof} We prove the proposition for any complex of injective sheaves $I^{\bullet}$. This reduces the problem to doing it for a single injective sheaf $I$. Recall that for any injective sheaf $I$ on a scheme $X=U\cup Z$ with $j: U\longrightarrow X$ an open and $i: Z\longrightarrow X$ a closed embedding, one gets a decomposition $I=j_{\ast} I_U\oplus i_{\ast} I_Z$ for certain injective sheaves $I_U$ and $I_Z$ on $U$, resp. $Z$. In our situation, we get a decomposition of $I$ as
\[ I = \bigoplus_Z f_{Z\ast} I_Z\ , \]
where $I_Z$ is an injective sheaf on $\mathring{Z}$ and where $f_Z: \mathring{Z}\longrightarrow X$ is the natural locally closed embedding. But that the complex is exact for
\[
I = f_{Z\ast} I_Z
\]
is a direct consequence of the condition ($\ast$) on the vector spaces $W_Z$.
\end{proof}

As in \cite{Scholze}, proof of Theorem 8.2, this implies the Theorem, using cohomological purity, as proved by Thomason, \cite{Thomason}.
\end{proof}

As a corollary of Theorem \ref{CalcInertiaNearby}, we can compute the semisimple trace of Frobenius on the nearby cycles in our situation. We start by analyzing the combinatorics of the situation that will arise. Recall from Theorem \ref{SpecialFiberShimuraVariety} that the special fibre components are parametrized by (nontrivial) direct summands of $(\mathbb{Z}/p^m\mathbb{Z})^n$. The next lemma shows that the condition ($\ast$) holds in this case. Note that if a group $G$ acts on $X$ preserving the stratification of the special fibre, then all vector spaces $W_Z$ acquire an action of the stabilizer of $Z$ in $G$. In our case
\[
\mathcal{M}_{\Gamma(p^m)}\otimes \mathbb{F}_p = \bigcup_{H\neq 0} \mathring{\mathcal{M}}_{\Gamma(p^m)}^H\ ,
\]
and the stabilizer of $\mathring{\mathcal{M}}_{\Gamma(p^m)}^H$ is $\GL(H)\cong \GL_k(\mathbb{Z}/p^m\mathbb{Z})$, if $H\cong (\mathbb{Z}/p^m\mathbb{Z})^k$.

\begin{lem}\label{CombSteinberg} Let the rank of $H$ be $k$. Then the representation $W_H$ of $\GL(H)$ on the $\overline{\mathbb{Q}}_{\ell}$-vector space of functions $f: \{0=H_0\subsetneq H_1\subsetneq \cdots \subsetneq H_k=H\}\longrightarrow \overline{\mathbb{Q}}_{\ell}$ on complete flags of direct summands of $H$ satisfying
\[
\sum_{\substack{H_i^{\prime}\ \mathrm{direct}\ \mathrm{summand}\\ H_0\subsetneq \cdots \subsetneq H_{i-1}\subsetneq H_i^{\prime}\subsetneq H_{i+1}\subsetneq \cdots \subsetneq H_k=H}} f(H_0\subsetneq \cdots \subsetneq H_i^{\prime}\subsetneq \cdots \subsetneq H_k) = 0
\]
for all $i=1,\ldots,k-1$ is isomorphic to the Steinberg representation $\mathrm{St}_H$ of $\GL(H)$ given by
\[
\mathrm{St}_H = \mathrm{ker}\left(\mathrm{Ind}_{B}^{\GL(H)} 1 \longrightarrow \bigoplus_{B\subsetneq P} \mathrm{Ind}_{P}^{\GL(H)} 1\right)\ ,
\]
where $B\subset \GL(H)$ is a Borel subgroup, $P\subset \GL(H)$ runs through parabolic subgroups and $1$ denotes the trivial representation. Moreover, if $H^{\prime}\subset H$ is a direct summand of rank $k-1$, then there are natural transition maps $W_H\longrightarrow W_{H^{\prime}}$ defined by
\[
f^{\prime}(H_0\subsetneq\cdots\subsetneq H_{k-1}=H^{\prime}) = f(H_0\subsetneq\cdots\subsetneq H_{k-1}\subsetneq H_k=H)\ .
\]

Finally, the complex
\[
0\longrightarrow W_H\longrightarrow \ldots \longrightarrow \bigoplus_{\substack{H^{\prime}\subset H\\ \mathrm{rank}\ H^{\prime}=i}} W_{H^{\prime}} \longrightarrow \bigoplus_{\substack{H^{\prime}\subset H\\ \mathrm{rank}\ H^{\prime}=i-1}} W_{H^{\prime}} \longrightarrow \ldots \longrightarrow \overline{\mathbb{Q}}_{\ell}\longrightarrow 0
\]
is exact.
\end{lem}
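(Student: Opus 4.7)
My plan is to induct on $k=\mathrm{rank}(H)$, proving all three claims simultaneously. For $k=1$ everything is tautological: $W_H$, the space $\tilde{W}_H$ of alternating flag functions on $H$, and the Steinberg of $\GL_1$ all equal $\overline{\mathbb{Q}}_\ell$, and the complex reduces to an identity map. For $k\geq 2$ I would assume the lemma for all proper direct summands $H^{\prime}\subsetneq H$.

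The core step is identifying the inductively-defined kernel $W_H$ with the alternating flag function space $\tilde{W}_H$. I would define the ``truncate at the top'' map
\[
\Phi\colon\tilde{W}_H\longrightarrow\bigoplus_{\substack{H^{\prime}\subset H\\ \mathrm{rank}(H^{\prime})=k-1}}\tilde{W}_{H^{\prime}}=\bigoplus_{H^{\prime}}W_{H^{\prime}},\qquad \Phi(f)_{H^{\prime}}(H_0\subsetneq\cdots\subsetneq H_{k-1}=H^{\prime})=f(H_0\subsetneq\cdots\subsetneq H_{k-1}\subsetneq H).
\]
The alternating conditions on $f$ at the interior positions $j=1,\ldots,k-2$ translate directly into the alternating conditions defining each $\tilde{W}_{H^{\prime}}=W_{H^{\prime}}$, while the remaining alternating condition of $f$ at $j=k-1$ says precisely that $\Phi(f)$ lies in the kernel of the next differential $\bigoplus_{H^{\prime}}W_{H^{\prime}}\to\bigoplus_{H^{\prime\prime}}W_{H^{\prime\prime}}$. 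Injectivity and surjectivity of $\Phi$ onto this kernel are immediate by reading $f$ off of its truncation data, which identifies $W_H$ with $\tilde{W}_H$ and simultaneously gives the transition maps in the claimed form. The further identification $\tilde{W}_H\cong\mathrm{St}_H$ is the standard flag-function realization of the Steinberg: $\mathrm{Ind}_B^{\GL(H)}1$ is the space of all functions on complete flags, the map to $\mathrm{Ind}_P^{\GL(H)}1$ for $P\supsetneq B$ covering $B$ (corresponding to forgetting position $i$) is summation over the fillings of $H_i$, and the simultaneous kernel over all such $P$ is precisely $\tilde{W}_H$.

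For exactness of the complex, note first that the composition of two consecutive differentials vanishes automatically by the kernel construction: an element in the component $W_{H^{(r)}}$ is, by the inductive definition, annihilated by the subsequent map $\bigoplus_{H^{(r-1)}\subset H^{(r)}}W_{H^{(r-1)}}\to\bigoplus_{H^{(r-2)}\subset H^{(r)}}W_{H^{(r-2)}}$, and this is exactly the contribution of the element to the double differential. For genuine exactness I would appeal to a Solomon-Tits/Folkman-type Cohen-Macaulay property of the order complex of proper nonzero direct summands of $H$; under the Steinberg identification, our complex then becomes the standard resolution witnessing that the reduced top cohomology of this spherical building carries the Steinberg representation.

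The main obstacle is precisely this last step. The poset of direct summands of $(\mathbb{Z}/p^m\mathbb{Z})^k$ is not a lattice in general (for instance in $(\mathbb{Z}/4\mathbb{Z})^2$ the intersection $\langle e_1\rangle\cap\langle e_1+2e_2\rangle=\{0,2e_1\}$ is not a direct summand), so the remark in the paper about Folkman cannot be invoked verbatim on the full subgroup lattice. The cleanest fix is to verify Cohen-Macaulayness of the order complex of direct summands directly, either by establishing shellability or by reducing mod $p$ to the classical Tits building of $\GL_k(\mathbb{F}_p)$ where Solomon-Tits applies; once this is done, the sign-free differentials of our inductively-defined complex differ from the standard signed simplicial differentials only by a fixed orientation choice on flags, which does not affect the exactness.
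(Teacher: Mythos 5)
The first two assertions (identifying the inductively defined $W_H$ with the alternating flag-function space, reading off the transition maps, and matching that with $\mathrm{St}_H$) are handled in your proposal exactly as the paper does, and your observation that $d^2=0$ follows from the interior alternating conditions on each $W_{H^{\prime}}$ is a correct elaboration of what the paper leaves implicit. The divergence is in the exactness claim, and there your proposal has a genuine gap. You reduce exactness to a Cohen--Macaulay/Solomon--Tits statement about the order complex of nontrivial proper direct summands of $(\mathbb{Z}/p^m\mathbb{Z})^k$, and honestly flag that you do not know how to prove it; but the two fixes you suggest do not work as stated. Reducing modulo $p$ does not produce the Tits building of $\GL_k(\mathbb{F}_p)$: the map from direct summands of $(\mathbb{Z}/p^m\mathbb{Z})^k$ to subspaces of $\mathbb{F}_p^k$ has large nontrivial fibers, so the summand complex is strictly bigger. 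In your own test case $(\mathbb{Z}/4\mathbb{Z})^2$ the summand complex has $6$ vertices while the building of $\GL_2(\mathbb{F}_2)$ has $3$, and correspondingly $\dim\mathrm{St}_H=5$, not $2$ --- so there is no homotopy equivalence and the reduction cannot be invoked. The shellability route may well be viable, but it is left entirely unproved, and it is a nontrivial combinatorial statement in its own right.

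The paper avoids this combinatorics altogether. It first rewrites the complex as
\[
0\longrightarrow \mathrm{St}_k\longrightarrow \mathrm{Ind}_{P_1(\mathbb{Z}/p^m)}^{\GL_k(\mathbb{Z}/p^m)} 1\otimes \mathrm{St}_{k-1}\longrightarrow \cdots\longrightarrow \mathrm{Ind}_{P_{k-1}(\mathbb{Z}/p^m)}^{\GL_k(\mathbb{Z}/p^m)} 1\otimes 1\longrightarrow 1\longrightarrow 0\ ,
\]
observes that this is the complex of $\Gamma(p^m)$-invariants of the analogous complex of smooth $\GL_k(\mathbb{Q}_p)$-representations, and then exploits that each term $\mathrm{Ind}_{P_i(\mathbb{Q}_p)}^{\GL_k(\mathbb{Q}_p)} 1\otimes\mathrm{St}_i$ has exactly two irreducible constituents $\pi_{i-1}$ and $\pi_i$ (Lemma I.3.2 of Harris--Taylor). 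Since no differential vanishes, a length count forces $\im d^{i+1}=\pi_i=\ker d^i$ at each stage, and taking $\Gamma(p^m)$-invariants (which is exact for smooth representations over $\overline{\mathbb{Q}}_{\ell}$) finishes the argument. This is a purely representation-theoretic route that sidesteps the topology of the summand poset entirely, and is what makes the lemma so quick in the paper; if you want to pursue your combinatorial route instead, you would need to actually establish Cohen--Macaulayness (for instance by a direct shelling of the order complex of free direct summands), which is a separate project.
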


\begin{proof} Under the standard identifications of the homogeneous spaces with spaces of flags, the first assertion is immediate.

For the last assertion, assume that $H=(\mathbb{Z}/p^m)^k$, so that $\GL(H)=\GL_k(\mathbb{Z}/p^m)$. Let $P_i\subset\GL_k$ be the standard parabolic with Levi $\GL_i\times \GL_{k-i}$ for $i=1,\ldots,k-1$. Let $\mathrm{St}_i$ be the Steinberg representation of $\GL_i(\mathbb{Z}/p^m)$. Then the complex gets identified with
\[\begin{aligned}
0\longrightarrow \mathrm{St}_k&\longrightarrow \mathrm{Ind}_{P_1(\mathbb{Z}/p^m)}^{\GL_k(\mathbb{Z}/p^m)} 1\otimes \mathrm{St}_{k-1}\longrightarrow \cdots\longrightarrow \mathrm{Ind}_{P_i(\mathbb{Z}/p^m)}^{\GL_k(\mathbb{Z}/p^m)} 1\otimes \mathrm{St}_{i}\longrightarrow\cdots\\
 \cdots &\longrightarrow \mathrm{Ind}_{P_{k-1}(\mathbb{Z}/p^m)}^{\GL_k(\mathbb{Z}/p^m)} 1\otimes 1\longrightarrow 1\longrightarrow 0\ .
\end{aligned}\]
This is the complex of $1+p^mM_k(\mathbb{Z}_p)$-invariants in a corresponding complex
\[\begin{aligned}
0\longrightarrow \mathrm{St}_k&\longrightarrow \mathrm{Ind}_{P_1(\mathbb{Q}_p)}^{\GL_k(\mathbb{Q}_p)} 1\otimes \mathrm{St}_{k-1}\longrightarrow \cdots\longrightarrow \mathrm{Ind}_{P_i(\mathbb{Q}_p)}^{\GL_k(\mathbb{Q}_p)} 1\otimes \mathrm{St}_{i}\longrightarrow\cdots\\
 \cdots &\longrightarrow \mathrm{Ind}_{P_{k-1}(\mathbb{Q}_p)}^{\GL_k(\mathbb{Q}_p)} 1\otimes 1\longrightarrow 1\longrightarrow 0\ .
\end{aligned}\]
of $\GL_k(\mathbb{Q}_p)$-representations. But note that $\mathrm{Ind}_{P_i(\mathbb{Q}_p)}^{\GL_k(\mathbb{Q}_p)} 1\otimes \mathrm{St}_{i}$ is an extension of two irreducible representations $\pi_{i-1}$ and $\pi_i$ of $\GL_n(\mathbb{Q}_p)$, cf. e.g. Lemma I.3.2 of \cite{HarrisTaylor}. Let $d^i$ denote the $i$-th differential of this complex. As all differentials of the complex are nonzero, one sees by induction that $\im d^{i+1} = \pi_i = \ker d^i$, whence the complex is exact.
\end{proof}

\begin{cor}\label{NearbyCyclesShimuraVar} The condition ($\ast$) holds for the scheme $\mathcal{M}_{\Gamma(p^m)}$, and $W_Z=W_H$ if $Z=\mathcal{M}_{\Gamma(p^m)}^H$. In particular, let $x\in \mathcal{M}_{\Gamma(p^m)}(\mathbb{F}_q)$ be a point such that the infinitesimal part of the associated $p$-divisible group $X_{\mathfrak{p},x}$ has height $k$, i.e.
\[
x\in \mathring{\mathcal{M}}_{\Gamma(p^m)}^H(\mathbb{F}_q)
\]
for some direct summand $H\subset (\mathbb{Z}/p^m\mathbb{Z})^n$ of rank $k$. Fix an isomorphism $H\cong (\mathbb{Z}/p^m\mathbb{Z})^k$. Then
\[
(R_{I_F}^j R\psi_{\mathcal{M}_{\Gamma(p^m)}}\bar{\mathbb{Q}}_{\ell})_{\overline{x}} \cong \left(\mathrm{Ind}_{P_{j,k}(\mathbb{Z}/p^m\mathbb{Z})}^{\GL_k(\mathbb{Z}/p^m\mathbb{Z})} \mathrm{St}_j\otimes 1\right) (-j)\ ,
\]
where $P_{j,k}\subset \GL_k$ is the standard parabolic with Levi $\GL_j\times \GL_{k-j}$.\hfill $\Box$
\end{cor}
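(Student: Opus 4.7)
The plan is to combine the geometric stratification from Theorem \ref{SpecialFiberShimuraVariety} with the combinatorial/representation-theoretic content of Lemma \ref{CombSteinberg}, and then apply Theorem \ref{CalcInertiaNearby} and Lemma \ref{InertiaNearby}.

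First, I would pin down the codimensions. From the proof of Theorem \ref{SpecialFiberShimuraVariety}, near a point $x \in \mathring{\mathcal{M}}^{H_x}_{\Gamma(p^m)}$ with $\mathrm{rank}(H_x)=k$, there are regular parameters $X_1,\ldots,X_k$ on the Drinfeld deformation ring of $(X_{\mathrm{inf}},P_1,\ldots,P_k)$ (pulled back via a formally smooth map), and the stratum $\mathcal{M}^{H'}_{\Gamma(p^m)}$ for $H'\subset H_x$ of rank $j$ is cut out locally by the vanishing of $j$ of these parameters. Hence $c(\mathcal{M}^{H'}_{\Gamma(p^m)})=\mathrm{rank}\,H'$, and in particular the inclusion relations from Theorem \ref{SpecialFiberShimuraVariety}(iii) give $\mathcal{M}^{H_1}\subset \mathcal{M}^{H_2}$ iff $H_2\subset H_1$, with codimension drop equal to the drop in rank.

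Second, I would identify $W_{\mathcal{M}^H_{\Gamma(p^m)}}$ with the space $W_H$ of Lemma \ref{CombSteinberg} by induction on $\mathrm{rank}(H)$. For $\mathrm{rank}(H)=1$ both are $\overline{\mathbb{Q}}_\ell$. For larger rank, using step one, the inductive definition gives
\[
W_{\mathcal{M}^H}=\ker\Bigl(\bigoplus_{\substack{H'\subsetneq H\\\mathrm{rank}\,H'=\mathrm{rank}\,H-1}} W_{H'}\longrightarrow \bigoplus_{\substack{H''\subsetneq H\\\mathrm{rank}\,H''=\mathrm{rank}\,H-2}} W_{H''}\Bigr).
\]
By induction, an element of the middle term is a collection, for each codimension-one direct summand $H'\subset H$, of a function on complete flags of $H'$ satisfying the alternating conditions of Lemma \ref{CombSteinberg} at steps $1,\ldots,\mathrm{rank}\,H-2$. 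Concatenating $H$ on top, this is the same datum as a function on complete flags of $H$ satisfying those same alternating conditions; the extra kernel condition imposed is precisely the top alternating condition at step $\mathrm{rank}\,H-1$. Hence $W_{\mathcal{M}^H}\cong W_H\cong \mathrm{St}_H$. Condition $(\ast)$ is then exactly the final exactness assertion of Lemma \ref{CombSteinberg}.

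Third, I apply Theorem \ref{CalcInertiaNearby} together with Lemma \ref{InertiaNearby}. For $x\in \mathring{\mathcal{M}}^{H_x}_{\Gamma(p^m)}(\mathbb{F}_q)$ with $\mathrm{rank}\,H_x=k$, the special fibre components $Z$ of codimension $j$ passing through $x$ correspond bijectively to direct summands $H'\subset H_x$ of rank $j$ (by maximality of $H_x$, Theorem \ref{SpecialFiberShimuraVariety}(iv)). Thus
\[
(R^j_{I_F} R\psi\,\overline{\mathbb{Q}}_\ell)_{\overline{x}} \;=\; (\iota^{\ast} R^j j_{\ast} \overline{\mathbb{Q}}_\ell)_{\overline{x}} \;\cong\; \bigoplus_{\substack{H'\subset H_x\\\mathrm{rank}\,H'=j}} \mathrm{St}_{H'}(-j).
\]
Finally, fixing $H_x\cong (\mathbb{Z}/p^m)^k$, the set of rank-$j$ direct summands is the transitive $\GL_k(\mathbb{Z}/p^m)$-set $\GL_k(\mathbb{Z}/p^m)/P_{j,k}(\mathbb{Z}/p^m)$, and the Steinberg representation $\mathrm{St}_{H'}$ of $\GL(H')$ is the fibre over $H'$; viewing $\mathrm{St}_j\otimes 1$ as a $P_{j,k}(\mathbb{Z}/p^m)$-representation (trivial on the unipotent radical and on the $\GL_{k-j}$-factor of the Levi), the direct sum is canonically $\mathrm{Ind}_{P_{j,k}(\mathbb{Z}/p^m)}^{\GL_k(\mathbb{Z}/p^m)}(\mathrm{St}_j\otimes 1)$, as required.

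The only genuinely new work is the inductive identification in step two, which is the step I expect to be the main obstacle: one must check that the natural transition maps coming from the inductive definition of $W_Z$ match the restriction-of-flag maps in Lemma \ref{CombSteinberg}, so that the two recursive descriptions of kernels coincide. Everything else is a straightforward assembly of Theorem \ref{SpecialFiberShimuraVariety}, Lemma \ref{CombSteinberg}, Theorem \ref{CalcInertiaNearby} and Lemma \ref{InertiaNearby}.
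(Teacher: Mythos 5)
Your proposal is correct and follows exactly the intended route: the paper states this corollary with a $\Box$ as an immediate consequence of Theorem \ref{SpecialFiberShimuraVariety}, Lemma \ref{CombSteinberg}, Lemma \ref{InertiaNearby} and Theorem \ref{CalcInertiaNearby}, and you have correctly supplied the implicit steps (codimension equals rank, inductive identification $W_{\mathcal{M}^H}\cong W_H$ via restriction-of-flag maps, and rewriting the resulting direct sum as an induced representation). The point you flag as the main worry -- matching the inductive transition maps with the flag-restriction maps of Lemma \ref{CombSteinberg} -- is exactly the right thing to check, and your verification of it is sound.
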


Fix a positive integer $r$ and let $1\leq k\leq n$. We define certain (virtual) representations. First, we define the virtual representation
\[
I_k^0 = \frac{1}{1-p^r}\sum_{j=0}^k (-1)^j p^{rj} \mathrm{Ind}_{P_{j,k}(\mathbb{Z}_p)}^{\GL_k(\mathbb{Z}_p)} \mathrm{St}_j\otimes 1
\]
of $\GL_k(\mathbb{Z}_p)$, where $\mathrm{St}_j$ is the Steinberg representation of $\GL_j(\mathbb{Z}_p)$. Note that $I_k^0$ is self-dual. In the situation of the corollary, it follows from Lemma 7.7 of \cite{Scholze} that for all $g\in \GL_k(\mathbb{Z}/p^m\mathbb{Z})$ we have
\[
\mathrm{tr}^{\mathrm{ss}}(\Phi_{p^r}\times g | (R\psi_{\mathcal{M}_{\Gamma(p^m)}}\bar{\mathbb{Q}}_{\ell})_{\overline{x}} ) = \tr(ge_{\Gamma(p^m)}|I_k^0)\ ,
\]
where $e_{\Gamma(p^m)}$ is the idempotent associated to $\Gamma(p^m)=1+p^mM_n(\mathbb{Z}_p)$. Moreover, the long exact sequence in Lemma \ref{CombSteinberg} shows that $I_k^0$ can be rewritten as a $\mathbb{Z}$-linear combination of representations:
\[
I_k^0 = \sum_{j=0}^{k-1} p^{rj} \sum_{i=0}^j (-1)^i \mathrm{Ind}_{P_{i,k}(\mathbb{Z}_p)}^{\GL_k(\mathbb{Z}_p)} \mathrm{St}_i\otimes 1\ .
\]
We remark without proof that the virtual representation
\[
\sum_{i=0}^j (-1)^i \mathrm{Ind}_{P_{i,k}(\mathbb{Q}_p)}^{\GL_k(\mathbb{Q}_p)} \mathrm{St}_i\otimes 1
\]
is up to the sign $(-1)^j$ the irreducible subquotient of $\mathrm{Ind}_{B(\mathbb{Q}_p)}^{\GL_k(\mathbb{Q}_p)} 1$ corresponding to $\{1,\ldots,j+1\}\subset \{1,\ldots,k\}$ in the standard enumeration of the irreducible subquotients. These representations also show up in the cohomology of Drinfeld's upper half space, cf. \cite{SchneiderStuhler}.

Consider the standard parabolic $P_k\subset \GL_n$ with Levi subgroup $\GL_k\times \GL_{n-k}$ and define the virtual representation
\[
I_k = \mathrm{Ind}_{P_k(\mathbb{Z}_{p})}^{\GL_n(\mathbb{Z}_{p})} I_k^0\otimes C_c^{\infty}(\GL_{n-k}(\mathbb{Z}_p))
\]
which carries a left action of $\GL_n(\mathbb{Z}_p)$ and a left action of $g\in \GL_{n-k}(\mathbb{Z}_p)$ through multiplication by $g^{-1}$ on the right.

Next, we use these representations to define the test functions $\phi_h$ whose twisted orbital integrals will appear in the formula for the Lefschetz number. We need to introduce some terminology.

\begin{definition} An element $\delta_0\in \GL_n(\mathbb{Q}_{p^r})\cap M_n(\mathbb{Z}_{p^r})$ is said to be of height $k$ if $v_p(\delta_0)=1$ and $\delta_0$ is $\GL_n(\mathbb{Z}_{p^r})$-$\sigma$-conjugate to an element $(\delta_1,\delta_2)\in (\GL_k(\mathbb{Q}_{p^r})\cap M_k(\mathbb{Z}_{p^r}))\times \GL_{n-k}(\mathbb{Z}_{p^r})$ such that $N\delta_1$ is elliptic.
\end{definition}

It is clear that if $\delta_0$ is of height $k$, then
\[
\delta_0\in \GL_n(\mathbb{Z}_{p^r})\mathrm{diag}(p,1,\ldots,1)\GL_n(\mathbb{Z}_{p^r})\ .
\]
On the other hand, it is easy to see that one-dimensional $p$-divisible groups $X$ of height $n$ over $\mathbb{F}_{p^r}$ are in bijection with $\GL_n(\mathbb{Z}_{p^r})$-$\sigma$-conjugacy classes
\[
\delta_0\in \GL_n(\mathbb{Z}_{p^r})\mathrm{diag}(p,1,\ldots,1)\GL_n(\mathbb{Z}_{p^r})\ ,
\]
and under this bijection, $\delta_0$ is of height $k$ if and only if the infinitesimal part of $X$ has height $k$. In particular, this shows that any $\delta_0\in \GL_n(\mathbb{Z}_{p^r})\mathrm{diag}(p,1,\ldots,1)\GL_n(\mathbb{Z}_{p^r})$ is of height $k$ for a unique $k=1,\ldots,n$.

\begin{definition}\label{DefPhih} Let $h\in C_c^{\infty}(\GL_n(\mathbb{Z}_p))$. Let $\phi_h\in C_c^{\infty}(\GL_n(\mathbb{Q}_{p^r}))$ be the unique function with support in $\GL_n(\mathbb{Z}_{p^r})\mathrm{diag}(p,1,\ldots,1)\GL_n(\mathbb{Z}_{p^r})$ and invariant under $\GL_n(\mathbb{Z}_{p^r})$-$\sigma$-conjugation, such that if $\delta_0 = (\delta_1,\delta_2)\in (\GL_k(\mathbb{Q}_{p^r})\cap M_k(\mathbb{Z}_{p^r}))\times \GL_{n-k}(\mathbb{Z}_{p^r})$ is of height $k$, then
\[
\phi_h(\delta_0) = \tr( h \times N\delta_2 | I_k )\ .
\]
\end{definition}

This function will take the place of the characteristic function, often called $\phi_r$, of
\[
\GL_n(\mathbb{Z}_{p^r})\mathrm{diag}(p,1,\ldots,1)\GL_n(\mathbb{Z}_{p^r})
\]
whose twisted orbital integrals intervene in the proof of the Theorem D for the case that $K_p$ is a maximal compact subgroup.

Recall that we associated an element $\delta_0\in \mathrm{GL}_n(\mathbb{Q}_{p^r})\cap M_n(\mathbb{Z}_{p^r})$ to any point $x\in \mathcal{M}(\mathbb{F}_{p^r})$, well-defined up to $\GL_n(\mathbb{Z}_{p^r})$-$\sigma$-conjugation. Fix a geometric point $\overline{x}$ over $x$. We have the covering
\[
\pi_m: \mathcal{M}_{\Gamma(p^m)}\longrightarrow \mathcal{M}
\]
and the sheaf $\mathcal{F}_m = \pi_{m\eta\ast} \mathbb{Q}_{\ell}$ on the generic fibre of $\mathcal{M}_{\Gamma(p^m)}$. Define the vector space
\[
(R\psi \mathcal{F}_{\infty})_{\overline{x}} = \lim_{\longrightarrow} (R\psi \mathcal{F}_m)_{\overline{x}}\ .
\]
It carries a natural smooth admissible action of $\mathrm{GL}_n(\mathbb{Z}_p)$ and a commuting continuous action of $\mathrm{Gal}(\overline{\mathbb{Q}}_{p^r}/\mathbb{Q}_{p^r})$.

Finally, fix $h\in C_c^{\infty}(\GL_n(\mathbb{Z}_p))$ and define a new function $h^{\vee}\in C_c^{\infty}(\GL_n(\mathbb{Z}_p))$ by $h^{\vee}(g)=h((g^{-1})^t)$.

\begin{thm}\label{CalcSemisimpleTrace} In this situation,
\[
\mathrm{tr}^{\mathrm{ss}}(\Phi_{p^r}\times h^{\vee} | (R\psi\mathcal{F}_{\infty})_{\overline{x}} ) = \phi_h(\delta_0)\ .
\]
\end{thm}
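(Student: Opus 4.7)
The strategy is to localize along the tower $\mathcal{M}_{\Gamma(p^m)} \to \mathcal{M}$, apply the pointwise nearby-cycle computation of Corollary \ref{NearbyCyclesShimuraVar} at each lift of $\overline{x}$, and then assemble the contributions via parabolic induction to recover the representation $I_k$ appearing in Definition \ref{DefPhih}.

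First, I would unfold the fiber. Since $\pi_m$ is finite étale on the generic fiber, proper base change yields
\[
(R\psi \mathcal{F}_m)_{\overline{x}} = \bigoplus_{\tilde{x}\,\mapsto\, \overline{x}} (R\psi \overline{\mathbb{Q}}_\ell)_{\tilde{x}},
\]
where $\tilde{x}$ ranges over geometric lifts of $\overline{x}$ in $\mathcal{M}_{\Gamma(p^m)}$, i.e.\ over Drinfeld level-$p^m$-structures on $X_{\mathfrak{p},x}$. Since the infinitesimal part of $X_{\mathfrak{p},x}$ has height $k$, each such $\tilde{x}$ lies in $\mathring{\mathcal{M}}_{\Gamma(p^m)}^{H(\tilde{x})}$ for a unique rank-$k$ summand $H(\tilde{x}) \subset (\mathbb{Z}/p^m\mathbb{Z})^n$ (the subgroup whose basis vectors map into the infinitesimal part of $X_{\mathfrak{p},x}[p^m]$). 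A choice of basis of the étale quotient plus a trivialization of the infinitesimal data organizes the set of $\tilde{x}$ as an $\GL_n(\mathbb{Z}_p)/P_k(\mathbb{Z}_p)$-bundle over the finer data, matching precisely the induction pattern $I_k = \mathrm{Ind}_{P_k(\mathbb{Z}_p)}^{\GL_n(\mathbb{Z}_p)} I_k^0 \otimes C_c^\infty(\GL_{n-k}(\mathbb{Z}_p))$.

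Next I would apply Corollary \ref{NearbyCyclesShimuraVar} together with Lemma 7.7 of \cite{Scholze}, which converts the inertia-invariants description into a semisimple-trace formula: for any $g \in \GL_k(\mathbb{Z}/p^m\mathbb{Z})$ acting on $(R\psi \overline{\mathbb{Q}}_\ell)_{\tilde{x}}$ via its action on $H(\tilde{x})$,
\[
\mathrm{tr}^{\mathrm{ss}}(\Phi_{p^r}\times g \mid (R\psi \overline{\mathbb{Q}}_\ell)_{\tilde{x}}) = \tr(g \, e_{\Gamma(p^m)} \mid I_k^0).
\]
Passing to the limit $m\to\infty$, the $\GL_k(\mathbb{Z}_p)$-factor of the Hecke action on $(R\psi\mathcal{F}_\infty)_{\overline{x}}$ realizes $I_k^0$ smoothly, while the $\GL_{n-k}(\mathbb{Z}_p)$-factor acts by changes of basis on the étale part, realizing $C_c^\infty(\GL_{n-k}(\mathbb{Z}_p))$.

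Finally, I would incorporate the Frobenius. The geometric Frobenius $\Phi_{p^r}$ permutes the lifts $\tilde{x}$ through the $\sigma$-action on Drinfeld data, i.e.\ through multiplication by $\delta_0$ (with the normalization of Lemma \ref{CompDelta}); after $\sigma$-conjugating into block form $(\delta_1,\delta_2)$, the infinitesimal block acts by an element whose $\sigma$-centralizer absorbs into the $\GL_k$-equivariance on $I_k^0$, while the étale block acts by multiplication by $N\delta_2$. The appearance of $h^\vee$ rather than $h$ is forced by the transpose-inverse in Lemma \ref{CompDelta}, which relates $\delta$ to $\delta_0$ via $\delta = ((\delta_0^{-1})^t, p^{-1})$: the natural Hecke action on the Drinfeld tower is the one contragredient to the action on $I_k$. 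Putting everything together,
\[
\mathrm{tr}^{\mathrm{ss}}(\Phi_{p^r}\times h^{\vee} \mid (R\psi\mathcal{F}_\infty)_{\overline{x}}) = \tr(h \times N\delta_2 \mid I_k) = \phi_h(\delta_0).
\]

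The main obstacle is the bookkeeping in this last paragraph: precisely matching the Frobenius-twisted permutation of lifts $\tilde{x}$ with the parabolic induction formula for $I_k$, verifying that the étale-part Frobenius is indeed given by $N\delta_2$ in the correct normalization, and justifying the transpose-inverse twist linking $h$ to $h^\vee$. Everything else is formal given Corollary \ref{NearbyCyclesShimuraVar} and Lemma 7.7 of \cite{Scholze}.
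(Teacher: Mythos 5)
Your proposal follows the same route the paper takes: push the computation down along the finite morphism $\pi_m$, apply Corollary \ref{NearbyCyclesShimuraVar} together with Lemma 7.7 of \cite{Scholze} at each lift, identify the set of lifts with the parametrizing set of $I_k$, and recover $\phi_h(\delta_0)=\tr(h\times N\delta_2\mid I_k)$ from the Frobenius action on the \'etale part. The direct-sum decomposition over lifts is just the stalk-by-stalk reading of the paper's identity $R_{I_{\mathbb{Q}_p}}R\psi_{\mathcal{M}}\mathcal{F}_m=\pi_{m\overline{s}\ast}\,\iota^{\ast}Rj_{\ast}\overline{\mathbb{Q}}_{\ell}$, so the two arguments coincide in substance. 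One small imprecision: the decomposition of $(R\psi\mathcal{F}_m)_{\overline{x}}$ over lifts is not literally ``proper base change'' but rather the compatibility of nearby cycles with pushforward along the \emph{finite} morphism $\pi_m$ followed by exactness of finite pushforward on stalks; and the Frobenius on the \'etale part acts by $(N\delta_2)^{-1}$ (not $N\delta_2$), which cancels against the right-action-by-inverse built into the definition of $I_k$, exactly as in the paper. These are bookkeeping corrections, not structural gaps.
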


\begin{proof} We can assume that $h$ equals $ge_{\Gamma(p^m)}$ for some $g\in \GL_n(\mathbb{Z}/p^m\mathbb{Z})$.  Let us describe the fiber $\pi_m^{-1}(\overline{x})$ over $\overline{x}\in \mathcal{M}(\overline{\mathbb{F}}_{p^r})$. The Drinfeld level-$p^m$-structures are parametrized by $n$-tuples
\[
(P_1,\ldots,P_n)\in X_{\mathrm{et},\overline{x}}[p^m](\overline{\mathbb{F}}_{p^r})
\]
generating $X_{\mathrm{et},\overline{x}}[p^m](\overline{\mathbb{F}}_{p^r})$. The action of $\Phi_{p^r}$ on $\pi_m^{-1}(\overline{x})$ is given by the action of $\Phi_{p^r}$ on $X_{\mathrm{et},\overline{x}}[p^m](\overline{\mathbb{F}}_{p^r})$, which is given by right multiplication with $(N\delta_2)^{-1}$.

Further, Lemma 7.7 of \cite{Scholze} says that
\[
\mathrm{tr}^{\mathrm{ss}}(\Phi_{p^r}\times (g^{-1})^t | (R\psi\mathcal{F}_m)_{\overline{x}} ) = \frac 1{1-p^r}\mathrm{tr}(\Phi_{p^r}\times (g^{-1})^t | R_{I_{\mathbb{Q}_p}} (R\psi\mathcal{F}_m)_{\overline{x}} )\ .
\]

But one can rewrite
\[\begin{aligned}
R_{I_{\mathbb{Q}_p}} R\psi_{\mathcal{M}} \mathcal{F}_m = R_{I_{\mathbb{Q}_p}} R\psi_{\mathcal{M}} \pi_{m\eta\ast} \overline{\mathbb{Q}}_{\ell} &= \pi_{m\overline{s}\ast} R_{I_{\mathbb{Q}_p}} R\psi_{\mathcal{M}_{\Gamma(p^m)}} \overline{\mathbb{Q}}_{\ell} \\
& = \pi_{m\overline{s}\ast} \iota_{\mathcal{M}_{\Gamma(p^m)}}^{\ast} Rj_{\mathcal{M}_{\Gamma(p^m)}\ast} \overline{\mathbb{Q}}_{\ell}\ ,
\end{aligned}\]
because $\pi_m$ is finite. Here subscripts for $R\psi$ indicate with respect to which scheme the nearby cycles are taken, and $\iota_{\mathcal{M}_{\Gamma(p^m)}}$ and $j_{\mathcal{M}_{\Gamma(p^m)}}$ are as defined before Lemma \ref{InertiaNearby}, for the scheme $\mathcal{M}_{\Gamma(p^m)}$.

Now the theorem follows from Corollary \ref{NearbyCyclesShimuraVar} and the definition of $I_k$.
\end{proof}

\section{Orbital integrals for $\GL_n$}

Fix an integer $r\geq 1$ throughout this section. First, we construct the function $f_{n,p}$ which will turn out to have the correct orbital integrals.

\begin{lem}\label{FunctionExistsGLn} There is a function $f_{n,p}$ of the Bernstein center for $\GL_n(\mathbb{Q}_p)$ such that for all irreducible smooth representations $\pi$ of $\GL_n(\mathbb{Q}_p)$, $f_{n,p}$ acts by the scalar
\[ p^{\frac {n-1}2 r} \mathrm{tr}^{\mathrm{ss}}(\Phi_p^r|\sigma_{\pi})\ , \]
where $\sigma_{\pi}$ is the representation of the Weil group $W_{\mathbb{Q}_p}$ of $\mathbb{Q}_p$ with values in $\overline{\mathbb{Q}}_{\ell}$ associated to $\pi$ by the Local Langlands Correspondence.
\end{lem}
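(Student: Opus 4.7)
The plan is to invoke Bernstein's description of the center of the category of smooth representations of $G=\GL_n(\mathbb{Q}_p)$ together with the known compatibility of the local Langlands correspondence with parabolic induction. Recall that Bernstein's theorem identifies the Bernstein center with
\[
\prod_{[M,\sigma]} \mathcal{O}(X_{[M,\sigma]})^{W_{[M,\sigma]}}\ ,
\]
the product running over inertial equivalence classes $[M,\sigma]$ of cuspidal pairs, where $X_{[M,\sigma]}$ is the complex algebraic torus of unramified twists of $\sigma$ and $W_{[M,\sigma]}$ is the finite group of unramified twists of $\sigma$ that preserve its isomorphism class. An element of the Bernstein center acts on each irreducible smooth $\pi$ by a scalar depending only on the inertial class of the supercuspidal support of $\pi$, and is uniquely determined by the resulting collection of scalars; conversely, every such $W_{[M,\sigma]}$-invariant regular function on each $X_{[M,\sigma]}$ comes from a unique element of the center. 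So it suffices to prove that
\[
\pi\longmapsto c_\pi:=p^{\tfrac{n-1}{2}r}\mathrm{tr}^{\mathrm{ss}}(\Phi_p^r|\sigma_\pi)
\]
factors through the Bernstein spectrum and is regular on every component.

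First I would verify that $c_\pi$ depends only on the supercuspidal support of $\pi$. If $\pi$ has supercuspidal support $(M,\sigma_1\otimes\cdots\otimes\sigma_k)$ with $M=\prod_i\GL_{n_i}$, then by the compatibility of local Langlands for $\GL_n$ with Langlands sums (see \cite{HarrisTaylor}), the Frobenius-semisimplification of $\sigma_\pi$ is a direct sum of the $\sigma_{\sigma_i}$ with suitable half-integer Tate twists. Since $\mathrm{tr}^{\mathrm{ss}}(\Phi_p^r|\cdot)$ is by definition the trace of $\Phi_p^r$ on the $I_{\mathbb{Q}_p}$-invariants of the Frobenius-semisimplification, it factors through the supercuspidal support and is additive over Langlands sums. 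Next, fixing a representative $\sigma=\sigma_1\otimes\cdots\otimes\sigma_k$ of an inertial class, every point of $X_{[M,\sigma]}$ corresponds to an unramified twist $\sigma\otimes\chi$ with $\chi=\chi_1\otimes\cdots\otimes\chi_k$, and
\[
\sigma_{\sigma_i\otimes\chi_i}^{I_{\mathbb{Q}_p}}=\sigma_{\sigma_i}^{I_{\mathbb{Q}_p}}\otimes(\chi_i\circ\mathrm{Art}^{-1})\ ,
\]
so the Frobenius trace on each summand is multiplied by the monomial $\chi_i(p)^r$ in the natural character-value coordinates on $X_{[M,\sigma]}$. Hence $c_\pi$ is a Laurent polynomial on $X_{[M,\sigma]}$, and it is $W_{[M,\sigma]}$-invariant since it only uses the isomorphism class of the supercuspidal support. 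Bernstein's theorem then produces the desired element $f_{n,p}$.

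The main subtlety is the passage through Frobenius semisimplification: along a family of unramified twists, the naive Weil--Deligne representation may jump in how much monodromy $N$ it carries, but the Frobenius-semisimplification (and hence $\mathrm{tr}^{\mathrm{ss}}(\Phi_p^r|\cdot)$) is insensitive to such jumps and varies algebraically, which is what makes the construction work. Everything else is a formal application of Bernstein's structure theorem.
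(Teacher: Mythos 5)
Your argument is correct and is precisely the Bernstein-center argument the paper cites from Lemma 9.1 of \cite{Scholze}: one shows that $\pi\mapsto p^{\frac{n-1}{2}r}\mathrm{tr}^{\mathrm{ss}}(\Phi_p^r|\sigma_\pi)$ factors through the supercuspidal support of $\pi$ (cf.\ Remark~\ref{DefSemisimpleTrace}) and defines a regular, indeed Laurent-polynomial, $W_{[M,\sigma]}$-invariant function on each Bernstein component $X_{[M,\sigma]}$, whence Bernstein's structure theorem produces $f_{n,p}$. (One small wording slip: the scalar depends on the supercuspidal support itself, i.e.\ the actual point of $X_{[M,\sigma]}$, not merely on its inertial equivalence class, as your subsequent computation correctly reflects.)
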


\begin{proof} The proof is identical to the proof of Lemma 9.1 in \cite{Scholze}.
\end{proof}

\begin{rem}\label{DefSemisimpleTrace} The definition as given needs the existence of the Local Langlands Correspondence. However, one can give a direct definition of $f_{n,p}$, because it is easy to evaluate $\mathrm{tr}^{\mathrm{ss}}(\Phi_p^r|\sigma_{\pi})$: If $\pi$ is a subquotient of the normalized induction of a supercuspidal representation $\pi_1\otimes \cdots \otimes \pi_t$, then take the sum of $\pi_i(p^r)$ over all $\pi_i$ which are unramified characters. This is the definition that we are going to use.
\end{rem}

\begin{thm}\label{MainTheoremGLn} Let $h\in C_c^{\infty}(\GL_n(\mathbb{Z}_p))$. Then $f_{n,p}\ast h$ and $\phi_h$ have matching (twisted) orbital integrals.
\end{thm}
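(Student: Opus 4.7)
Plan: I follow the scheme of proof of Theorem \ref{BaseChangeIdentity}: reduce, via Arthur-Clozel plus the twisted Weyl integration formula plus Kazhdan density, to the pointwise trace identity
\[
\tr(f_{n,p} \ast h \mid \pi) = \tr((\phi_h, \sigma) \mid \Pi)
\]
for every tempered irreducible smooth representation $\pi$ of $\GL_n(\mathbb{Q}_p)$ with base-change lift $\Pi$. Choose $m$ so that $h$ is bi-invariant under $\mathcal{G}(\mathbb{Z}_p) := \Gamma(p^m) = 1 + p^m M_n(\mathbb{Z}_p)$; then $f_{n,p}\ast h$ is $\mathcal{G}(\mathbb{Z}_p)$-bi-invariant (since $f_{n,p}$, lying in the Bernstein center, commutes with everything in $C_c^\infty$) and $\phi_h$ is $\mathcal{G}(\mathbb{Z}_{p^r})$-bi-invariant by Definition \ref{DefPhih}. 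Once the identity is established: Arthur-Clozel produces $\phi'$ on $\GL_n(\mathbb{Q}_p)$ with orbital integrals matching the twisted orbital integrals of $\phi_h$; the twisted Weyl integration formula gives $\tr((\phi_h,\sigma)|\Pi) = \tr(\phi'|\pi)$; hence $\tr(f_{n,p}\ast h - \phi'|\pi) = 0$ for all tempered $\pi$, so all regular orbital integrals of $f_{n,p}\ast h - \phi'$ vanish by Kazhdan density, and matching on all semisimple classes then follows via \cite{Clozel}, Prop.~7.2.

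The left-hand side of the trace identity equals $p^{\frac{n-1}{2}r}\tr^{ss}(\Phi_p^r|\sigma_\pi)\tr(h|\pi)$ by Lemma \ref{FunctionExistsGLn}. For the right, substitute Definition \ref{DefPhih} and use the twisted-base-change character identity $\chi^{tw}_\Pi(\delta) = \chi_\pi(N\delta)$ on the regular semisimple locus. Stratifying by height $k \in \{1,\ldots,n\}$, one writes
\[
\tr((\phi_h,\sigma)|\Pi) = \sum_{k=1}^n \int \tr(h \times N\delta_2 \mid I_k)\,\chi_\pi(N\delta)\, d\delta,
\]
summed over $\sigma$-conjugacy classes of $(\delta_1,\delta_2) \in (\GL_k(\mathbb{Q}_{p^r})\cap M_k(\mathbb{Z}_{p^r}))\times \GL_{n-k}(\mathbb{Z}_{p^r})$ of height $k$, with $N\delta_1$ elliptic. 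Proposition \ref{SigmaConjClasses} and Corollary \ref{BCUnit} let one descend the $\delta_1$-integration to an integration over elliptic conjugacy classes in $\GL_k(\mathbb{Q}_p)\cap M_k(\mathbb{Z}_p)$, and the $\delta_2$-integration becomes an integration on $\GL_{n-k}(\mathbb{Z}_p)$.

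The main obstacle is identifying this sum with $p^{\frac{n-1}{2}r}\tr^{ss}(\Phi_p^r|\sigma_\pi)\tr(h|\pi)$. This is where the explicit virtual-representation form
\[
I_k^0 = \frac{1}{1-p^r}\sum_{j=0}^k (-1)^j p^{rj}\, \mathrm{Ind}_{P_{j,k}(\mathbb{Z}_p)}^{\GL_k(\mathbb{Z}_p)}\mathrm{St}_j\otimes 1
\]
is decisive. By Remark \ref{DefSemisimpleTrace}, $\tr^{ss}(\Phi_p^r|\sigma_\pi) = \sum \pi_i(p^r)$, summed over unramified constituents $\pi_i$ in the cuspidal support of $\pi$. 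For each $k$, the pairing of $I_k^0$ against the elliptic rank-$k$ block of the cuspidal support of $\pi$ extracts precisely the contribution $p^{r(k-1)/2}\pi_i(p^r)$ from a single unramified constituent of that block, via the Euler-characteristic mechanism underlying the Schneider-Stuhler computation \cite{SchneiderStuhler} of the cohomology of Drinfeld's upper half space; the $\mathrm{Ind}_{P_k}^{\GL_n}(\,\cdot\,\otimes C_c^\infty(\GL_{n-k}(\mathbb{Z}_p)))$ structure defining $I_k$ absorbs $\tr(h|\pi)$ on the complementary unramified block, and summing over $k$ reassembles the full semisimple trace. This identification is the technical heart of the proof and the step where the construction of $\phi_h$ from the nearby-cycles calculation of Corollary \ref{NearbyCyclesShimuraVar} is ultimately justified.
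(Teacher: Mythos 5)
Your first two paragraphs correctly reproduce the overall structure of the paper's argument: reduce by Kazhdan density to the pointwise identity $\tr(f_{n,p}\ast h\mid\pi)=\tr((\phi_h,\sigma)\mid\Pi)$, then apply the twisted Weyl integration formula stratified by height $k$, and invoke Casselman and a constant-term argument to descend to the Levi $\GL_k\times\GL_{n-k}$ and to decompose the character via Jacquet modules. Up to this point you are essentially matching Lemma \ref{WeylIntegration}.

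However, the third paragraph, which you yourself flag as ``the technical heart,'' is not a proof but an assertion of the conclusion. Saying that ``the pairing of $I_k^0$ against the elliptic rank-$k$ block of the cuspidal support of $\pi$ extracts precisely the contribution $p^{r(k-1)/2}\pi_i(p^r)$ \dots via the Euler-characteristic mechanism underlying the Schneider-Stuhler computation'' names an expected output without deriving it, and it is not obvious: the sum over $k$ mixes contributions from different Jacquet strata, and the cancellations that make the total equal $p^{\frac{n-1}{2}r}\tr^{\mathrm{ss}}(\Phi_p^r\mid\sigma_\pi)\tr(h\mid\pi)$ are not visible from the formula for $I_k^0$ alone. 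This is exactly where the paper does real work, and you are missing the three ingredients it uses. First, Lemma \ref{CompEllTrace}, which pins down $\tr((\chi_{B_n},\sigma)\mid\Pi)$ to be $0$ for representations with no unramified cuspidal constituent and $1$ for the trivial representation --- without this there is no handle on the elliptic twisted traces appearing in each $k$-stratum. Second, the reformulation of the identity as an equality of \emph{virtual representations of $\GL_n(\mathbb{Z}_p)$} (equation \eqref{EqRepr}), which turns the trace identity into something amenable to an induction on $n$. Third, the induction itself (Lemma \ref{InducedOK}), carried out via the Bernstein-Zelevinsky geometric lemma for $\pi_{N_k}$ of an induced representation, which reduces the problem to two cases: cuspidal supports with no unramified line (where both sides vanish) and unramified twists of Steinberg (where the claim collapses to the exactness in Lemma \ref{CombSteinberg}). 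None of this inductive machinery appears in your write-up, so the proposal does not establish the theorem; it correctly identifies what must be shown but leaves the crucial identification unargued.
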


\begin{rem} All Haar measures are normalized by giving a hyperspecial maximal compact subgroup measure $1$. For the case $h=e_{\GL_n(\mathbb{Z}_p)}$, this is exactly the usual base change identity, used e.g. in \cite{KottwitzLambdaAdic}.
\end{rem}

\begin{proof} Arguing as in the proof of Theorem \ref{BaseChangeIdentity}, it is enough to check that for any tempered irreducible representation $\pi$ of $\GL_n(\mathbb{Q}_p)$ with base-change lift $\Pi$, the equality
\[
\tr(f_{n,p}\ast h|\pi) = \tr((\phi_h,\sigma)|\Pi)
\]
holds. We begin with a computation of the right hand side.

Let $P_k$ be the standard parabolic with Levi $\GL_k\times \GL_{n-k}$ and let $N_k$ be its unipotent radical. For any admissible representation $\pi$ of $\GL_n(\mathbb{Q}_p)$ of finite length, let $\pi_{N_k}$ be its (unnormalized) Jacquet module with respect to $N_k$. Assume that
\[
\pi_{N_k} = \sum_{i=1}^{t_{\pi,k}} \pi_{N_k,i}^1\otimes \pi_{N_k,i}^2
\]
as elements of the Grothendieck group of representations of $\GL_k(\mathbb{Q}_p)\times \GL_{n-k}(\mathbb{Q}_p)$. Let $\Theta_{\Pi}$ be the distribution on $\GL_n(\mathbb{Q}_{p^r})$ given by $\Theta_{\Pi}(\delta)=\Theta_{\pi}(N\delta)$, where $\Theta_{\pi}$ is the character of $\pi$. Define $\Theta_{\Pi_{N_k,i}^1}$ in the same way. For any $f\in C_c^{\infty}(\GL_n(\mathbb{Q}_{p^r}))$, we will write $\Theta_{\Pi}(f)$ as $\tr( (f,\sigma) |\Pi )$, thinking of $\Pi$ as the base-change lift of $\pi$.

\begin{lem}\label{WeylIntegration} In this situation,
\[
\tr ( (\phi_h,\sigma)|\Pi ) = \sum_{k=1}^n\sum_{i=1}^{t_{\pi,k}} p^{(n-k) r} \tr( (\chi_{B_k},\sigma) | \Pi^1_{N_k,i} ) \tr( h | \mathrm{Ind}_{P_k(\mathbb{Z}_p)}^{\GL_n(\mathbb{Z}_p)} I_k^0\otimes \pi^2_{N_k,i} )\ ,
\]
where $\chi_{B_k}$ is the characteristic function of the set $B_k$ of all elements $\delta$ of $\GL_k(\mathbb{Q}_{p^r})\cap M_k(\mathbb{Z}_{p^r})$ with $v_p(\delta)=1$ such that $N\delta$ is elliptic.
\end{lem}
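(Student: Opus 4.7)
The plan is to compute the left-hand side via the twisted Weyl integration formula combined with a twisted Casselman--Deligne--Kazhdan formula relating the character of $\Pi$ to that of its Jacquet modules, and then to invoke base change via Corollary \ref{BCUnit} to handle the ``second Jacquet factor'' against the $C_c^{\infty}$-piece built into the definition of $I_k$.

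First, I would use that $\mathrm{supp}(\phi_h)\subset \GL_n(\mathbb{Z}_{p^r})\mathrm{diag}(p,1,\ldots,1)\GL_n(\mathbb{Z}_{p^r})$ decomposes as the disjoint union of height-$k$ strata for $k=1,\ldots,n$; by definition, every $\sigma$-conjugacy class on the height-$k$ stratum has a representative $(\delta_1,\delta_2)\in M_k(\mathbb{Q}_{p^r})$ with $\delta_1\in B_k$ and $\delta_2\in \GL_{n-k}(\mathbb{Z}_{p^r})$, where $M_k=\GL_k\times\GL_{n-k}$ is the standard Levi of $P_k$. Applying the twisted Weyl integration formula to $\tr((\phi_h,\sigma)|\Pi)$ thus yields a sum over $k$ of integrals on $B_k\times\GL_{n-k}(\mathbb{Z}_{p^r})$ (modulo $\sigma$-conjugation), weighted by the quotient of twisted Weyl discriminants for $G$ and $M_k$.

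Next, on each stratum I would invoke the twisted Casselman--Deligne--Kazhdan formula for the parabolic $P_k$: since $v_p(\delta_1)=1$ while $\delta_2$ is a unit, every positive root of $N_k$ evaluates strictly inside the unit disc on $N(\delta_1,\delta_2)$, so $\Theta_{\Pi}$ on this locus is computed by the (unnormalized) Jacquet module $\Pi_{N_k}$ up to the standard $\delta_{P_k}^{1/2}$ shift. By compatibility of base change with parabolic induction and Jacquet restriction (Arthur--Clozel, \cite{ArthurClozel}), $\Pi_{N_k}=\sum_i \Pi^1_{N_k,i}\otimes\Pi^2_{N_k,i}$, where each $\Pi^j_{N_k,i}$ is the base-change lift of $\pi^j_{N_k,i}$. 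The combined factor from the discriminant ratio and $\delta_{P_k}^{1/2}$, evaluated on $(\delta_1,\delta_2)$, collapses to the scalar $p^{(n-k)r}$.

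Substituting $\phi_h(\delta_1,\delta_2)=\tr(h\times N\delta_2|I_k)$ and splitting the integral as an iterated integral in $\delta_1$ and $\delta_2$, the $\delta_1$-part yields $\tr((\chi_{B_k},\sigma)|\Pi^1_{N_k,i})$ directly from the definition of $\chi_{B_k}$. For the $\delta_2$-part, Corollary \ref{BCUnit} applied with $M=M_{n-k}(\mathbb{Z}_p)$ and $I=M$ (so that $\mathcal{G}_M(\mathbb{Z}_{p^r})=\GL_{n-k}(\mathbb{Z}_{p^r})$) converts the twisted integral against $\Theta_{\Pi^2_{N_k,i}}(\delta_2)=\Theta_{\pi^2_{N_k,i}}(N\delta_2)$ on $\GL_{n-k}(\mathbb{Z}_{p^r})$ into an ordinary integral on $\GL_{n-k}(\mathbb{Z}_p)$ against $\Theta_{\pi^2_{N_k,i}}$; then unravelling $I_k=\mathrm{Ind}_{P_k(\mathbb{Z}_p)}^{\GL_n(\mathbb{Z}_p)}(I_k^0\otimes C_c^{\infty}(\GL_{n-k}(\mathbb{Z}_p)))$ and pairing the $C_c^{\infty}(\GL_{n-k}(\mathbb{Z}_p))$-factor against $\Theta_{\pi^2_{N_k,i}}$ replaces that factor by the underlying space of $\pi^2_{N_k,i}$, producing exactly $\tr(h|\mathrm{Ind}_{P_k(\mathbb{Z}_p)}^{\GL_n(\mathbb{Z}_p)}I_k^0\otimes\pi^2_{N_k,i})$.

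The main obstacle will be the bookkeeping of the modular and discriminant factors that produce the coefficient $p^{(n-k)r}$: concretely, one must verify that Casselman's $\delta_{P_k}^{1/2}$-shift, the quotient of twisted Weyl discriminants $|D^{\sigma}_G/D^{\sigma}_{M_k}|$, and the measure comparison between $G(\mathbb{Q}_{p^r})$ and $M_k(\mathbb{Q}_{p^r})$ combine, on $(\delta_1,\delta_2)$ with $v_p(\delta_1)=1$ and $\delta_2$ a unit in $\GL_{n-k}(\mathbb{Z}_{p^r})$, to exactly the single power $p^{(n-k)r}$. This is a standard but not immediate computation leaning on the explicit formula $\delta_{P_k}(\mathrm{diag}(a,b))=|\det a|^{n-k}|\det b|^{-k}$ together with the fact that $|\det N\delta_1|=p^{-r}$ while $|\det N\delta_2|=1$.
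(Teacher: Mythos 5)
Your overall strategy — twisted Weyl integration, Casselman reduction to Jacquet modules on the contracting locus, then Corollary \ref{BCUnit} plus unravelling $I_k$ for the $\delta_2$-piece — mirrors the paper's proof, with one cosmetic difference: the paper never invokes a ``twisted Casselman formula'' or compatibility of base change with Jacquet restriction. Instead it simply defines $\Theta_\Pi(\delta)=\Theta_\pi(N\delta)$ (and likewise $\Theta_{\Pi^1_{N_k,i}}$) and applies the ordinary Casselman character formula to $\Theta_\pi$ at $Nt$. This keeps everything on the $\pi$-side and avoids the extra Arthur--Clozel input you lean on.

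There are two genuine gaps in your proposal. First, and most importantly, you treat ``splitting the integral as an iterated integral in $\delta_1$ and $\delta_2$'' as automatic once you have stratified the support and applied the discriminant quotient. It is not: after the Weyl integration formula, the integrand still involves the twisted orbital integral $TO_{t\sigma}^{\GL_n}(\phi_h)$ computed in $\GL_n(\mathbb{Q}_{p^r})$, and you must show it coincides with the twisted orbital integral on the Levi $\GL_k\times\GL_{n-k}$, so that it factors as $TO_{t_1\sigma}(\chi_{B_k})\cdot TO_{t_2\sigma}(h_{n-k})$. The paper does this by showing that $\phi_h$ equals its own constant term along the parabolic opposite to $P_k$ (via Proposition 3.12 of \cite{ArthurClozel}), which is a substantive verification using the $\GL_n(\mathbb{Z}_{p^r})$-$\sigma$-conjugation invariance and the specific shape of the support. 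Without this descent, there is no iterated integral. Second, your constant bookkeeping is internally inconsistent: you say the character ``is computed by the (unnormalized) Jacquet module $\Pi_{N_k}$ up to the standard $\delta_{P_k}^{1/2}$ shift.'' The Casselman formula for the \emph{unnormalized} Jacquet module has no $\delta_P^{1/2}$ shift; the paper uses unnormalized Jacquet modules, obtains $p^{(n-k)r}$ purely from the ratio $\Delta_{\GL_n}^2/(\Delta_{\GL_k}^2\Delta_{\GL_{n-k}}^2)$, and would get the wrong power if a further $\delta_{P_k}^{1/2}(Nt)=p^{-(n-k)r/2}$ were introduced. So either you mean normalized Jacquet modules (and then must track the discriminant in a different normalization), or you should drop the shift; as written the factor does not ``collapse'' to $p^{(n-k)r}$.
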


\begin{proof} Let $V_n^k$ be the set of $\delta\in \GL_n(\mathbb{Q}_{p^r})\cap M_n(\mathbb{Z}_{p^r})$ which are of height $k$. It is clearly enough to prove that
\[
\tr ( (\phi_h\chi_{V_n^k},\sigma)|\Pi ) = p^{(n-k) r} \sum_{i=1}^{t_{\pi,k}} \tr( (\chi_{B_k},\sigma) | \Pi^1_{N_k,i} ) \tr( h | \mathrm{Ind}_{P_k(\mathbb{Z}_p)}^{\GL_n(\mathbb{Z}_p)} I_k^0\otimes \pi^2_{N_k,i} )\ .
\]
But by the twisted Weyl integration formula, cf. \cite{ArthurClozel}, p. 36,
\[\begin{aligned}
&\tr ( (\phi_h \chi_{V_n^k},\sigma)|\Pi ) = \sum_{\substack{T_k\subset \GL_k, T_{n-k}\subset \GL_{n-k}\\ T_k\ {\rm anisotropic}}} |W(T_k\times T_{n-k},\GL_k\times \GL_{n-k})|^{-1}\\
&\times \int_{T_k(\mathbb{Q}_{p^r})^{1-\sigma}\times T_{n-k}(\mathbb{Q}_{p^r})^{1-\sigma}\backslash T_k(\mathbb{Q}_{p^r})_1\times T_{n-k}(\mathbb{Z}_{p^r})} \Delta_{\GL_n(\mathbb{Q}_p)}^2 (Nt) TO_{t\sigma}(\phi_h) \Theta_{\pi}(Nt)\ .
\end{aligned}\]
Here $T_k(\mathbb{Q}_{p^r})_1$ denotes the set of elements $t\in T_k(\mathbb{Q}_{p^r})$ with $v_p(\det t)=1$. Also recall that
\[
\Delta_{\GL_n(\mathbb{Q}_p)}^2 (t) = |\det(1-\mathrm{Ad}\ t|\mathrm{Lie}\ \mathfrak{gl}_n)|_p\ .
\]

We can make several simplifications. First of all, writing $t=(t_1,t_2)\in T_k(\mathbb{Q}_{p^r})\times T_{n-k}(\mathbb{Q}_{p^r})$,
\[
\Theta_{\pi}(Nt) = \Theta_{\pi_{N_k}}(Nt) = \sum_{i=1}^{t_{\pi,k}} \Theta_{\pi^1_{N_k,i}}(Nt_1) \Theta_{\pi^2_{N_k,i}}(Nt_2)
\]
by a result of Casselman, \cite{CasselmanJacquet}. We may use this result because of the conditions $t_1\in T_k(\mathbb{Q}_{p^r})_1$ with $T_k$ anisotropic and $t_2\in T_{n-k}(\mathbb{Z}_{p^r})$.

Second,
\[
\Delta_{\GL_n(\mathbb{Q}_p)}^2(Nt) = p^{(n-k)r} \Delta_{\GL_k(\mathbb{Q}_p)}^2(Nt_1) \Delta_{\GL_{n-k}(\mathbb{Q}_p)}^2(Nt_2)\ ,
\]
because the action of $\mathrm{Ad}\ Nt$ on the Lie algebra of $N_k$ is by multiplication through $p$, and on the opposite nilpotent Lie algebra by multiplication through $p^{-1}$.

Third, we have the following lemma.

\begin{lem} Let $\phi\in C_c^{\infty}(\GL_n(\mathbb{Q}_{p^r}))$ have support in the elements which are of height $k$ and assume that $\phi$ is invariant under $\GL_n(\mathbb{Z}_{p^r})$-$\sigma$-conjugation. Then for $t=(t_1,t_2)\in \GL_k(\mathbb{Q}_{p^r})\times \GL_{n-k}(\mathbb{Z}_{p^r})$ with $Nt_1$ elliptic, $v_p(\det t_1)=1$, we have
\[
TO_{t\sigma}(\phi) = TO_{t\sigma}^{\GL_k\times \GL_{n-k}}(\phi)\ ,
\]
where the right-hand side is the twisted orbital integral on the Levi subgroup $\GL_k\times \GL_{n-k}$.
\end{lem}

\begin{proof} We apply Proposition 3.12 of \cite{ArthurClozel} with $P$ being the opposite parabolic of $P_k$. We have to check that the constant term $\phi^P$ of $\phi$ along $P$ equals $\phi$. Let $N$ be the nilpotent radical of $P$. It is enough to see that for any $\GL_k(\mathbb{Q}_{p^r})\times \GL_{n-k}(\mathbb{Q}_{p^r})$-$\sigma$-conjugate $t^{\prime}$ of $t$ and $n\in N(\mathbb{Q}_{p^r})$,
\[
\phi(t^{\prime}n)=\left\{\begin{array}{ll} \phi(t^{\prime}) & n\in N(\mathbb{Z}_{p^r}) \\ 0 & \mathrm{else}\ . \end{array}\right .
\]
This holds true, because both sides vanish unless $t^{\prime}\in (\GL_k(\mathbb{Q}_{p^r})\cap M_k(\mathbb{Z}_{p^r}))\times \GL_{n-k}(\mathbb{Z}_{p^r})$ and $n\in N(\mathbb{Z}_{p^r})$, in which case, $t^{\prime}$ and $t^{\prime}n$ are easily seen to be $\GL_n(\mathbb{Z}_{p^r})$-$\sigma$-conjugate.
\end{proof}

If one defines $h_{n-k}\in C_c^{\infty}(\GL_{n-k}(\mathbb{Z}_{p^r}))$ by
\[
h_{n-k}(\delta_2) = \tr(h\times N\delta_2 | I_k)\ ,
\]
then inserting $\phi=\phi_h \chi_{V_n^k}$ in this lemma implies that for $t=(t_1,t_2)\in T_k(\mathbb{Q}_{p^r})_1\times T_{n-k}(\mathbb{Z}_{p^r})$ as above,
\[
TO_{t\sigma}(\phi_h)=TO_{t_1\sigma}(\chi_{B_k})TO_{t_2\sigma}(h_{n-k})\ .
\]

Now, it is clear that the Weyl integration formula factors as a sum over $i=1,\ldots,t_{\pi,k}$ of products of the Weyl integration formulas for $\GL_k(\mathbb{Q}_{p^r})$ and $\GL_{n-k}(\mathbb{Q}_{p^r})$. To finish the proof, we only need to show that
\[
\tr( h_{n-k} | \Pi_{n-k} ) = \tr( h | \mathrm{Ind}_{P_k(\mathbb{Z}_p)}^{\GL_n(\mathbb{Z}_p)} I_k^0\otimes \pi_{n-k} )
\]
for any irreducible smooth representation $\pi_{n-k}$ of $\GL_{n-k}(\mathbb{Q}_p)$.  If $h_{n-k}^{\prime}\in C_c^{\infty}(\GL_{n-k}(\mathbb{Z}_p))$ is defined by
\[
h_{n-k}^{\prime}(\gamma_2) = \tr(h\times \gamma_2 | I_k)\ ,
\]
then $\tr( h_{n-k} | \Pi_{n-k} ) = \tr( h_{n-k}^{\prime} | \pi_{n-k} )$ by Corollary \ref{BCUnit} and Proposition \ref{SigmaConjClasses}, taking $M=M_n(\mathbb{Z}_p)$ and $I$ small. We need to see that
\[
\tr( h_{n-k}^{\prime} | \pi_{n-k} ) = \tr( h | \mathrm{Ind}_{P_k(\mathbb{Z}_p)}^{\GL_n(\mathbb{Z}_p)} I_k^0\otimes \pi_{n-k} )\ .
\]
But for this we can replace $\mathbb{Z}_p$ by $\mathbb{Z}/p^m\mathbb{Z}$ for $m$ large and $\pi_{n-k}$ by an irreducible representation of $\GL_{n-k}(\mathbb{Z}/p^m\mathbb{Z})$. Now, the computation is easy, using the orthogonality relations:
\[\begin{aligned}
&\ \# \GL_{n-k}(\mathbb{Z}/p^m\mathbb{Z}) \tr( h_{n-k}^{\prime} | \pi_{n-k} ) \\
=&\ \sum_{\gamma_2} \tr(h\times \gamma_2 | \mathrm{Ind}_{P_k(\mathbb{Z}/p^m\mathbb{Z})}^{\GL_n(\mathbb{Z}/p^m\mathbb{Z})} I_k^{0,\Gamma(p^m)}\otimes \mathbb{C}[\GL_{n-k}(\mathbb{Z}/p^m\mathbb{Z})]) \tr(\gamma_2|\pi_{n-k}) \\
=&\ \sum_{\gamma_2} \tr(h\times \gamma_2 | \mathrm{Ind}_{P_k(\mathbb{Z}/p^m\mathbb{Z})}^{\GL_n(\mathbb{Z}/p^m\mathbb{Z})} I_k^{0,\Gamma(p^m)}\otimes (\bigoplus_{\pi_{n-k}^{\prime}} \pi_{n-k}^{\prime}\otimes \pi_{n-k}^{\prime\vee})) \tr(\gamma_2|\pi_{n-k}) \\
=&\ \# \GL_{n-k}(\mathbb{Z}/p^m\mathbb{Z}) \tr(h | \mathrm{Ind}_{P_k(\mathbb{Z}/p^m\mathbb{Z})}^{\GL_n(\mathbb{Z}/p^m\mathbb{Z})} I_k^{0,\Gamma(p^m)}\otimes \pi_{n-k})\ ,
\end{aligned}\]
where $\pi_{n-k}^{\prime}$ runs through irreducible representations of $\GL_{n-k}(\mathbb{Z}/p^m\mathbb{Z})$.
\end{proof}

\begin{lem}\label{CompEllTrace} Let $\pi$ be an irreducible smooth representation of $\GL_n(\mathbb{Q}_p)$ which is a subquotient of a parabolic induction from a supercuspidal representation $\pi_1\otimes \cdots \otimes \pi_t$ with no $\pi_i$ being an unramified character. Then $\tr( (\chi_{B_n},\sigma)|\Pi )=0$.

If $\pi$ is the trivial representation of $\GL_n(\mathbb{Q}_p)$, then $\tr( (\chi_{B_n},\sigma)|\Pi )=1$.
\end{lem}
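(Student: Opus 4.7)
The plan is to reduce the twisted trace to an elliptic one via Weyl integration, transfer it to $\GL_n(\mathbb{Q}_p)$ by the classical base change fundamental lemma, and then dispose of the two cases separately. Since $\chi_{B_n}$ vanishes unless $N\delta$ is elliptic, the twisted Weyl-integration formula (as already used in the proof of Lemma~\ref{WeylIntegration}) collapses to
\[
\tr((\chi_{B_n},\sigma)|\Pi)=\sum_{T\text{ ell}}\frac{1}{|W(T)|}\int_{T(\mathbb{Q}_{p^r})_1}\Delta_G^2(Nt)\,TO_{t\sigma}(\chi_{B_n})\,\Theta_\pi(Nt)\,dt.
\]
On such elements $\chi_{B_n}$ agrees with $\phi_1:=\mathbf{1}_{\GL_n(\mathbb{Z}_{p^r})\mathrm{diag}(p,1,\ldots,1)\GL_n(\mathbb{Z}_{p^r})}$, so by the classical base change fundamental lemma (\cite{ArthurClozel}, \cite{KottwitzLambdaAdic}) $TO_{t\sigma}(\chi_{B_n})=O_{Nt}(f)$, where $f\in\mathcal{H}(\GL_n(\mathbb{Q}_p),\GL_n(\mathbb{Z}_p))$ is the spherical base-change image of $\phi_1$. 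Matching $\sigma$-stable elliptic maximal tori of $\GL_{n,L}$ with elliptic maximal tori of $\GL_n(\mathbb{Q}_p)$ via the norm then produces $\tr((\chi_{B_n},\sigma)|\Pi)=\mathrm{tr}_{\mathrm{ell}}(f|\pi)$, the elliptic part of the Weyl expansion of the ordinary trace of $f$ on $\pi$.

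For case~(a), the hypothesis forbids $\pi$ from being a subquotient of any unramified principal series, so $\pi^{\GL_n(\mathbb{Z}_p)}=0$ and $\tr(f|\pi)=0$ since $f$ is spherical. Expanding $\tr(f|\pi)$ into elliptic and non-elliptic contributions, Casselman's theorem identifies the piece indexed by a proper Levi $M$ with the $M$-elliptic trace of the normalized constant term $f_M$ against the Jacquet module $\pi_N^M$. The Bernstein--Zelevinsky geometric lemma shows that every irreducible subquotient of $\pi_N^M$ still has supercuspidal support free of unramified characters, so $(\pi_N^M)^{K_M}=0$ and $\tr(f_M|\pi_N^M)=0$. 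Inducting on $\dim M$---the base case being tori, where any ramified character integrates to zero against a $T(\mathcal{O})$-biinvariant function---every non-elliptic piece vanishes, hence $\mathrm{tr}_{\mathrm{ell}}(f|\pi)=0$. The main difficulty here is bookkeeping the $\delta_P^{1/2}$-factors in Casselman's descent so that the iteration across the Levi poset closes consistently.

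For case~(b), $\Pi$ is the trivial representation and extends trivially across $\sigma$, so $\tr((\chi_{B_n},\sigma)|\Pi)=\int\chi_{B_n}(\delta)\,d\delta=\mathrm{vol}(B_n)$. By Lemma~\ref{CompDelta}, $\GL_n(\mathbb{Z}_{p^r})$-$\sigma$-conjugacy classes in $\GL_n(\mathbb{Z}_{p^r})\mathrm{diag}(p,1,\ldots,1)\GL_n(\mathbb{Z}_{p^r})$ parametrize $1$-dimensional height-$n$ $p$-divisible groups over $\mathbb{F}_{p^r}$, and the supersingular such group is unique up to isomorphism; hence $B_n$ is a single such class. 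Its $\sigma$-centralizer inside $\GL_n(\mathbb{Z}_{p^r})$ is $\mathcal{O}_{D_n}^{\times}$, the unit group of the maximal order in the central $\mathbb{Q}_p$-division algebra $D_n$ of invariant $1/n$; under the Haar normalization that hyperspecial maximal compacts have measure $1$, both $\GL_n(\mathbb{Z}_{p^r})$ and $\mathcal{O}_{D_n}^{\times}$ have volume $1$, giving $\mathrm{vol}(B_n)=1$.
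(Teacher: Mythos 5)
Your argument for part (a) is essentially a re-derivation from scratch of what the paper's Lemma~\ref{WeylIntegration} already packages. The paper simply substitutes $h = e_{\GL_n(\mathbb{Z}_p)}$ into Lemma~\ref{WeylIntegration}: the left side vanishes because $\Pi$ is not unramified, and by induction on $n$ the terms with $k<n$ on the right contribute zero (using $\tr((\chi_{B_k},\sigma)|\Pi^1_{N_k,i})=0$ for $k<n$, since each $\Pi^1_{N_k,i}$ has supercuspidal support in the same list); this leaves exactly $\tr((\chi_{B_n},\sigma)|\Pi)=0$. Your route via the base-change fundamental lemma, Casselman descent, and the Bernstein--Zelevinsky geometric lemma is not wrong in spirit, but it re-does the Weyl integration/Jacquet bookkeeping that Lemma~\ref{WeylIntegration} already carried out, and you yourself flag the $\delta_P^{1/2}$-normalization as a gap; so this is more work for the same outcome.

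Part (b) contains a genuine error. You claim that the one-dimensional height-$n$ formal $p$-divisible group over $\mathbb{F}_{p^r}$ is unique up to isomorphism, hence that $B_n$ is a single $\GL_n(\mathbb{Z}_{p^r})$-$\sigma$-conjugacy class of volume $1$. Both claims are false. The set $B_n$ is open in $\GL_n(\mathbb{Q}_{p^r})$ and so has positive Haar measure, whereas each $\GL_n(\mathbb{Z}_{p^r})$-$\sigma$-conjugacy class is a lower-dimensional (hence measure-zero) subset: the $\sigma$-centralizer of $\delta\in B_n$ is the unit group of the degree-$n$ division algebra over $\mathbb{Q}_p$, of $\mathbb{Q}_p$-dimension $n^2$, while $\GL_n(\mathbb{Q}_{p^r})$ has $\mathbb{Q}_p$-dimension $rn^2$. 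Concretely, for $r=1$ these are ordinary conjugacy classes and $B_n$ manifestly contains elements with distinct characteristic polynomials; correspondingly there are infinitely many non-isomorphic one-dimensional formal $p$-divisible groups of height $n$ over $\mathbb{F}_{p^r}$ (e.g.\ already elliptic curves with distinct supersingular $a_q$ have non-isomorphic $p$-divisible groups). The conclusion $\mathrm{vol}(B_n)=1$ is in fact correct, but not for your reason; the paper obtains it by running the same inductive substitution $h=e_{\GL_n(\mathbb{Z}_p)}$ in Lemma~\ref{WeylIntegration}, now with $\pi$ trivial, and solving for the $k=n$ term, using the volume $1+p^r+\cdots+p^{(n-1)r}$ of the full double coset $\GL_n(\mathbb{Z}_{p^r})\mathrm{diag}(p,1,\ldots,1)\GL_n(\mathbb{Z}_{p^r})$.
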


\begin{proof} Consider the first assertion. Put $h=e_{\GL_n(\mathbb{Z}_p)}$ in Lemma \ref{WeylIntegration}. The left hand side vanishes, because $\phi_h$ is bi-$\GL_n(\mathbb{Z}_{p^r})$-invariant and $\pi$, hence $\Pi$, is not unramified, e.g. by Theorem \ref{BaseChangeIdentity} for $\mathcal{G}=\GL_n$. Further, for all $k<n$, we have by induction $\tr( (\chi_{B_k},\sigma)|\Pi_{N_k,i}^1 )=0$. Hence also the term for $k=n$ vanishes, which gives exactly the desired identity.

The same inductive computation also works if $\pi$ is the trivial representation, using that the left hand side is now the volume of
\[
\GL_n(\mathbb{Z}_{p^r})\mathrm{diag}(p,1,\ldots,1)\GL_n(\mathbb{Z}_{p^r})\ ,
\]
which can be determined to be $1+p^r+\ldots+p^{(n-1)r}$, e.g. by interpreting it as the number of neighbors of the vertex corresponding to $\GL_n(\mathbb{Z}_{p^r})$ in the building of $\mathrm{PGL}_n$.
\end{proof}

It is enough to show that for all tempered irreducible representations $\pi$ of $\GL_n(\mathbb{Q}_p)$
\begin{equation}\label{EqRepr}
p^{\frac{n-1}2 r}\mathrm{tr}^{\mathrm{ss}}(\Phi_p^r | \sigma_{\pi}) \pi = \sum_{k=1}^n \sum_{i=1}^{t_{\pi,k}} p^{(n-k)r} \tr( (\chi_{B_k},\sigma) | \Pi^1_{N_k,i} ) \mathrm{Ind}_{P_k(\mathbb{Z}_p)}^{\GL_n(\mathbb{Z}_p)} I_k^0\otimes \pi^2_{N_k,i}
\end{equation}
as virtual representations of $\GL_n(\mathbb{Z}_p)$. We prove the theorem by induction on $n$, knowing that the equation \eqref{EqRepr} holds true for all irreducible smooth representations of $\GL_{n^{\prime}}(\mathbb{Q}_p)$ with $n^{\prime}<n$.

\begin{lem}\label{InducedOK} Assume that $\pi$ is not necessarily irreducible, but is the parabolic induction of an irreducible smooth representation. Then equation \eqref{EqRepr} holds true.
\end{lem}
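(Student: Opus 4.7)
Write $\pi = \mathrm{n\text{-}Ind}_{P_l(\mathbb{Q}_p)}^{\GL_n(\mathbb{Q}_p)}(\pi_1\otimes\pi_2)$ with $\pi_1,\pi_2$ irreducible on $\GL_l,\GL_{n-l}$ for some $1\le l\le n-1$ (if $l=n$ the representation is already irreducible and there is nothing to do). Since $l<n$ and $n-l<n$, the equation \eqref{EqRepr} is available for $\pi_1$ and $\pi_2$ by the inductive hypothesis. The plan is to plug both of those equations into a parabolic induction to $\GL_n$, use transitivity of induction, and then match the result against the right-hand side of \eqref{EqRepr} for $\pi$ by expanding the Jacquet modules $\pi_{N_k}$ via the Bernstein--Zelevinsky geometric lemma.

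On the left-hand side, compatibility of the local Langlands correspondence with normalized parabolic induction gives $\sigma_\pi = \sigma_{\pi_1}\oplus\sigma_{\pi_2}$, so the factor $\mathrm{tr}^{\mathrm{ss}}(\Phi_p^r|\sigma_\pi)$ splits as the sum $\mathrm{tr}^{\mathrm{ss}}(\Phi_p^r|\sigma_{\pi_1})+\mathrm{tr}^{\mathrm{ss}}(\Phi_p^r|\sigma_{\pi_2})$; thus it suffices to match the two summands separately against contributions on the right-hand side. On the right-hand side, the geometric lemma expresses $\pi_{N_k}$ (after appropriate normalization) as a sum over pairs $(k_1,k_2)$ with $k_1+k_2=k$, $k_1\le l$, $k_2\le n-l$, of normalized inductions from $\GL_{k_1}\times\GL_{k_2}$ to $\GL_k$, tensored with a corresponding induction from $\GL_{l-k_1}\times\GL_{n-l-k_2}$ to $\GL_{n-k}$ of the appropriate Jacquet pieces $\pi_{1,N_{k_1}}\otimes\pi_{2,N_{k_2}}$.

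The crucial vanishing step is that the factor $\tr((\chi_{B_k},\sigma)|\Pi^1_{N_k,i})$ kills all mixed terms. Indeed $\chi_{B_k}$ is supported on elements $\delta$ with $N\delta$ \emph{elliptic} in $\GL_k$ and with $v_p(\det\delta)=1$; if $\Pi^1_{N_k,i}$ is parabolically induced from a proper Levi $\GL_{k_1}\times\GL_{k_2}$ with both $k_1,k_2\ge1$, then by the standard vanishing of (twisted) characters of properly induced representations at (twisted-)elliptic elements, $\tr((\chi_{B_k},\sigma)|\Pi^1_{N_k,i})=0$. Moreover, since $v_p(\det\delta)=1$, exactly one of $v_p(\det\delta_1),v_p(\det\delta_2)$ equals $1$ and the other is $0$. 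Hence in the geometric-lemma expansion only the two ``boundary'' cases survive: $(k_1,k_2)=(k,0)$ (requiring $k\le l$) and $(k_1,k_2)=(0,k)$ (requiring $k\le n-l$).

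In the first surviving case the $\GL_k$-factor is $\pi_{1,N_k}^1$ and the $\GL_{n-k}$-factor is $\mathrm{n\text{-}Ind}(\pi_{1,N_k,i}^2\otimes\pi_2)$; by transitivity of induction the contribution rearranges as $\mathrm{Ind}_{P_k}^{\GL_n}I_k^0\otimes\mathrm{n\text{-}Ind}(\pi_{1,N_k,i}^2\otimes\pi_2)$, which is exactly the parabolic induction from $\GL_l$ to $\GL_n$ of the right-hand side of \eqref{EqRepr} for $\pi_1$, up to the power-of-$p$ discrepancy coming from replacing the $\GL_l$-exponents $(l-1)r/2$ and $(l-k)r$ by the $\GL_n$-exponents $(n-1)r/2$ and $(n-k)r$. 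This discrepancy equals $p^{(n-l)r/2}$ on the left and $p^{(n-l)r}$ on the right, and it is precisely compensated by the normalization twist $\delta_{P_l}^{1/2}$ built into the definition of normalized induction (which shifts $\sigma_{\pi_1}$ by $|\cdot|^{(n-l)/2}$ in the Levi embedding). The inductive hypothesis for $\pi_1$ thus produces exactly $p^{(n-1)r/2}\mathrm{tr}^{\mathrm{ss}}(\Phi_p^r|\sigma_{\pi_1})\,\pi$; the second surviving case is treated symmetrically, using that the two possible orderings of the Levi factors give the same normalized induction as virtual $\GL_n(\mathbb{Z}_p)$-representations. Adding the two contributions reproduces the left-hand side of \eqref{EqRepr}.

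The main obstacle will be the bookkeeping of normalization factors: the unnormalized Jacquet modules appearing in Lemma \ref{WeylIntegration}, the $\delta_P^{1/2}$ twists in the geometric lemma, and the shifts between $(l-1)r/2$ and $(n-1)r/2$ exponents must all be tracked and shown to collapse into the simple form $p^{(n-k)r}$. The conceptual step -- the ellipticity-based vanishing of cross terms and transitivity of induction -- is clean; the rest is a careful but routine comparison of normalizations.
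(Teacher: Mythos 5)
Your proposal is essentially the same as the paper's proof: both expand $\pi_{N_k}$ by the Bernstein--Zelevinsky restriction--induction formula, kill the mixed terms via the vanishing of (twisted) characters of properly induced representations on elliptic elements, and feed the surviving $(k,0)$ and $(0,k)$ boundary terms back through the inductive hypothesis, with the remaining work being normalization bookkeeping (which the paper carries out explicitly using unnormalized inductions and explicit $|\det|^x$ twists, whereas you phrase it via normalized inductions). One small caveat: the sentence beginning ``Moreover, since $v_p(\det\delta)=1$\ldots'' is not quite coherent as stated, since the elements in the support of $\chi_{B_k}$ have elliptic norm and hence do not lie in any proper Levi, so there is no decomposition $\delta=(\delta_1,\delta_2)$ to which that valuation count applies; but this observation is superfluous anyway, as the ellipticity-based character vanishing you also invoke already kills all the properly induced $\GL_k$-factors, which is exactly the argument the paper uses.
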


\begin{proof} We first remark that the equation \eqref{EqRepr} makes sense for $\pi$ of this form, because by Remark \ref{DefSemisimpleTrace},
\[
\mathrm{tr}^{\mathrm{ss}}(\Phi_p^r | \sigma_{\pi})
\]
has a definitive meaning. Assume that $\pi=\mathrm{Ind}_{P_m(\mathbb{Q}_p)}^{\GL_n(\mathbb{Q}_p)} \pi^{\prime}\otimes \pi^{\prime\prime}$, where $\mathrm{Ind}$ denotes unnormalized induction. We remark that in this case, we have equalities in the Grothendieck group of admissible representations:
\[\begin{aligned}
\pi=\mathrm{Ind}_{P_m(\mathbb{Q}_p)}^{\GL_n(\mathbb{Q}_p)} \pi^{\prime}\otimes \pi^{\prime\prime}&=\text{n-Ind}_{P_m(\mathbb{Q}_p)}^{\GL_n(\mathbb{Q}_p)} \pi^{\prime}[\tfrac{m-n}2]\otimes \pi^{\prime\prime}[\tfrac m2] \\
&=\mathrm{Ind}_{P_m(\mathbb{Q}_p)}^{\GL_n(\mathbb{Q}_p)} \pi^{\prime\prime}[m]\otimes \pi^{\prime}[m-n]\ ,
\end{aligned}\]
where $[x]$ means twisting by $|\det|_p^x$ and $\text{n-Ind}$ denotes normalized induction. In particular,
\[
\mathrm{tr}^{\mathrm{ss}}(\Phi_p^r | \sigma_{\pi}) = p^{\frac{m-n}2} \mathrm{tr}^{\mathrm{ss}}(\Phi_p^r | \sigma_{\pi^{\prime}}) + p^{\frac m2} \mathrm{tr}^{\mathrm{ss}}(\Phi_p^r | \sigma_{\pi^{\prime\prime}})\ .
\]
Further, assume by induction that the equation \eqref{EqRepr} holds true for $\pi^{\prime}$ and $\pi^{\prime\prime}$. Note that by the restriction induction formula of Bernstein-Zelevinsky, \cite{BernsteinZelevinsky}, Lemma 2.12, we have
\[
\pi_{N_k}=\mathrm{Ind}_{(P_m\cap \GL_k\times \GL_{n-k})(\mathbb{Q}_p)}^{\GL_k\times \GL_{n-k}(\mathbb{Q}_p)} \pi^{\prime}_{N_k}\otimes \pi^{\prime\prime} +  \mathrm{Ind}_{(P_{n-m}\cap \GL_k\times \GL_{n-k})(\mathbb{Q}_p)}^{\GL_k\times \GL_{n-k}(\mathbb{Q}_p)} \pi^{\prime\prime}_{N_k}[m]\otimes \pi^{\prime}[m-n] + R\ ,
\]
where the rest $R$ is a sum of representations of the form $\pi_k\otimes \pi_{n-k}$ with $\pi_k$ properly induced. Let
\[
\pi^{\prime}_{N_k} = \sum_{i=1}^{t_{\pi^{\prime},k}} \pi^{\prime 1}_{N_k,i}\otimes \pi^{\prime 2}_{N_k,i}\ .
\]
By induction, we have an equality of virtual representations of $\GL_m(\mathbb{Z}_p)$:
\[\begin{aligned}
&\ p^{\frac{m-1}2 r}\mathrm{tr}^{\mathrm{ss}}(\Phi_p^r | \sigma_{\pi^{\prime}}) \pi^{\prime}\\
=&\ \sum_{k=1}^m \sum_{i=1}^{t_{\pi^{\prime},k}} p^{(m-k)r} \tr( (\chi_{B_k},\sigma) | \Pi^{\prime 1}_{N_k,i} ) \mathrm{Ind}_{(P_k\cap \GL_m)(\mathbb{Z}_p)}^{\GL_m(\mathbb{Z}_p)} I_k^0\otimes \pi^{\prime 2}_{N_k,i}\ .
\end{aligned}\]
Hence, defining $P_{k,m}$ to be the parabolic with breakpoints $k\leq m$,
\[\begin{aligned}
&\ p^{\frac{2n-m-1}2 r}\mathrm{tr}^{\mathrm{ss}}(\Phi_p^r | \sigma_{\pi^{\prime}}) \pi \\
=&\ \sum_{k=1}^m \sum_{i=1}^{t_{\pi^{\prime},k}} p^{(n-k)r} \tr( (\chi_{B_k},\sigma) | \Pi^{\prime 1}_{N_k,i} ) \mathrm{Ind}_{P_{k,m}(\mathbb{Z}_p)}^{\GL_n(\mathbb{Z}_p)} I_k^0\otimes \pi^{\prime 2}_{N_k,i}\otimes \pi^{\prime\prime} \\
=&\ \sum_{k=1}^m \sum_{i=1}^{t_{\pi^{\prime},k}} p^{(n-k)r} \tr( (\chi_{B_k},\sigma) | \Pi^{1}_{N_k,i} ) \mathrm{Ind}_{P_k(\mathbb{Z}_p)}^{\GL_n(\mathbb{Z}_p)} I_k^0\otimes \pi^{2}_{N_k,i}\ ,
\end{aligned}\]
as virtual representations of $\GL_n(\mathbb{Z}_p)$, where $\pi^1_{N_k,i}=\pi^{\prime 1}_{N_k,i}$ and $\pi^2_{N_k,i}=\mathrm{Ind} \pi^{\prime 2}_{N_k,i}\otimes \pi^{\prime\prime}$. This definition is useful, since
\[
\mathrm{Ind}_{(P_m\cap \GL_k\times \GL_{n-k})(\mathbb{Q}_p)}^{\GL_k\times \GL_{n-k}(\mathbb{Q}_p)} \pi^{\prime}_{N_k}\otimes \pi^{\prime\prime} = \sum_{i=1}^{t_{\pi^{\prime},k}} \pi^1_{N_k,i}\otimes \pi^2_{N_k,i}
\]
expresses the first part of $\pi_{N_k}$ for $k\leq m$. Note that for $k>m$,
\[
\mathrm{Ind}_{(P_m\cap \GL_k\times \GL_{n-k})(\mathbb{Q}_p)}^{\GL_k\times \GL_{n-k}(\mathbb{Q}_p)} \pi^{\prime}_{N_k}\otimes \pi^{\prime\prime}
\]
is also of the form $\pi_k\otimes \pi_{n-k}$ with $\pi_k$ properly induced. Repeating this with $\pi^{\prime}$ and $\pi^{\prime\prime}$ exchanged, we are left to show that any representation of the form $\pi_k\otimes \pi_{n-k}$ in $\pi_{N_k}$ with $\pi_k$ properly induced contributes trivially to \eqref{EqRepr}. This follows from the fact that if $\pi_k$ is properly induced, then $\tr((\chi_{B_k},\sigma)|\Pi_k)=0$, as $\chi_{B_k}$ is supported in elements whose norm is elliptic, and the character of $\pi_k$ vanishes on elliptic elements.
\end{proof}

Further, the equation \eqref{EqRepr} is trivial for representations which are subquotients of the parabolic induction of a supercuspidal representation $\pi_1\otimes \cdots \otimes \pi_t$ with no $\pi_i$ being an unramified character, as both sides vanish. Hence we are left to check it for an unramified twist of the Steinberg representation, or equivalently by Lemma \ref{InducedOK} for an unramified character $\chi\circ \det$. The equation \eqref{EqRepr} reduces to
\[
(1+p^r+\ldots+p^{(n-1)r})\chi(p^r) (\chi\circ \det) = \sum_{k=1}^n p^{(n-k)r} \chi(p^r) \mathrm{Ind}_{P_k(\mathbb{Z}_p)}^{\GL_n(\mathbb{Z}_p)} I_k^0\otimes (\chi\circ \det)\ .
\]
We see that we may assume $\chi=1$. Then this is a trivial consequence of the definition of $I_k^0$ and the long exact sequence in Lemma \ref{CombSteinberg}.
\end{proof}

\section{Comparison with the Arthur-Selberg trace formula}

We now conclude the proof of Theorem D, i.e. the determination of the semisimple local factor of the Shimura varieties $\mathrm{Sh}_K$ at the place $\mathfrak{p}$ in terms of $L$-functions of automorphic forms, cf. the introduction for the precise statement.

The main ingredients are the work of Kottwitz on the number of points of the Shimura variety over finite fields, cf. \cite{KottwitzPoints}, with the refinements in the case at hand in \cite{KottwitzLambdaAdic}, and the Arthur trace formula, which is very simple in our case, because our Shimura varieties are proper. Let
\[
H^{\ast}=\sum (-1)^i H^i_{\mathrm{et}}(\mathrm{Sh}_K\otimes_{k} \overline{\mathbb{Q}}_p,\mathbb{Q}_{\ell})
\]
in the Grothendieck group of representations of $\mathrm{Gal}(\overline{\mathbb{Q}}_p/\mathbb{Q}_p)$. Write $f^p = e_{K^p}$ and $\tilde{h}_p = e_{K_p}$. We know that $K_p$ has the form
\[
K_p=K_p^0\times \mathbb{Z}_p^{\times}\subset \GL_n(\mathbb{Q}_p)\times \mathbb{Q}_p^{\times}\cong \mathbf{G}(\mathbb{Q}_p)\ .
\]
We may assume that $K_p^0\subset \GL_n(\mathbb{Z}_p)$. Let $h_p=e_{K_p^0}$, so that $h_p\in C_c^{\infty}(\GL_n(\mathbb{Z}_p))$. Note that $h_p=h_p^{\vee}$, where we recall that by definition $h_p^{\vee}(g)=h_p((g^{-1})^t)$.

\begin{lem}\label{ReduceToGLn} Let $f_{p,h_p}$ be the function defined by
\[\begin{aligned}
f_{p,h_p}: \mathbf{G}(\mathbb{Q}_p)\cong \GL_n(\mathbb{Q}_p)\times \mathbb{Q}_p^{\times}&\longrightarrow \mathbb{C} \\
(g,x)&\longmapsto \left\{\begin{array}{ll} (f_{n,p}\ast h_p)((g^{-1})^t) & v_p(x)=-r \\ 0 & \mathrm{else}\ .\end{array}\right .
\end{aligned}\]
Then for any irreducible smooth representation $\pi_p$ of $\mathbf{G}(\mathbb{Q}_p)$,
\[
p^{\frac{n-1}2 r}\tr(\tilde{h}_p|\pi_p)\mathrm{tr}^{\mathrm{ss}}(\Phi_p^r | r\circ \sigma_{\pi_p}) = \tr(f_{p,h_p}|\pi_p)\ .
\]
\end{lem}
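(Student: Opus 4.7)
The plan is to reduce everything to the $\GL_n$-factor and invoke Lemma~\ref{FunctionExistsGLn}. Write $\pi_p=\pi_p^0\otimes\chi$, where $\pi_p^0$ is an irreducible smooth representation of $\GL_n(\mathbb{Q}_p)$ and $\chi$ is a smooth character of $\mathbb{Q}_p^\times$, using the isomorphism $\mathbf{G}(\mathbb{Q}_p)\cong \GL_n(\mathbb{Q}_p)\times \mathbb{Q}_p^\times$. Observe that both $\tilde h_p=e_{K_p^0}\otimes e_{\mathbb{Z}_p^\times}$ and $f_{p,h_p}$ factor through this product, so $\tr(\tilde h_p|\pi_p)$ and $\tr(f_{p,h_p}|\pi_p)$ each split as a product of a $\GL_n$-factor and a $\mathbb{Q}_p^\times$-factor.

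For the $\mathbb{Q}_p^\times$-factor, $\int_{\mathbb{Z}_p^\times}\chi=\int_{p^{-r}\mathbb{Z}_p^\times}\chi=0$ unless $\chi$ is unramified; hence one may as well assume $\chi$ is unramified, in which case the first factor is $1$ and the second is $\chi(p^{-r})$. For the $\GL_n$-factor of $\tr(f_{p,h_p}|\pi_p)$, the relevant integrand is $(f_{n,p}\ast h_p)((g^{-1})^t)$. A change of variables by the anti-involution $\theta(g)=(g^{-1})^t$, which is measure-preserving on the unimodular group $\GL_n(\mathbb{Q}_p)$, converts this to
\[
\tr(f_{n,p}\ast h_p\,|\,\pi_p^0\circ\theta)=\tr(f_{n,p}\ast h_p\,|\,(\pi_p^0)^\vee),
\]
using the standard fact that for $\GL_n$ the contragredient satisfies $\pi^\vee\cong \pi\circ\theta$. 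Lemma~\ref{FunctionExistsGLn} then produces the scalar $p^{(n-1)r/2}\,\mathrm{tr}^{\mathrm{ss}}(\Phi_p^r|\sigma_{(\pi_p^0)^\vee})$, and since $h_p^\vee=h_p$, the same change of variables gives $\tr(h_p|(\pi_p^0)^\vee)=\tr(h_p|\pi_p^0)$.

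It remains to match the trace on the $L$-group side. The representation $r$ is given by $(g,x,\sigma)\mapsto (g^{-1})^t x^{-1}$, so as Weil-group representations,
\[
r\circ\sigma_{\pi_p}\;=\;\sigma_{\pi_p^0}^\vee\otimes\sigma_\chi^{-1},
\]
using that $(g^{-1})^t$ is exactly the matrix of $\Phi_p^r$ on the dual representation. Since $\chi$ is unramified, $\sigma_\chi^{-1}$ is an unramified character of the Weil group, and the inertia-fixed subspace of the tensor product is the inertia-fixed subspace of $\sigma_{\pi_p^0}^\vee$ twisted by this character. Hence
\[
\mathrm{tr}^{\mathrm{ss}}(\Phi_p^r|r\circ\sigma_{\pi_p})=\chi(p^{-r})\,\mathrm{tr}^{\mathrm{ss}}(\Phi_p^r|\sigma_{(\pi_p^0)^\vee}),
\]
where we use that local Langlands for $\GL_n$ is compatible with duals. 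Assembling the four factors on each side, the desired equality is visible. The only delicate bookkeeping is the identification $\pi^\vee\cong \pi\circ\theta$ for $\GL_n$ together with the interaction of the transpose in $r$ with the transpose in the definition of $f_{p,h_p}$; once these match, the statement is essentially a scalar comparison, and no serious obstacle arises beyond careful tracking of the unramified twist and the power $p^{(n-1)r/2}$ furnished by Lemma~\ref{FunctionExistsGLn}.
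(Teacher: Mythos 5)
Your proof is correct and follows essentially the same route as the paper's: decompose $\pi_p=\pi_p^0\otimes\chi$, factor the trace of $f_{p,h_p}$ through the product and use the transpose-inverse to produce the contragredient, invoke Lemma~\ref{FunctionExistsGLn}, and identify $r\circ\sigma_{\pi_p}$ with $\sigma_{\pi_p^0}^\vee\otimes\sigma_\chi^{-1}$. The paper's three-line computation leaves the change of variables via $\theta(g)=(g^{-1})^t$, the identification $\pi^\vee\cong\pi\circ\theta$, and the compatibility of local Langlands with contragredients implicit, all of which you spell out correctly.
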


\begin{rem} We apologize for the two different uses of $r$ in this paper.
\end{rem}

\begin{proof} Let $e_{p^{-r}\mathbb{Z}_p^{\times}}$ be the characteristic function of $p^{-r}\mathbb{Z}_p^{\times}$ divided by its volume.  Since $\pi_p$ is an irreducible smooth representation of $\GL_n(\mathbb{Q}_p)\times \mathbb{Q}_p^{\times}$, it can be written as $\pi_p=\pi_p^0\otimes \chi_{\pi_p}$ for some irreducible smooth representation $\pi_p^0$ of $\GL_n(\mathbb{Q}_p)$ and a character $\chi_{\pi_p}$ of $\mathbb{Q}_p^{\times}$. We compute
\[\begin{aligned}
\tr(f_{p,h_p}|\pi_p)&= \tr(f_{n,p}\ast h_p|\pi_p^{0\vee}) \tr(e_{p^{-r}\mathbb{Z}_p^{\times}}|\chi_{\pi_p}) \\
&=p^{\frac{n-1}2 r}\mathrm{tr}^{\mathrm{ss}}(\Phi_p^r | \sigma_{\pi_p^0}^{\vee}) \tr(h_p|\pi_p^0) \tr(e_{p^{-r}\mathbb{Z}_p^{\times}}|\chi_{\pi_p}) \\
&= p^{\frac{n-1}2 r}\tr(\tilde{h}_p|\pi_p)\mathrm{tr}^{\mathrm{ss}}(\Phi_p^r | r\circ \sigma_{\pi_p})\ .
\end{aligned}\]
\end{proof}

The equation
\[
\zeta_{\mathfrak{p}}^{\mathrm{ss}}(\mathrm{Sh}_K,s) = \prod_{\pi_f} L^{\mathrm{ss}}(s-\tfrac{n-1}{2},\pi_p,r)^{a(\pi_f)\mathrm{dim} \pi_f^K}
\]
we want to prove reduces by standard methods to showing that for all $r\geq 1$ the equation
\[
\mathrm{tr}^{\mathrm{ss}}(\Phi_{p^r}|H^{\ast}) = \sum_{\pi_f=\pi_f^p\otimes \pi_p} p^{\frac{n-1}2 r}a(\pi_f)\tr(f^p|\pi_f^p)\tr(\tilde{h}_p|\pi_p) \mathrm{tr}^{\mathrm{ss}}(\Phi_p^r | r\circ \sigma_{\pi_p})
\]
holds true: Indeed, take the logarithms of both sides and use the Lefschetz trace formula for the left-hand side. Using Lemma \ref{ReduceToGLn}, this is equivalent to
\[
\mathrm{tr}^{\mathrm{ss}}(\Phi_{p^r}|H^{\ast}) = \sum_{\pi_f=\pi_f^p\otimes \pi_p} a(\pi_f)\tr(f^p|\pi_f^p) \tr(f_{p,h_p}|\pi_p)\ .
\]

This equation is proved in exactly the same way as the expression \cite[(5.4)]{KottwitzLambdaAdic} for the corresponding trace on a Galois representation. We just note the necessary changes in the argument.

First, a modification has to be done in Section 16 of \cite{KottwitzPoints}, where the number of points within one $\mathbb{F}_{p^r}$-isogeny class is computed. The function $\phi_r$ occuring there counts the number of lattices $\Lambda$ that give rise to $\mathbb{F}_{p^r}$-points of the moduli problem. More concretely, the set of such lattices is in bijection to
\[
Y_p=\{x\in \mathbf{G}(\mathbb{Q}_{p^r})/\mathbf{G}(\mathbb{Z}_{p^r})\mid x^{-1}\delta x^{\sigma}\in \mathbf{G}(\mathbb{Z}_{p^r})\mu(p^{-1})\mathbf{G}(\mathbb{Z}_{p^r})\}\ ,
\]
cf. \cite{KottwitzPoints}, p.432, noting that in our case $\mu(p^{-1})$ is defined over $\mathbb{Q}_p$, hence $\sigma$ acts trivially. But note that then $x^{-1}\delta x^{\sigma}$ up to $\mathbf{G}(\mathbb{Z}_{p^r})$-$\sigma$-conjugation is exactly the refined $\delta$ of the corresponding point of $\mathcal{M}(\mathbb{F}_{p^r})$ as established in Lemma \ref{CompDelta}. Hence, when computing, using Theorem \ref{CalcSemisimpleTrace},
\[\begin{aligned}
\mathrm{tr}^{\mathrm{ss}}(\Phi_{p^r}|H^{\ast}) &= \sum_{x\in \mathcal{M}(\mathbb{F}_{p^r})} \mathrm{tr}^{\mathrm{ss}}(\Phi_{p^r}\times h_p|(R\psi\mathcal{F}_{\infty})_{\overline{x}})\\
&= \sum_{x\in \mathcal{M}(\mathbb{F}_{p^r})} \phi_{h_p}(\delta_{x,0})\ ,
\end{aligned}\]
we have to weight the corresponding point with the factor $\phi_{h_p}(\delta_{x,0})$. As the second component of $\delta_x$ always has valuation $-1$, this shows that we get the correct result if we replace $\phi_r$ with $\phi_{h_p}^{\vee}\times e_{p^{-1}\mathbb{Z}_{p^r}^{\times}}$ in \cite{KottwitzPoints}, where $\phi_{h_p}^{\vee}(g) = \phi_{h_p}((g^{-1})^t)$.

We simply record the intermediate result.

\begin{prop} The semisimple Lefschetz number $\mathrm{tr}^{\mathrm{ss}}(\Phi_p^r | H^{\ast})$ equals
\[
\sum_{\gamma_0} \sum_{\gamma,\delta} c(\gamma_0;\gamma,\delta) O_{\gamma}(f^p)TO_{\delta\sigma}(\phi_{h_p}^{\vee}\times e_{p^{-1}\mathbb{Z}_{p^r}^{\times}})\ ,
\]
in the notation of \cite{KottwitzPoints}, p.442. In particular, $c(\gamma_0;\gamma,\delta)$ is a certain volume factor.
\end{prop}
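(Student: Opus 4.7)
The plan is to combine the Lefschetz trace formula with Kottwitz's counting of points within an isogeny class, but with every lattice point weighted by the local contribution from nearby cycles computed in Theorem~\ref{CalcSemisimpleTrace}.

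First I would apply the semisimple Lefschetz trace formula to write
\[
\mathrm{tr}^{\mathrm{ss}}(\Phi_{p^r}|H^{\ast}) = \sum_{x\in \mathcal{M}(\mathbb{F}_{p^r})} \mathrm{tr}^{\mathrm{ss}}\bigl(\Phi_{p^r}\times h_p \bigm| (R\psi\mathcal{F}_{\infty})_{\overline{x}}\bigr),
\]
using that $\tilde{h}_p = e_{K_p}$ acts via the pushforward along the covering $\pi_m$ and that $f^p=e_{K^p}$ has been absorbed into the level structure at hand (cohomology of $\mathrm{Sh}_K$ is cohomology of $\mathcal{M}$ with coefficients $\mathcal{F}_\infty^{K_p}$). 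Then Theorem~\ref{CalcSemisimpleTrace} lets me replace the local term at $\overline{x}$ by $\phi_{h_p}(\delta_{x,0})$, where $\delta_{x,0}$ is the refined element of $\GL_n(\mathbb{Q}_{p^r})\cap M_n(\mathbb{Z}_{p^r})$ attached to $x$ by Lemma~\ref{CompDelta}.

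Next I would regroup the sum over $x\in\mathcal{M}(\mathbb{F}_{p^r})$ by $\mathbb{F}_{p^r}$-isogeny class, following Section~16 of \cite{KottwitzPoints}. Each isogeny class is indexed by a Kottwitz triple $(\gamma_0;\gamma,\delta)$, and within a fixed class the points are parametrized by a pair $(\Lambda, \eta^p)$, where $\eta^p$ ranges over a coset space producing the orbital integral $O_\gamma(f^p)$ (times the usual volume factor $c(\gamma_0;\gamma,\delta)$), and $\Lambda$ ranges over the lattices of the set
\[
Y_p=\{x\in \mathbf{G}(\mathbb{Q}_{p^r})/\mathbf{G}(\mathbb{Z}_{p^r})\mid x^{-1}\delta x^{\sigma}\in \mathbf{G}(\mathbb{Z}_{p^r})\mu(p^{-1})\mathbf{G}(\mathbb{Z}_{p^r})\}.
\]
In Kottwitz's unweighted count, $|Y_p|$ equals a twisted orbital integral of the characteristic function $\phi_r$ of $\GL_n(\mathbb{Z}_{p^r})\mu(p^{-1})\GL_n(\mathbb{Z}_{p^r})$ on the first factor (times $e_{p^{-1}\mathbb{Z}_{p^r}^\times}$ on the second). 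The crucial observation, already recorded in the excerpt, is that $x^{-1}\delta x^{\sigma}$ taken modulo $\mathbf{G}(\mathbb{Z}_{p^r})$-$\sigma$-conjugation is precisely the refined $\delta$ of the point $x\in\mathcal{M}(\mathbb{F}_{p^r})$ it parametrises. Thus weighting each lattice by $\phi_{h_p}(\delta_{x,0})$ converts Kottwitz's count of $|Y_p|$ into a twisted orbital integral of $\phi_{h_p}^{\vee}\times e_{p^{-1}\mathbb{Z}_{p^r}^{\times}}$.

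The one subtle point, which I expect to be the main obstacle, is justifying the appearance of $\phi_{h_p}^{\vee}$ rather than $\phi_{h_p}$, and the second component factor $e_{p^{-1}\mathbb{Z}_{p^r}^{\times}}$. Both come from Lemma~\ref{CompDelta}: the contravariant Dieudonné module identification sends Kottwitz's Frobenius-indexing element $\delta$ to $((\delta_0^{-1})^t, p^{-1})$, so the $\delta_0$ whose $\sigma$-conjugacy class governs the local $\phi_{h_p}$-contribution is the transpose-inverse of the first component of the element that Kottwitz uses, and the second component automatically has valuation $-1$. Carefully tracking this normalization through Kottwitz's bookkeeping (and verifying that the volume factor $c(\gamma_0;\gamma,\delta)$ is unchanged, as it depends only on cohomological data and the $\sigma$-conjugacy class, not on the specific representative or its transpose) yields the stated formula, with the outer sum over Kottwitz triples and the orbital integral at $p$ replaced by the twisted orbital integral of $\phi_{h_p}^{\vee}\times e_{p^{-1}\mathbb{Z}_{p^r}^{\times}}$.
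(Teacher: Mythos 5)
Your proposal is correct and follows essentially the same route as the paper: apply the semisimple Lefschetz trace formula to reduce to a sum of local terms, invoke Theorem~\ref{CalcSemisimpleTrace} to replace each local term by $\phi_{h_p}(\delta_{x,0})$, and then rerun Kottwitz's point count (\cite{KottwitzPoints}, Section~16) with each lattice in $Y_p$ weighted by this factor, using Lemma~\ref{CompDelta} to match the refined $\delta_0$ with $x^{-1}\delta x^{\sigma}$ and to account for the transpose-inverse and the valuation of the $\mathbb{G}_m$-component. The paper records exactly this argument just before the Proposition statement, and your added remarks about the normalization tracking are the same points the paper flags.
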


With this modification, the argument of \cite{KottwitzLambdaAdic} works, if instead of the base-change fundamental lemma at the end of p.662, one uses Theorem \ref{MainTheoremGLn}. Note that the extra $\mathbb{G}_m$-component of $\delta$ causes no trouble. This finishes the proof of Theorem D.

As a last point, we give a reformulation of the last proposition. Let $\phi_{n,p}$ be the function of the Bernstein center of $\GL_n(\mathbb{Q}_{p^r})$ that acts on any irreducible smooth representation $\Pi$ of $\GL_n(\mathbb{Q}_{p^r})$ through the scalar
\[
p^{\frac{n-1}2 r} \mathrm{tr}^{\mathrm{ss}}(\Phi_{p^r} | \sigma_{\Pi})\ .
\]
Its existence is proved as usual and it is readily checked that for any tempered irreducible representation $\pi$ of $\GL_n(\mathbb{Q}_p)$ with base-change lift $\Pi$, the scalar through which $f_{n,p}$ acts on $\pi$ is the same as the scalar through which $\phi_{n,p}$ acts on $\Pi$. Indeed, it is enough to do this for representations unitarily induced from supercuspidal (which are preserved under base change), and use that representations cannot become unramified after unramified base change if they were not from the start, cf. proof of Lemma 10.2 in \cite{Scholze}.

Fix a group scheme $\mathcal{G}=\mathcal{G}_{M,I}$ over $\mathbb{Z}_p$ with generic fibre $\GL_n$ as in Section 2 and assume $K_p^0=\mathcal{G}(\mathbb{Z}_p)$. Noting that the function
\[
\phi_{\mathcal{G},r} = (\phi_{n,p}\ast e_{\mathcal{G}(\mathbb{Z}_{p^r})})^{\vee}\times e_{p^{-1}\mathbb{Z}_{p^r}^{\times}}
\]
lies in the center of the Hecke algebra $\mathcal{H}(\mathbf{G}(\mathbb{Q}_{p^r}),\mathcal{G}(\mathbb{Z}_{p^r})\times \mathbb{Z}_{p^r}^{\times})$, the following theorem proves a conjecture of Haines and Kottwitz in the case at hand.

\begin{thm}\label{HainesKottwitz} The semisimple Lefschetz number $\mathrm{tr}^{\mathrm{ss}}(\Phi_p^r | H^{\ast})$ equals
\[
\sum_{\gamma_0} \sum_{\gamma,\delta} c(\gamma_0;\gamma,\delta) O_{\gamma}(f^p)TO_{\delta\sigma}(\phi_{\mathcal{G},r})\ .
\]
\end{thm}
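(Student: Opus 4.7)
The plan is to deduce Theorem~\ref{HainesKottwitz} from the immediately preceding proposition by showing that the two test functions at $p$,
\[
\phi_{h_p}^{\vee}\times e_{p^{-1}\mathbb{Z}_{p^r}^{\times}} \quad\text{and}\quad \phi_{\mathcal{G},r} = (\phi_{n,p}\ast e_{\mathcal{G}(\mathbb{Z}_{p^r})})^{\vee}\times e_{p^{-1}\mathbb{Z}_{p^r}^{\times}},
\]
have the same twisted orbital integrals on $\mathbf{G}(\mathbb{Q}_{p^r})$. Since the second factor is identical in both and the involution $g\mapsto (g^{-1})^t$ is an automorphism of $\GL_n$, this reduces to comparing the twisted orbital integrals of $\phi_{h_p}$ and $\phi_{n,p}\ast e_{\mathcal{G}(\mathbb{Z}_{p^r})}$ on $\GL_n(\mathbb{Q}_{p^r})$.

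First I would apply Theorem~\ref{MainTheoremGLn} with $h=h_p=e_{\mathcal{G}(\mathbb{Z}_p)}$: it says that $\phi_{h_p}$ and $f_{n,p}\ast h_p$ have matching (twisted) orbital integrals, so it remains to transport the comparison further via base change by showing that $f_{n,p}\ast h_p$ on $\GL_n(\mathbb{Q}_p)$ and $\phi_{n,p}\ast e_{\mathcal{G}(\mathbb{Z}_{p^r})}$ on $\GL_n(\mathbb{Q}_{p^r})$ have matching twisted orbital integrals. For this I would invoke Theorem~\ref{BaseChangeIdentity}. Its hypotheses need to be verified: both functions lie in the relevant centers $\mathcal{Z}(\GL_n(\mathbb{Q}_p),\mathcal{G}(\mathbb{Z}_p))$ and $\mathcal{Z}(\GL_n(\mathbb{Q}_{p^r}),\mathcal{G}(\mathbb{Z}_{p^r}))$ (immediate, because $f_{n,p}$ and $\phi_{n,p}$ are elements of the respective Bernstein centers and we convolve with the idempotents $e_{\mathcal{G}(\mathbb{Z}_p)}$, $e_{\mathcal{G}(\mathbb{Z}_{p^r})}$); and the scalars by which they act on invariants have to agree for all tempered $\pi$ with base-change lift $\Pi$.

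The main technical step is verifying this last scalar matching. For tempered irreducible $\pi$ with $\pi^{\mathcal{G}(\mathbb{Z}_p)}\neq 0$, the scalar for $f_{n,p}\ast h_p$ on $\pi^{\mathcal{G}(\mathbb{Z}_p)}$ is exactly $p^{\frac{n-1}{2}r}\mathrm{tr}^{\mathrm{ss}}(\Phi_p^r|\sigma_{\pi})$ by the defining property of $f_{n,p}$ from Lemma~\ref{FunctionExistsGLn}, since $h_p$ acts as the identity on $\pi^{\mathcal{G}(\mathbb{Z}_p)}$. The analogous scalar for $\phi_{n,p}\ast e_{\mathcal{G}(\mathbb{Z}_{p^r})}$ on $\Pi^{\mathcal{G}(\mathbb{Z}_{p^r})}$ is $p^{\frac{n-1}{2}r}\mathrm{tr}^{\mathrm{ss}}(\Phi_{p^r}|\sigma_{\Pi})$. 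As the author indicates in the closing remarks, one reduces to representations unitarily induced from supercuspidal (base change preserves this class) and uses the fact that an unramified character does not arise from base change of a ramified character. This identifies the L-parameters $\sigma_{\Pi}$ with the restriction $\sigma_{\pi}|_{W_{\mathbb{Q}_{p^r}}}$, giving $\mathrm{tr}^{\mathrm{ss}}(\Phi_{p^r}|\sigma_{\Pi}) = \mathrm{tr}^{\mathrm{ss}}(\Phi_p^r|\sigma_{\pi})$ since $\Phi_{p^r}\in W_{\mathbb{Q}_{p^r}}$ maps to $\Phi_p^r\in W_{\mathbb{Q}_p}$. (The required implication $\pi^{\mathcal{G}(\mathbb{Z}_p)}\neq 0 \Rightarrow \Pi^{\mathcal{G}(\mathbb{Z}_{p^r})}\neq 0$ is part of the conclusion of Theorem~\ref{BaseChangeIdentity} applied with $f,\phi$ the characteristic functions of $\mathcal{G}(\mathbb{Z}_p)$, $\mathcal{G}(\mathbb{Z}_{p^r})$.)

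Combining, Theorem~\ref{BaseChangeIdentity} yields matching twisted orbital integrals between $f_{n,p}\ast h_p$ and $\phi_{n,p}\ast e_{\mathcal{G}(\mathbb{Z}_{p^r})}$; together with Theorem~\ref{MainTheoremGLn} this gives the desired matching between $\phi_{h_p}$ and $\phi_{n,p}\ast e_{\mathcal{G}(\mathbb{Z}_{p^r})}$, hence (after applying $\vee$ and multiplying by $e_{p^{-1}\mathbb{Z}_{p^r}^{\times}}$) between $\phi_{h_p}^{\vee}\times e_{p^{-1}\mathbb{Z}_{p^r}^{\times}}$ and $\phi_{\mathcal{G},r}$. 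Substituting into the formula for the semisimple Lefschetz number supplied by the preceding proposition finishes the proof. The main obstacle throughout is organizing the several matching statements so that the normalizations (the $\vee$-involution, the $\mathbb{G}_m$-factor, and the $p^{\frac{n-1}{2}r}$ twist) all line up; once they do, Theorems~B and~C do the real work.
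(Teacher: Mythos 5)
Your proof is correct and follows essentially the same route as the paper: both arguments deduce the theorem from the preceding proposition by showing that $\phi_{h_p}^{\vee}\times e_{p^{-1}\mathbb{Z}_{p^r}^{\times}}$ and $\phi_{\mathcal{G},r}$ have the same twisted orbital integrals, using Theorem~\ref{MainTheoremGLn} to match $\phi_{h_p}$ with $f_{n,p}\ast e_{\mathcal{G}(\mathbb{Z}_p)}$ and Theorem~\ref{BaseChangeIdentity} (whose scalar-matching hypothesis you verify exactly as in the paper's remark preceding the theorem) to match $f_{n,p}\ast e_{\mathcal{G}(\mathbb{Z}_p)}$ with $\phi_{n,p}\ast e_{\mathcal{G}(\mathbb{Z}_{p^r})}$, with both $\vee$ and the $\mathbb{G}_m$-factor handled as harmless normalizations.
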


\begin{proof} We have seen that this is correct if we replace $TO_{\delta\sigma}(\phi_{\mathcal{G},r})$ by $TO_{\delta\sigma}(\phi_{h_p}^{\vee}\times e_{p^{-1}\mathbb{Z}_{p^r}^{\times}})$. However, both functions have matching (twisted) orbital integrals with $(f_{n,p}\ast e_{\mathcal{G}(\mathbb{Z}_p)})^{\vee}\times e_{p^{-r}\mathbb{Z}_p^{\times}}$: For the first function, this is a consequence of Theorem \ref{BaseChangeIdentity}, and for the second function, it follows from Theorem \ref{MainTheoremGLn}. Therefore the twisted orbital integrals agree, as desired.
\end{proof}

\bibliographystyle{abbrv}
\bibliography{GLn}

\end{document}